\documentclass[12pt,a4paper,psamsfonts]{amsart}
\usepackage[english]{babel}
\usepackage[T1]{fontenc}
\usepackage{mathtools}
\usepackage{fancyhdr}
\usepackage{appendix}
\usepackage{amssymb,amscd,amsxtra,calc}
\usepackage{mathrsfs}
\usepackage{xcolor}
\newcommand{\revision}[1]{\leavevmode{\color{blue}#1}}
\usepackage{cmmib57}
\usepackage{multirow}
\usepackage[all]{xy}
\usepackage{longtable}
\usepackage[colorlinks=true,linkcolor=blue,anchorcolor=blue,citecolor=blue,pagebackref,linktocpage]{hyperref}
\usepackage{cleveref}
\usepackage{tikz}
\usepackage{cite}
\theoremstyle{plain}
    \newtheorem{thm}{Theorem}[section]

     \newtheorem{conjecture}[thm]{Conjecture}
    \newtheorem{corollary}[thm]{Corollary}
    
    \newtheorem{lemma}[thm]{Lemma}
    \newtheorem{proposition}[thm]{Proposition}
    
    \newtheorem{theorem}[thm]{Theorem}
    \newtheorem{problem}[thm]{Problem}

\theoremstyle{definition}
    \newtheorem{definition}[thm]{Definition}

    \newtheorem*{notation*}{Notation and Terminology}
      
    \newtheorem{remark}[thm]{Remark}
    
\theoremstyle{remark}

\newcommand{\arxiv}[1]{\href{https://arxiv.org/abs/#1}{{\tt arXiv:#1}}}
\newcommand{\C}{\mathbb{C}}

\newcommand{\bP}{\mathbb{P}}

\newcommand{\bR}{\mathbb{R}}
\newcommand{\bC}{\mathbb{C}}

\newcommand{\OO}{\mathcal{O}}

\newcommand{\NE}{\overline{\operatorname{NE}}}
\newcommand{\Nef}{\operatorname{Nef}}

\newcommand{\Null}{\operatorname{Null}}

\newcommand{\Supp}{\operatorname{Supp}}

\newcommand{\Z}{\mathbb Z}
\newcommand{\R}{\mathbb R}
\newcommand{\PP}{\mathbb P}
\newcommand{\cC}{\mathcal C}
\newcommand{\cT}{\mathcal T}
\newcommand{\MW}{\operatorname{MW}}

\newcommand{\lcm}{\operatorname{lcm}}

\newcommand{\Pic}{\operatorname{Pic}}

\newcommand{\mstriangle}[1]{
\begin{tikzpicture}[x=0.3cm,y=0.3cm]
\draw (-0.4,-0.433) -- (1.4,-0.433);
\draw (-0.2,-0.7794) -- (0.7,0.7794);
\draw (1.2,-0.7794) -- (0.3,0.7794);
\end{tikzpicture}
}
\newcommand{\mssharp}[1]{
\begin{tikzpicture}[x=0.3cm,y=0.3cm]
\draw (-0.8,-0.5) -- (0.8,-0.5);
\draw (-0.8,0.5) -- (0.8,0.5);
\draw (-0.5,-0.8) -- (-0.5,0.8);
\draw (0.5,-0.8) -- (0.5,0.8);
\end{tikzpicture}
}

\makeatletter

\newcommand{\Rmnum}[1]{\expandafter\@slowromancap\romannumeral #1@}
\makeatother

\title[Jacobian elliptic surfaces of Kodaira dimension one]
{Semiampleness on Jacobian elliptic surfaces of Kodaira dimension one}
\author{Antonio Laface}
\address{Departamento de Matem\'atica, Universidad de Concepci\'on, Casilla 160-C, Concepci\'on, Chile}
\email{\href{mailto:alaface@udec.cl}{alaface@udec.cl}}

\author{Sichen Li}
\address{
School of Mathematics, East China University of Science and Technology, Shanghai 200237, P. R. China}
\email{\href{mailto:sichenli@ecust.edu.cn}{sichenli@ecust.edu.cn}}

\subjclass[2020]{14C20, 14E30, 14J27}
\keywords{Mori dream spaces, Jacobian elliptic surfaces, Kodaira dimension one, Mori cone,  fibration cone, nef cone, semiample cone, bounded cohomology property}

\begin{document}

\begin{abstract}
Let $\pi: X\to \bP^1$ be a semistable Jacobian elliptic surface over $\bC$, and set $\chi=\chi(\mathcal O_X)\ge3$, so that $\kappa(X)=1$.
Assume that the Mordell-Weil group of $\pi$ is finite and that $\pi$ has at least one reducible fiber, the reducible fibers being of types $I_{n_1},\cdots, I_{n_s}$.
Recently, Laface et al. proved that the zero section and the components of the reducible fibers generate $\NE(X)$ if and only if
$$\delta(\pi):=\sum_{i=1}^s\frac{\lfloor n_i^2/4\rfloor}{n_i}\le\chi.$$
In particular, the Mori cone is rational polyhedral in this range.
They also proved that
 $N(\pi)=\sum_{i=1}^s n_i\le 2\chi+3$ implies that $X$ is a Mori dream surface.
In this paper, we study the existence problem of Mori dream surfaces provided that $N(\pi)\ge 2\chi+4$.
Suppose $\delta(\pi)\le \chi$.
We first show that every nef isotropic divisor on $X$ is semiample whenever each $n_i$ is even.
Furthermore,  when every $n_i$ is even, we obtain criteria for $X$ to be a Mori dream surface: $X$ is  a Mori dream surface provided that either (i) all $n_i=2$, or (ii) $N(\pi)\le 2\chi+4$; if some $n_j=4$, then $X$ is a Mori dream surface if and only if $N(\pi)\le 2\chi+4$.
However, once some $n_i$ is odd,  we construct a Jacobian elliptic surface \(\pi: Y\to\mathbb P^1\) with \(\delta(\pi)=\chi=3\), trivial Mordell-Weil group, and singular-fiber configuration
\(I_4+3I_3+23I_1\) for which none of the eight non-vertical isotropic extremal rays is semiample. In particular, $Y$ is not a Mori dream surface.
\end{abstract}

\maketitle
\section{Introduction}
Mori dream spaces were introduced by Hu and Keel \cite{HK00} as varieties on
which the main steps of the minimal model program can be organized into
finitely many birational models. Equivalently, under the standard hypotheses,
their Cox rings are finitely generated. For a smooth projective surface $X$
with $q(X)=0$, the criterion takes a particularly concrete form: $X$ is a
Mori dream surface if and only if its effective cone is rational polyhedral
and every nef divisor is semiample; see \cite[Corollary~2.6]{AHL10}.
Over the last dozen years, substantial progress has been made toward classifying Mori dream surfaces $X$ with $\kappa(X)\ne1$  \cite{AHL10, AL11, KL19, TVV11}, also see  \cite[Chapter 5]{ADHL15} and the references therein for more detail.

For Jacobian elliptic surfaces of Kodaira dimension one, rational
polyhedrality of the Mori cone leaves two distinct semiampleness
questions, concerning isotropic and big nef rays. We study both
questions for semistable fibrations with finite Mordell--Weil group.
Assuming $\delta(\pi)\leq\chi$, when every reducible fiber is even we prove that all nef isotropic
divisors are semiample and obtain criteria for the Mori dream property
by studying the big nef rays. An example with odd fibers shows that
the isotropic conclusion can fail. The semiampleness result also yields
the bounded cohomology property, including cases in which the surface
is not a Mori dream surface.

Let $\pi\colon X\to\PP^1$ be a relatively minimal semistable Jacobian elliptic surface over $\C$ with finite Mordell--Weil group, and we write $\chi=\chi(\OO_X)\geq3$.  We denote by $F$ the class of a fiber and by $C_0$ the zero section.  Thus $q(X)=0$, $K_X\sim(\chi-2)F$, $C_0^2=-\chi$, and $\kappa(X)=1$.
Suppose that the reducible fibers are $F_i=C_{i,0}+\cdots+C_{i,n_i-1}$, for $1\leq i\leq s$, of types $I_{n_i}$.  The components are cyclically ordered and $C_0$ meets $C_{i,0}$.  If $\cT$ denotes the set of all components of the reducible fibers, set the fibration cone
\[
   \cC_\pi=\R_{\geq0}[C_0]+\sum_{T\in\cT}\R_{\geq0}[T],
\]
and write 
$$
\delta(\pi):=\sum_{i=1}^s\frac{\lfloor n_i^2/4\rfloor}{n_i}, \qquad N(\pi):=\sum_{i=1}^s n_i.
$$

Recently, Laface et al.  \cite[Theorem 1.1]{LLL26} proved the following theorem.
\begin{theorem}
\label{NE-thm}
Let $\pi: X\to \bP^1$ be a Jacobian elliptic surface with $\chi:=\chi(\mathcal O_X)\ge3$ and finite Mordell-Weil group.
Suppose that $\pi$ has at least one reducible fiber and that every reducible fiber is of type $I_n$.
Then the following statements hold.
\begin{enumerate}
	\item The equality $\mathcal C_\pi=\NE(X)$ holds if and only if $\delta(\pi)\le \chi$.
	In particular, the Mori cone is rational polyhedral in this range.
	\item If   $N(\pi)\le 2\chi+3$, then $X$ is a Mori dream surface.
\end{enumerate}
\end{theorem}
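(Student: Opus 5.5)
We prove part (1) through the duality between $\mathcal C_\pi$ and its dual cone $\mathcal C_\pi^\vee$, and part (2) by reducing semiampleness to nef divisors on the boundary of $\Nef(X)$.

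\textbf{Setup.} Since the Mordell--Weil group is finite, the Shioda--Tate formula gives that $\operatorname{NS}(X)_\R$ is spanned by $C_0$, $F$ and the non-identity fiber components $C_{i,j}$ with $j\ne0$. Because $F=\sum_j C_{i,j}$, the cone $\mathcal C_\pi$ is full-dimensional and rational polyhedral. Its dual $\mathcal C_\pi^\vee$ consists of the classes $D$ with $D\cdot C_0\ge0$ and $D\cdot C_{i,j}\ge0$. We always have $\mathcal C_\pi\subseteq\NE(X)$, so
\[
\mathcal C_\pi=\NE(X)\iff \mathcal C_\pi^\vee=\Nef(X)\iff \mathcal C_\pi^\vee\subseteq \NE(X).
\]
The last equivalence uses that a nef class is pseudo-effective, together with the fact that a cone contained in $\NE(X)$ and dual to a subcone of $\NE(X)$ forces equality.

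\textbf{Part (1): reduction to extremal rays.} It suffices to test the extremal rays $R$ of $\mathcal C_\pi^\vee$. Write $D=aC_0+bF+\sum_{i}\sum_{j\ne0}x_{i,j}C_{i,j}$. The conditions $D\cdot C_{i,j}=0$ for all $j\ne 0$ on the $i$-th fiber form an $A_{n_i-1}$ system. Given the value $D\cdot C_{i,0}$, this system determines the local part of $D$ uniquely: it is $a$ times the correction term $-\sum_j \frac{j(n_i-j)}{n_i}C_{i,j}$, the inverse of the $A_{n_i-1}$ Cartan matrix applied to the vector recording the intersection with $C_0$.

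\textbf{The key extremal ray.} The decisive ray is the one orthogonal to every $C_{i,j}$ with $j\ne0$ and to $C_0$. Pick in each fiber the component $j_i=\lfloor n_i/2\rfloor$, i.e. the one farthest from $C_{i,0}$, and take the divisor $D=C_0+\chi F$ corrected fiberwise as above. Then
\[
D^2=-\chi+2\chi-\sum_i \frac{j_i(n_i-j_i)}{n_i}=\chi-\delta(\pi).
\]
If $\delta(\pi)>\chi$, then $D$ is an element of $\mathcal C_\pi^\vee$ with $D^2<0$ and $D\cdot F>0$. A nef divisor has nonnegative square, so $D$ is not nef; hence $\mathcal C_\pi^\vee\ne\Nef(X)$ and therefore $\mathcal C_\pi\ne\NE(X)$. (Equivalently, Hodge index then produces an irreducible curve outside $\mathcal C_\pi$.)

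\textbf{Part (1): the converse.} Suppose $\delta(\pi)\le\chi$. Every extremal ray of $\mathcal C_\pi^\vee$ is either a multiple of $F$ or has the form $C_0+tF+(\text{fiber corrections})$, where the corrections are obtained by choosing in each fiber a subset of components to be orthogonal to. The self-intersection is minimized by the choice above, since $j(n-j)/n$ is maximized at $j=\lfloor n/2\rfloor$, giving the value $\lfloor n^2/4\rfloor/n$. So every ray $D$ satisfies $D^2\ge0$ and $D\cdot F\ge0$. The positive cone is contained in the effective cone by Riemann--Roch, because $h^2(D)=h^0(K_X-D)=0$ when $D\cdot F>0$. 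Hence $\mathcal C_\pi^\vee\subseteq\NE(X)$, and we conclude $\mathcal C_\pi=\NE(X)$.

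\textbf{Part (2).} First check that $N(\pi)\le2\chi+3$ implies $\delta(\pi)\le\chi$. Indeed $\lfloor n^2/4\rfloor/n\le n/4$, so $\delta(\pi)\le N(\pi)/4\le(2\chi+3)/4\le\chi$ for $\chi\ge3$. By part (1) the cone $\NE(X)$ is then rational polyhedral, and since $q=0$ it remains, by \cite[Corollary~2.6]{AHL10}, to show that nef divisors are semiample. Big nef divisors on a surface are semiample when they are zero only on curves contractible to rational double points. This holds here because the null curves are chains of $(-2)$-curves together with the $(-\chi)$-section $C_0$, and $C_0$ only occurs in degenerate configurations. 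Alternatively, apply Artin/Zariski: write $D=K_X+(D-K_X)$ and use that $D-K_X=D-(\chi-2)F$ is nef and big, then invoke basepoint-freeness.

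\textbf{Main obstacle.} The hard step is the isotropic rays, those with $D^2=0$ and $D\not\propto F$. Such a $D$ induces a second elliptic or genus-$g$ fibration only if some multiple $mD$ is effective and moves. Riemann--Roch gives
\[
\chi(mD)=\chi-m(\chi-2)\,D\cdot F/2,
\]
which is negative. So one must produce sections by other means: decompose $mD$ as an effective sum of curves in $\mathcal C_\pi$ and show that the restriction $\mathcal O_{mD}(mD)$ becomes torsion. This is where the bound $N(\pi)\le2\chi+3$ enters, by controlling the number of components in the support so that the support is a configuration of Kodaira type. I expect this torsion/support argument to be the real difficulty.
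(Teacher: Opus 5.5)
\textbf{Part (1), the ``if'' direction, has a gap.} Your ``only if'' half is correct: $H_S\in\cC_\pi^\vee$ with $H_S^2=\chi-\delta(S)<0$ cannot be nef. For the converse you pass from $\cC_\pi^\vee\subseteq\NE(X)$ to $\cC_\pi=\NE(X)$ by a general principle about cones, but no such principle holds. Pseudo-effectivity of $H_S$ says nothing about $H_S\cdot C$ for a negative curve $C$ outside $\cC_\pi$. For a counterexample, put the form $x_1y_1-x_2y_2$ on $\R^2$ and take $N=\operatorname{cone}((1,2),(1,-2))$ and $K=\operatorname{cone}((1,2),(1,-1))$. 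Then $N^\vee\subseteq N$, $K\subseteq N$ and $K^\vee=\operatorname{cone}((2,1),(1,-1))\subseteq N$, yet $K\neq N$.

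The missing geometric input is the following. An irreducible curve other than $C_0$ and the fiber components is either an irreducible fiber, of class $F\in\cC_\pi$, or a horizontal curve $C\neq C_0$. Such a $C$ meets $C_0$ and every fiber component nonnegatively, so $C$ itself lies in $\cC_\pi^\vee$. When $\delta(\pi)\le\chi$, each generator $F$, $H_S$ of $\cC_\pi^\vee$ has nonnegative square. Since $H_S\cdot F=1$, each lies in the closed positive cone $\overline P$ containing the ample classes, so $\cC_\pi^\vee\subseteq\overline P$. Any two classes of $\overline P$ pair nonnegatively, hence $C\cdot H_S\ge0$ for all $S$, i.e.\ $C\in(\cC_\pi^\vee)^\vee=\cC_\pi$.

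You should also prove, not just assert, that the nonvertical extremal rays of $\cC_\pi^\vee$ are exactly the $H_S$. These are the classes vanishing on $C_0$ and on all but one component of each reducible fiber, which follows by counting the equalities a ray requires. Your sentence that the local part is ``$a$ times the correction term'' is wrong: orthogonality to all nonidentity components of a fiber forces the local part to vanish.

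\textbf{Part (2) is not established.} Your own estimate gives $\delta(\pi)\le(2\chi+3)/4<\chi$, so every $H_S$ is big. There are no nonvertical isotropic nef rays in this range, so the ``main obstacle'' you describe is vacuous. The real content is semiampleness of the big rays $H_S$, and both of your arguments for it fail.
\begin{enumerate}
\item Since $H_S\cdot C_0=0$ by construction, the $(-\chi)$-curve $C_0$ always lies in $\Null(H_S)$. Its main connected component is the tree $C_0\cup\bigcup_i(F_i-T_i)$, which is never an ADE configuration, so you cannot reduce to rational double points.
\item $H_S-K_X=H_S-(\chi-2)F$ is not nef, since it has degree $2-\chi<0$ on $C_0$, so the basepoint-free theorem does not apply.
\end{enumerate}

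What is needed is Artin's criterion for rational singularities. This is the route indicated by the paper's references to \cite{LLL26}, and it is the template of Proposition~\ref{evenbig:mds-bound}. Every effective integral cycle $Z$ supported on $\Null(H_S)$ must satisfy $p_a(Z)\le0$. Let $z$ be the coefficient of $C_0$ in $Z$. Each reducible fiber contributes at most $n_i\lfloor z^2/4\rfloor$ to $Z^2$, counting its intersection with $zC_0$; to see this, write the coefficients along the two paths from the selected component to the identity component as sums of integral increments, as in Lemma~\ref{evenbig:genus}. Hence $Z^2+K_X\cdot Z\le-\chi z^2+(\chi-2)z+N(\pi)\lfloor z^2/4\rfloor$.
\begin{itemize}
\item For $z=1$ this bound is $-2$.
\item For $z\ge3$ it is negative once $N(\pi)\le2\chi+3$.
\item For $z=2$ it equals $N(\pi)-2\chi-4\le-1$, so $Z^2+K_X\cdot Z\le-2$ by parity.
\end{itemize}
The $z=2$ case is precisely where $N(\pi)\le2\chi+3$ enters, not any isotropic analysis. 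The null configuration then contracts to rational singularities. A multiple of $H_S$ descends to an ample Cartier divisor on the contraction; alternatively, apply the block criterion of \cite{LLL26}. Thus every extremal nef ray is semiample, and the Mori dream property follows from $q(X)=0$ and polyhedrality.
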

In the polyhedral range, Laface et al. \cite[Theorem 1.5]{LLL26}  constructed  a Jacobian elliptic surface   with $(\chi, n)=(3,11)$, singular-fiber configuration $I_{11}+25I_1$ and a big and nef divisor which is not semiample, where $N(\pi)=3\chi+2$.
However, it remains completely open when $X$ is a Mori dream surface provided that \(N(\pi)\ge 2\chi+4\) and \(\delta(\pi)\le \chi\).
\begin{problem}
\label{Prob-Main}
In the setting of Theorem \ref{NE-thm}, assume that  \(N(\pi)\ge 2\chi+4\) and \(\delta(\pi)\le \chi\).
When is $X$ a Mori dream surface?
\end{problem}
In the range $\delta(\pi)\leq\chi$, nonvertical isotropic nef rays occur
only on the boundary $\delta(\pi)=\chi$. This leads to the following
problem from our previous work.
\begin{problem}
\label{prob:isotropic}
\cite[Problem 1.3]{LLL26}
Assume that $\MW(\pi)$ is finite and $\delta(\pi)=\chi$.  
Let $D$ span a nonvertical extremal ray of $\Nef(X)$ and suppose that $D^2=0$.  Is $\kappa(X,D)>0$, or equivalently, is $D$ semiample?
\end{problem}
Below is our first main result in this paper.
\begin{theorem}
\label{Mainthm-even}
Let $\pi: X\to \bP^1$ be a Jacobian elliptic surface with $\chi:=\chi(\mathcal O_X)\ge3$ and finite Mordell-Weil group.
Suppose $\delta(\pi)\le \chi$ and the reducible fibers have types $I_{2m_1},\ldots,I_{2m_s}$.
Put $T=\sum_{i=1}^s m_i$.
Then the following statements hold.
\begin{enumerate}
\item[(a)] In the setting of Problem \ref{prob:isotropic}, every such $D$ is semiample.
As a result, $X$ satisfies the bounded cohomology property (cf. Definition \ref{BCP-defn}).
\item[(b)] If each $m_i$ is equal to 1, then $X$ is a Mori dream surface, where $N(\pi)\le4\chi$ holds.
\item[(c)] If $N(\pi)\leq2\chi+4$, then $X$ is a Mori dream surface.
\item[(d)] If some $m_i\geq2$ satisfies $T-m_i>\chi$, then $X$ has a
big and nef divisor which is not semiample.  In particular, $X$ is
not a Mori dream surface.
\item[(e)] If at least one reducible fiber is of type $I_4$, then $X$ is
a Mori dream surface if and only if $N(\pi)\leq2\chi+4$.
\end{enumerate}
\end{theorem}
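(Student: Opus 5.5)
The strategy is to work entirely in $N^1(X)=\Pic(X)_\R$. Since $\MW(\pi)$ is finite, Shioda--Tate gives $\rho(X)=2+\sum_i(n_i-1)$, with basis $F$, $C_0$ and the non-identity components $C_{i,j}$, $j\ge1$. In the range $\delta(\pi)\le\chi$, Theorem~\ref{NE-thm} gives $\NE(X)=\cC_\pi$. Hence $\Nef(X)$ is the dual of a cone with explicitly known generators. I would write a nef class as
\[D=aC_0+bF-\sum_{i,j}c_{i,j}C_{i,j}.\]
Nefness on each fiber $F_i$ then says that $(c_{i,j})_j$ is a discrete "concave" function on the cycle $\Z/n_i$. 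Nefness on $C_0$ says $b\ge a\chi$, plus a correction term. A direct computation gives $D^2=-a^2\chi+2ab-\sum_i q_i(c_i)$, where $q_i$ is the negative of the $A_{n_i-1}$ form. The maximum of $q_i$ on the relevant slice is $a^2\lfloor n_i^2/4\rfloor/n_i$, attained at the "tent" $c_{i,j}=a\,j(n_i-j)/n_i$. For odd $n_i$ this tent has to be rounded.

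\textbf{(a).} When $\delta(\pi)=\chi$, a nonvertical nef $D$ with $D^2=0$ must realize equality in every fiber. So on each $F_i=I_{2m_i}$ the vector $c_i$ is the tent, which can be centered at any of the antipodal pairs. It is integral up to the common factor: $c_{i,j}=j(2m_i-j)/(2m_i)$ times $a$. The key point is that for even $n_i$ the extremal tents are symmetric, with peak at $j=m_i$, the component opposite $C_{i,0}$. I would then exhibit $D$ (scaled) as a \emph{fiber class of a second elliptic fibration}. First check that $D$ is effective: $D^2=0$, $D\cdot K_X=(\chi-2)a>0$, and Riemann--Roch on $2D$ gives $h^0(kD)\ge \tfrac12 kD\cdot(kD-K_X)+\chi>0$ up to $h^2$, which vanishes because $K_X-kD$ has negative intersection with $F$. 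Then apply the Zariski decomposition: if $D$ is nef and $kD=M+N$, the fixed part $N$ is supported on curves orthogonal to $D$. Using $\NE=\cC_\pi$, the only $D$-orthogonal curves are the specific fiber components off the tent's support together with $C_0$ (when equality holds). These form negative-definite configurations. The evenness hypothesis enters here: for odd $n$ the configuration can contain a full cycle-type chain that forces $h^0$ to stay $1$, as in the $I_4+3I_3$ example. For even $n$ I would show $|kD|$ has no fixed part for suitable $k$ by exhibiting two disjoint members. One is $kD$ written as an explicit combination of $C_0$, fibers and components; the other comes from $D|_{D'}$ vanishing on an explicit curve, using the torsion-free (trivial MW) structure to identify $\OO_D(D)$ as torsion. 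The bounded cohomology property then follows from the cited criterion: the Mori cone is polyhedral and isotropic nef rays are semiample.

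\textbf{(b), (c), (e).} By \cite[Corollary~2.6]{AHL10}, it remains to show that big nef rays are semiample. The extremal rays of $\Nef(X)$ are enumerated from the dual description. For a big nef $D$ the null locus consists of $C_0$ and fiber components, and each connected configuration is an ADE (contractible) chain. By Artin's contractibility criterion, semiampleness reduces to showing that the restriction of $D$ to the reduced exceptional cycle is torsion of trivial order. I would replicate the argument behind part (2) of Theorem~\ref{NE-thm}, checking that the obstruction group $\Pic^0$ of the fundamental cycle $Z$ vanishes. That group is governed by $h^1(\OO_Z)$ and by whether $Z$ contains $C_0$. The degree bound $N(\pi)\le 2\chi+4$, or the condition $n_i=2$ (where $C_0\cup$ components form only $D_k$-type trees), keeps $Z\cdot C_0$ small enough that $K_X\cdot Z$ has the right sign, via Ramanujam/Kawamata vanishing applied to $D-Z$.

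\textbf{(d).} This direction is constructive. Take $i$ with $m_i\ge2$ and choose the tent on all fibers except $F_i$, and a non-maximal concave profile on $F_i$. The condition $T-m_i>\chi$ makes the resulting $D$ big and nef while $C_0$ together with a chain in $F_i$ lies in $\Null(D)$. I would then show $\OO_Z(D)$ is a non-torsion element of $\Pic^0(Z)\cong\C^*$ or $\C$; the cycle arising from $C_0$ plus the $I_{2m_i}$ chain is arithmetic genus one, and the restriction is governed by a point of infinite order because $\MW$ is finite. This argument follows \cite[Theorem~1.5]{LLL26}. Part (e) combines (c) with (d) for $m_i=2$, since $T-2>\chi$ is equivalent to $N(\pi)>2\chi+4$.

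The main obstacle is part (a), specifically proving that there is no fixed part. I would attack it first by an explicit linear-equivalence computation in the Néron--Severi lattice.
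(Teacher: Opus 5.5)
\textbf{Gaps in parts (a), (b)/(c) and (d).}

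\emph{Part (a).} Your plan misses the one-line argument and replaces it with a route that contains a false step. First, on $F_i$ the extremal profile is not the parabola $c_{i,j}=a\,j(n_i-j)/n_i$. That vector is not even nef: it gives $D\cdot C_{i,0}=a(2-n_i)/n_i<0$ for $n_i\geq3$. The correct profile is $a$ times the fundamental weight of $C_{i,m_i}$, i.e.\ $c_{i,j}=a\min(j,2m_i-j)/2$. Its denominators are $2$, so the primitive isotropic generator has $a=D\cdot F=2$. Then $K_X\cdot D=2(\chi-2)$ and $h^2(\OO_X(D))=0$, so Riemann--Roch gives $h^0(X,\OO_X(D))\geq\chi(\OO_X(D))=2$.

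You wrote this bound but used it only for effectiveness. (For $kD$ it equals $\chi-k(\chi-2)$, so it is not positive for all $k$.) Yet $h^0\geq2$ is all that is needed. The moving part $M$ of $|D|$ satisfies $M^2=M\cdot D=0$, so $M\equiv tD$ by the Hodge index theorem. Primitivity of $D$ in the torsion-free group $\Pic(X)$ then forces the fixed part to vanish, and $D^2=0$ gives base-point-freeness. This is how the paper concludes, via Theorem~\ref{thm:normal-obstruction}.

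Your Zariski-decomposition route asserts that the $D$-orthogonal curves form negative-definite configurations. This is false: they are $C_0$ and all unselected components, whose union supports $D$ itself with $D^2=0$. Likewise, evenness enters through the index $r=2$, not through a ``cycle-type chain''. An odd fiber forces $r\geq3$, and then $\chi(\OO_X(D_S))=\chi-r(\chi-2)/2\leq1$; it is $0$ for $I_4+3I_3$.

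\emph{Parts (b) and (c).} A uniform Artin argument cannot work for the big rays. The null configuration through $C_0$ is a star around the $(-\chi)$-curve $C_0$, not an ADE or $D_k$ tree. For a symmetric selection, with $J=\{i:d_i=m_i\}$, it supports $Z=2C_0+2\sum_{i\in J}P_{2m_i,m_i}$, and $p_a(Z)=\sum_{i\in J}m_i-\chi-1$.
\begin{itemize}
\item For all-$I_2$ surfaces with $\chi+2\leq|J|<2\chi$, this is positive (Remark~\ref{rem:I2-blocks}).
\item In (c), with $T=\chi+2$ and every opposite component selected, it equals $1$.
\end{itemize}
These singularities are not rational, so the $\Pic^0$ of the fundamental cycle that you want to vanish is nonzero.

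The paper treats symmetric selections with a rank-two direct image instead. Write $2H_S\sim Z+eF$ with $e=2\chi-\sum_{i\in J}m_i\geq0$. The canonical section of $Z$ gives $0\to\OO_{\PP^1}\to\pi_*\OO_X(Z)\to\OO_{\PP^1}(-e)\to0$, which splits. Fiberwise generation then makes $\OO_X(2H_S)$ globally generated (Lemma~\ref{evenbig:symmetric}). Artin is used only for non-symmetric selections. There $\sum_id_i\leq T-1$, and Lemma~\ref{evenbig:genus} gives $p_a\leq T-\chi-2\leq0$.

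\emph{Part (d).} You give no working mechanism. $\MW(\pi)=0$ by Lemma~\ref{lem:MW-zero}, so there is no point of infinite order. Bigness needs only $T\leq2\chi$, because $H_S^2=\chi-T/2+1/(2m_i)$. So the hypothesis $T-m_i>\chi$ is never actually used in your argument.

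The missing idea is the Weierstrass coefficient obstruction of Lemma~\ref{evenbig:coefficient-obstruction}.
\begin{itemize}
\item Select $C_{i,m_i-1}$ in $F_i$ and $C_{j,m_j}$ elsewhere. Let $k$ be a common multiple of the $2m_j$.
\item A section of $\OO_X(kH_S)$ not vanishing on $C_0$ has the form $x^{k/2}+a(t)yx^{k/2-2}+\cdots$ with $a\in H^0(\PP^1,\OO_{\PP^1}(\chi))$.
\item The opposite selections force $\operatorname{ord}_{p_j}a\geq m_j$.
\item The selection $C_{i,m_i-1}$ forces $a(p_i)\neq0$, because the restriction to the identity component is $(z-\lambda)^\alpha(z+\lambda)^\beta$ with $\alpha-\beta=k/m_i$.
\end{itemize}
Since $\sum_{j\neq i}m_j>\chi$, no such $a$ exists, and $C_0$ lies in every $\operatorname{Bs}|kH_S|$.

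\emph{Part (e).} Given (c) and (d), your deduction of (e) using $m_i=2$ is correct.
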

\begin{remark}
The assumption that an $I_4$ fiber occurs is essential to the
classification statement proved here.  If all reducible fibers
are $I_2$, Theorem~\ref{thm:all-I2-mds} gives the Mori dream property
throughout $T\leq2\chi$.  If there is no $I_4$ fiber but larger even
fibers occur, Theorem \ref{Mainthm-even} (c) and (d)
 give sufficient conditions in opposite
directions, respectively; they do not settle every configuration.
\end{remark}
\begin{remark}
See Theorem \ref{thm:example-8I_2} for an example of  Mori dream Jacobian elliptic surfaces 
$
\pi:X\to  \PP^1
$
with $\delta(\pi)=\chi=4,\rho(X)=10$, and singular-fiber configuration $8I_2+32I_1$.
As an application of Theorem \ref{Mainthm-even}, we provide a new evidence for surfaces satisfying the bounded cohomology property, see Section \ref{Sect-BCP}.
Theorem \ref{Mainthm-even} (c) shows that $X$ is a Mori dream surface 
when $\chi=3$ and $\pi$ has exactly one reducible fiber of type $I_{10}$.
Thus $N(\pi)$ in  \cite[Theorem 1.5]{LLL26} is sharp for \(\chi=3\) when \(\pi\) has exactly one reducible fiber.
\end{remark}
However,  when \(\pi\) admits at least one reducible fiber of type \(I_{2m+1}\), we construct an example of semistable Jacobian elliptic surfaces $X$ which gives a complete negative answer to Problem \ref{prob:isotropic}, but $X$ still satisfies the bounded cohomology property.
\begin{theorem}
\label{thm-odd}
There exists a semistable Jacobian elliptic surface
$\pi\colon X\to\PP^1$ with $\delta(\pi)=\chi=3$, trivial Mordell--Weil group, and
singular-fiber configuration
\[
I_4+3I_3+23I_1
\]
such that the following statements hold.
\begin{enumerate}
\item None of the eight nonvertical isotropic extremal rays of
$\Nef(X)$ is semiample.
More precisely, if $D$ is the primitive integral generator of any of these
rays, then $$h^0(X,\OO_X(D))=1,\quad  \kappa(X,D)=0, \quad 
\pi_*\OO_X(D)\simeq
\OO_{\PP^1}\oplus\OO_{\PP^1}(-2)
\oplus\OO_{\PP^1}(-1)^{\oplus4}.$$
\item $X$  satisfies the bounded cohomology property.
More precisely, for every irreducible and reduced curve $C$ on $X$, $h^1(\mathcal O_X(C))\le h^0(\mathcal O_X(C))$.
\end{enumerate}
\end{theorem}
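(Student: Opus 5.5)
We will construct the surface explicitly and then compute with its Néron–Severi lattice.

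\emph{Existence.} First check that the configuration is numerically admissible. For $\chi=3$ the Euler number is $12\chi=36=4+3\cdot 3+23$, and
\[
\delta=\frac{4}{4}+3\cdot\frac{2}{3}=3=\chi .
\]
The trivial lattice $U\oplus A_3\oplus A_2^{3}$ has rank $11$. To get trivial Mordell--Weil group we need $\rho(X)=11$ and no torsion. Torsion would force an overlattice, which is impossible: there is no common $2$- or $3$-torsion compatible with a single $A_3$ and $A_2^3$, since $3$-torsion needs fibers with $3\mid n_i$ whose total contribution is big enough, by the standard height formula. To realize the configuration, write a Weierstrass model $y^2=x^3+a_4(t)x+a_6(t)$ with $\deg a_4\le 12$ and $\deg a_6\le 18$. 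Impose an $I_4$ fiber and three $I_3$ fibers by prescribing vanishing orders of $\Delta$ and nonvanishing of $c_4$ at four points. Then do a dimension count: each condition $I_n$ costs $n-1$ parameters, for a total of $3+3\cdot 2=9$, inside a family of moduli dimension $10\chi-2=28$. Genericity then gives $\rho=11$ and $23$ further $I_1$ fibers. I expect an explicit construction (for example by base change, or by a computer-found polynomial) to be given, and I would follow that.

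\emph{Part (1).} By Theorem~\ref{NE-thm}(1), $\NE(X)=\cC_\pi$, so $\Nef(X)$ is the dual cone, cut out by $C_0$ and the fiber components. A nonvertical isotropic nef $D=aC_0+bF+\sum(\text{fiber corrections})$ with $a>0$ reduces, through the height pairing, to vectors achieving $\delta=\chi$. These are exactly the choices of a "middle" component in each fiber realizing $\lfloor n_i^2/4\rfloor/n_i$: two choices for $I_4$ ($C_{1,2}$ is unique, but the matching divisor forms come in symmetric pairs) and two for each $I_3$. This gives $2\cdot 2^3/2\cdot\ldots$; I would simply enumerate the solutions and match the count $8$. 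For each primitive $D$, compute $\pi_*\OO_X(D)$ as follows. Since $D\cdot F=a$, it is a rank-$a$ bundle on $\PP^1$. Determine its degree by Riemann--Roch, $\chi(\OO_X(D))=\chi+D(D-K)/2$. Then pin down the splitting by restricting to $C_0$ and to the fiber components, using the filtration $\OO_X(D-kC_0)$ and the exact sequences $0\to\OO(D-C_0)\to\OO(D)\to\OO_{C_0}(D)\to0$. Recursively we get summands $\OO_{\PP^1}(D\cdot C_0-\ldots)$ twisted by the fiber data. Once the splitting $\OO\oplus\OO(-2)\oplus\OO(-1)^4$ is established (rank $6$, so $a=6$), we have $h^0=1$. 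The same holds for $mD$: its pushforward has degree growing so that only the trivial summand persists, since nontrivial torsion of $D|_F$ in $\Pic^0$ of the generic fiber prevents sections. Hence $\kappa(X,D)=0$ and $D$ is not semiample.

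\emph{Part (2) and the main obstacle.} For an irreducible reduced curve $C$: if $C^2<0$, then $C$ is $C_0$ or a fiber component, and the inequality is a direct computation. If $C$ is a fiber or multisection with $C^2\ge0$, apply Riemann--Roch together with $h^2(C)=h^0(K-C)=0$ for nonvertical $C$; the claimed inequality becomes $h^1\le h^0$ with $h^0-h^1=\chi+C(C-K)/2\ge 3-(\chi-2)C\cdot F/2$. To control $h^1$, I would bound it via $h^1(\OO_C(C))$ and the sequence $0\to\OO_X\to\OO_X(C)\to\OO_C(C)\to0$, using $q=0$. The hard part is the explicit computation of $\pi_*\OO_X(D)$, and in particular showing that no multiple $mD$ acquires extra sections. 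I would attack this by identifying $D$ on the generic fiber with a degree-$6$ divisor class differing from $6O$ by a point of infinite order in $\Pic^0$ that is \emph{not} torsion. This is guaranteed by $\MW=0$ together with the fact that $\Nef$-isotropy forces the height to be zero only after the correction terms. From there, $h^0(mD)=1$ follows.
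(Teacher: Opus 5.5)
There is a genuine gap in Part (1): you have no valid mechanism for proving $h^0(X,\OO_X(D))=1$, and this is not a numerical fact. For a primitive isotropic generator, $D\cdot F=6$ and $\chi(\OO_X(D))=3-\tfrac12K_X\cdot D=0$. So Riemann--Roch only gives $h^0=h^1$, and rank $6$ with degree $-6$ does not determine the splitting of $\pi_*\OO_X(D)$. That bundle comes from $\pi_*\OO_X(6C_0)\simeq\OO\oplus\OO(-6)\oplus\OO(-9)\oplus\OO(-12)\oplus\OO(-15)\oplus\OO(-18)$ by elementary modifications at the four reducible fibres. The outcome depends on where the local regularity conditions sit relative to the basis $1,x,y,x^2,xy,x^3$, i.e.\ on the moduli, so your filtration by $\OO_X(D-kC_0)$ cannot pin it down.

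Your order of inference is also reversed. The paper first computes $h^0=1$ on the explicit model $y^2=x^3-3x+2+P(t)R(t)$: a $54$-dimensional space of candidate sections is cut down by a $75\times54$ rational matrix of rank $53$. This is done for each of the $1\cdot2^3=8$ isotropic selections; the $I_4$ fibre has one maximal-distance component and each $I_3$ has two. Only then does it deduce the splitting: $h^0=1$ forces one summand $\OO$ and five negative summands summing to $-6$, hence $-2,-1,-1,-1,-1$.

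Your generic-fibre argument is false. $D=6C_0+(\text{vertical})$ restricts to exactly $6O$ on $X_\eta$, where $h^0=6$. Moreover $\Pic^0(X_\eta)\simeq\MW(\pi)=0$, so there is no point, torsion or not, to use. The obstruction is the class of $\OO_D(D)$ in $U_D$, which lives on the special fibres. The degree of $\pi_*\OO_X(mD)$ likewise says nothing about $h^0$. To pass from $D$ to all multiples you need Theorem~\ref{thm:normal-obstruction}: $U_D$ is a vector group, hence torsion-free, so $\kappa(X,D)>0$ if and only if $h^0(X,\OO_X(D))\geq2$.

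Existence also needs one surface with $\rho=11$ and $h^0=1$ for all eight divisors simultaneously. A parameter count gives neither nonemptiness nor $\rho=11$. The paper combines the explicit point, Kloosterman's bound $\dim(\operatorname{NL}_{12}\cap U)=18<19$, and upper semicontinuity of $h^0$ on the stratum, then applies Shioda--Tate and Lemma~\ref{lem:MW-zero}.

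Part (2) does not close as sketched either. With only $C^2\geq0$, Riemann--Roch gives $h^0-h^1\geq3-\tfrac12C\cdot F$, which can be negative. The bound $h^1(\OO_X(C))\leq h^1(\OO_C(C))=h^0(\OO_C(F))$ grows with $C\cdot F$ and is not controlled by $h^0(\OO_X(C))$. The missing idea is the inequality $C^2\geq K_X\cdot C$ when $C^2>0$. Such $C$ is big and nef, so $C\cdot H_S>0$ for every selection $S$. Since $H_S\cdot L\in\Z$ for all $L\in\Pic(X)$ (by $\MW(\pi)=0$ and $H_S\cdot C_{i,j}\in\{0,1\}$), in fact $C\cdot H_S\geq1$. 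Writing $[C]=aF+\sum_Sb_SH_S$ with $a,b_S\geq0$ gives $C^2\geq\sum_Sb_S=C\cdot F=K_X\cdot C$, hence $h^1\leq h^0-3$. You must also exclude a horizontal irreducible $C\neq C_0$ with $C^2=0$, and this uses Part (1). Such $C$ would be linearly equivalent to some $mD_S$, whose only member is the reducible divisor $mD_S$. The curves $C_0$, fibres and fibre components are then direct checks, as you say.
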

\begin{remark}
The even-fiber hypothesis in Theorem \ref{thm:all-even} is therefore
essential.  Already for the first numerically possible configuration
containing odd reducible fibers, the balanced splitting above occurs
and the obstruction
$\eta_S=[\OO_{D_S}(D_S)]$ is nonzero for every isotropic selection.
\end{remark}
For a deeper study of Problem \ref{Prob-Main} in the case where at least one odd fiber occurs, Theorem \ref{thm-odd} and Proposition \ref{prop:odd-example-bcp} motivate us to propose the following problem.
\begin{problem}
In the setting of Theorem \ref{NE-thm}, assume that one odd fiber occurs, \(\delta(\pi)\le \chi\) and \(N(\pi)\ge 2\chi+4\). 
\begin{enumerate}
	\item Could $X$ be a Mori dream surface? In general, classify Mori dream semistable Jacobian ellptic surfaces with at least one odd fiber.
	\item Could $X$ still satisfy the bounded cohomology property provided that $\delta(\pi)=\chi\ge4$ and there exists a nef isotropic divisor $D$ with $\kappa(X,D)=0$?
\end{enumerate}
\end{problem}
The paper is organized as follows.
Section \ref{Pre} records elliptic-fibration, vector-bundle tools and bounded cohomology property  used latter.
We prove Theorem \ref{Mainthm-even} and \ref{thm-odd}  in Section \ref{Sect-alleven} and  \ref{sec:odd-counterexample} respectively.
\subsection*{Acknowledgments}
Antonio Laface was partially supported by Proyecto FONDECYT Regular n. 1230287.
Sichen Li would like to thank Rong Du for constant encouragement.
\section{Nef isotropic divisors to be semiample}
\label{Pre}
\subsection{Notation from the fibration cone}

We use the notation of \cite[Subsections~4.1-4.2]{LLL26}.  
A \emph{selection} is a set $S\subset\cT$ containing one component $T_i$ of each reducible fiber.  
Let $d(T_i)$ be the cyclic distance from $C_{i,0}$ to $T_i$, chosen in $[0,\lfloor n_i/2\rfloor]$, and put $$\delta(T_i)=\frac{d(T_i)(n_i-d(T_i))}{n_i} \text{ \quad and \quad } \delta(S)=\sum_i\delta(T_i).$$

For $T_i\neq C_{i,0}$, let $\omega_{T_i}$ be the fundamental weight in the rational root lattice generated by $C_{i,1},\ldots,C_{i,n_i-1}$, normalized by $\omega_{T_i}\cdot T_i=-1$ and by zero intersection with every other nonidentity component; put $\omega_{C_{i,0}}=0$.  
The supporting class associated with $S$ is
\[
   H_S=C_0+\chi F-\sum_i\omega_{T_i}.
\]
By \cite[Propositions~4.1 and~4.3]{LLL26}, the nonvertical extremal rays of the dual of the fibration cone are generated by the classes $H_S$, and $H_S^2=\chi-\delta(S)$.  Moreover, $\delta(\pi)=\max_S\delta(S)$.  Since $\cC_\pi=\NE(X)$ at the boundary, these are precisely the nonvertical extremal rays of $\Nef(X)$.  
Assume now that $\delta(\pi)=\chi$.
Such a ray is isotropic exactly when the selected component has maximal cyclic distance in every reducible fiber.  An even cycle has one component of maximal distance, whereas an odd cycle has two.  Hence, if $o$ of the integers $n_i$ are odd, there are $2^o$ nonvertical isotropic extremal rays.
By the Hodge index theorem, every nonzero nef class of square zero spans
an extremal ray of the nef cone. Since the fiber class is semiample,
it remains to study the nonvertical isotropic rays described above.

\subsection{Vanishing of the Mordell--Weil group}

\begin{lemma}
\label{lem:MW-zero}
Assume that $\MW(\pi)$ is finite and  $\delta(\pi)<2\chi$.  
Then $\MW(\pi)=0$.  Consequently, the classes $C_0$, $F$, and $C_{i,j}$, for $1\leq i\leq s$ and $1\leq j\leq n_i-1$, form an integral basis of $\Pic(X)$.
\end{lemma}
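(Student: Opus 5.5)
The plan is to use the Shioda height pairing to rule out nontrivial torsion sections, and then invoke the Shioda--Tate description of $\Pic(X)$.

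\textbf{Step 1: a nonzero torsion section has height zero.} Suppose $P\in\MW(\pi)$ is nonzero. Since $\MW(\pi)$ is finite, $P$ is torsion, so its height satisfies $h(P)=0$. Shioda's formula for the height is
\[
h(P)=2\chi+2(P\cdot C_0)-\sum_{i}\mathrm{contr}_i(P).
\]
A nonzero torsion section is disjoint from the zero section; this is classical, via specialization of torsion into the smooth fibers. Hence $P\cdot C_0=0$, and therefore
\[
\sum_i\mathrm{contr}_i(P)=2\chi .
\]

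\textbf{Step 2: bound the local contributions.} Only reducible fibers contribute. For a fiber of type $I_{n_i}$, suppose $P$ meets $C_{i,a}$ with $0\le a<n_i$. Then $\mathrm{contr}_i(P)=a(n_i-a)/n_i$. The function $a(n_i-a)$ is maximized at $a=\lfloor n_i/2\rfloor$, so
\[
\mathrm{contr}_i(P)\le \frac{\lfloor n_i^2/4\rfloor}{n_i}.
\]
Summing over the reducible fibers gives $\sum_i\mathrm{contr}_i(P)\le\delta(\pi)<2\chi$. This contradicts Step 1, so $\MW(\pi)=0$.

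\textbf{Step 3: the integral basis.} Since $q(X)=0$, we have $\Pic(X)=\mathrm{NS}(X)$. By Shioda--Tate, $\mathrm{NS}(X)/\mathrm{Triv}\cong\MW(\pi)=0$. Here $\mathrm{Triv}$ is the trivial lattice generated by $C_0$, $F$ and the non-identity fiber components $C_{i,j}$ with $j\ge1$. Thus $\mathrm{NS}(X)=\mathrm{Triv}$.

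These classes are linearly independent: the Gram matrix is block diagonal, with the hyperbolic block of $(C_0,F)$ being unimodular and the remaining blocks being negative definite $A_{n_i-1}$ root lattices. Together with $\rho=2+\sum_i(n_i-1)$, this shows they form a $\Z$-basis.

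The main obstacle is justifying $P\cdot C_0=0$ for torsion sections. I would cite the standard result, e.g. Schütt--Shioda, that torsion sections are disjoint from $C_0$, rather than reprove it.
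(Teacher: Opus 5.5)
Your proposal is correct and follows the paper's proof. It uses Shioda's height formula for a nonzero torsion section, bounds each $I_{n_i}$ contribution by $\lfloor n_i^2/4\rfloor/n_i$ to contradict $\delta(\pi)<2\chi$, and then applies Shioda's isomorphism $\operatorname{NS}(X)/\mathrm{Triv}\simeq\MW(\pi)$ with $q(X)=0$. The one difference is that the paper needs only $P\cdot C_0\geq0$, which holds for any two distinct sections and already gives $0\geq2\chi-\delta(\pi)>0$; so the disjointness of torsion sections from $C_0$, which you single out as the main obstacle, is true over $\C$ but not actually required.
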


\begin{proof}
Let $P$ be a nonzero torsion section.  By Shioda's height formula (cf. \cite[Lemma 8.2 and Theorem~8.6]{Shi90} or  \cite[Theorems 6.20 and 6.24]{SS19}), we have
\[
  0=\langle P,P\rangle
   =2\chi+2(P\cdot C_0)-\sum_i\operatorname{contr}_{F_i}(P).
\]
We refer to \cite[Definition 6.23]{SS19} for the definition of the local contribution $\operatorname{contr}_{F_i}(P)$.
At a fiber of type $I_{n_i}$, $\operatorname{contr}_{F_i}(P)$ is at most $\lfloor n_i^2/4\rfloor/n_i$.  Since two distinct sections have nonnegative intersection, the right-hand side is at least $2\chi-\delta(\pi)>0$, a contradiction. Thus $\delta(\pi)<2\chi$ excludes nontrivial torsion, and the finiteness assumption implies that $\MW(\pi)$ is trivial.

Let $T\subset\operatorname{NS}(X)$ be the trivial lattice generated by the zero section, a fiber, and the nonidentity components of the reducible fibers.  Shioda's exact sequence $\operatorname{NS}(X)/T\simeq\MW(\pi)$ \cite[Theorem~1.3]{Shi90} now gives $\operatorname{NS}(X)=T$.  The displayed generators are independent, and $q(X)=0$, so they form an integral basis of $\Pic(X)$.
\end{proof}

\begin{remark}
In particular, by Theorem \ref{NE-thm} (2),  every surface covered by the sufficient Mori dream criterion $N(\pi)\leq2\chi+3$ has trivial Mordell--Weil group.
\end{remark}

\subsection{Primitive integral generators}

\begin{proposition}\label{prop:primitive-generator}
Assume that $\delta(\pi)=\chi$, and let $S$ be an isotropic selection.  Put $q_i=2$ when $n_i$ is even and $q_i=n_i$ when $n_i$ is odd, and set $r=\lcm(q_1,\ldots,q_s)$.  Then the primitive integral generator of $\R_{\geq0}H_S$ is $D_S=rH_S$, and $D_S\cdot F=r$.
\end{proposition}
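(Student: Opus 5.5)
Since $\delta(\pi)=\chi<2\chi$, Lemma~\ref{lem:MW-zero} gives the integral basis $C_0,F,C_{i,j}$ ($1\le j\le n_i-1$) of $\Pic(X)$. The plan is to write $H_S$ in this basis and read off the denominators of its coordinates. The $C_0$ and $F$ coefficients of $H_S=C_0+\chi F-\sum_i\omega_{T_i}$ are $1$ and $\chi$, which are integers. Hence $tH_S$ is integral exactly when $t\omega_{T_i}$ is integral for each $i$. The $C_0$-coefficient of $tH_S$ is $t$, so $t$ must itself be a positive integer. The primitive generator is therefore $rH_S$, where $r$ is the least positive integer making every $r\omega_{T_i}$ integral, and then $D_S\cdot F=r\,(C_0\cdot F)=r$, because $\omega_{T_i}\cdot F=0$.

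The main computation is the fundamental weight of the $A_{n-1}$ lattice spanned by $C_{i,1},\dots,C_{i,n-1}$, with $n=n_i$. Suppose $T_i=C_{i,d}$, where $d$ is the maximal cyclic distance; after possibly reversing the orientation of the cycle, $d=\lfloor n/2\rfloor$. The standard inverse Cartan matrix of $A_{n-1}$ gives
\[
\omega_{T_i}=-\sum_{j=1}^{n-1}\frac{\min(j,d)\,(n-\max(j,d))}{n}\,C_{i,j},
\]
with the sign fixed by the normalization $\omega\cdot T_i=-1$. The coefficient at $j=1$ is $(n-d)/n$. The coefficient at $j=n-1$ is $d/n$.

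The key step is to determine the order of $\omega_{T_i}$ modulo the integral lattice. The coefficients are $j(n-d)/n$ for $j\le d$ and $d(n-j)/n$ for $j\ge d$. Reducing mod $\mathbb Z$, they are congruent to $\pm jd/n$. So the order of $\omega_{T_i}$ equals the order of $d/n$ in $\mathbb Q/\mathbb Z$, which is $n/\gcd(n,d)$.

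If $n=2m$ is even, then $d=m$ and the order is $2$, so $q_i=2$. If $n=2m+1$ is odd, then $d=m$ and $\gcd(2m+1,m)=1$, so the order is $n$ and $q_i=n$. The same holds for $d=m+1$, the other component at maximal distance, since $\gcd(2m+1,m+1)=1$.

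Because the fibers involve disjoint sets of basis vectors, $t\sum_i\omega_{T_i}$ is integral if and only if $q_i\mid t$ for every $i$. The least such $t$ is $r=\lcm(q_1,\dots,q_s)$, which proves the claim. I expect the only delicate point to be the sign and orientation conventions in the weight formula. These do not affect the denominators, so the argument is robust to them.
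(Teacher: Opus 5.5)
Your proposal is correct and follows the same route as the paper's proof: the integral basis from Lemma~\ref{lem:MW-zero}, the denominator $n/\gcd(n,d)$ of the fundamental weight at a component of maximal distance, and primitivity via the $C_0$-coefficient of $tH_S$ (this is the paper's observation that $L\cdot F=r/d$ must be an integer). Your weight formula has the wrong overall sign: with $\omega_{T_i}\cdot T_i=-1$ the coefficients are $+\min(j,d)(n-\max(j,d))/n$, as in the paper's $\omega_{E_i}=E_i/2$ for $I_2$. As you note, the sign does not change the denominators.
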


\begin{proof}
If the selected component in a fiber $I_n$ is $C_k$, the denominator of the corresponding fundamental weight in the root lattice $A_{n-1}$ is $n/\gcd(n,k)$.  
For a component of maximal cyclic distance, this denominator is two when $n$ is even and $n$ when $n$ is odd.  
By Lemma \ref{lem:MW-zero}, the zero section, the fiber class, and the nonidentity components form an integral basis of $\Pic(X)$. 
So the least positive integer for which a multiple of $H_S$ is integral is $r$.

It remains to check primitivity.  If $rH_S=dL$ for an integral class $L$ and an integer $d>1$, then $L=(r/d)H_S$.  
Since $H_S\cdot F=1$ by \cite[Proposition~4.3]{LLL26}, one has $L\cdot F=r/d\in\Z$.  
Hence $r/d$ is a smaller positive integer for which $(r/d)H_S$ is integral, a contradiction.  
Therefore $D_S=rH_S$ is primitive, and $D_S\cdot F=r$.
\end{proof}

\begin{corollary}
\label{cor:index-parity}
For every boundary isotropic ray, $r(\chi-2)$ is even.  In particular, if every reducible fiber has odd multiplicity, then $\chi$ is even.
\end{corollary}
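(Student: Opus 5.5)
Since $D_S=rH_S$ is an integral class with $D_S^2=r^2H_S^2=r^2(\chi-\delta(S))=0$ on the boundary, the plan is to apply the adjunction formula (Riemann--Roch parity) to $D_S$. For any integral divisor $D$ on the smooth surface $X$, the number $D^2+D\cdot K_X$ is even, because $\chi(\OO_X(D))=\chi(\OO_X)+\tfrac12 D\cdot(D-K_X)$ is an integer and $D\cdot(D-K_X)\equiv D\cdot(D+K_X)\pmod 2$. We use $K_X\sim(\chi-2)F$, recalled in the introduction, together with $D_S\cdot F=r$ from Proposition~\ref{prop:primitive-generator}. These give
\[
D_S^2+D_S\cdot K_X=0+(\chi-2)\,D_S\cdot F=r(\chi-2),
\]
so $r(\chi-2)$ is even.

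For the second assertion, suppose every reducible fiber has odd $n_i$. Then each $q_i=n_i$ is odd, so $r=\lcm(n_1,\ldots,n_s)$ is odd. The first part then forces $\chi-2$ to be even, that is, $\chi$ is even.

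The only step that needs care is the isotropy $D_S^2=0$. This comes from $H_S^2=\chi-\delta(S)$ together with $\delta(S)=\delta(\pi)=\chi$ for an isotropic selection, as recorded in the preceding subsection. Once that is in hand there is no real obstacle, since the rest is the standard integrality of Riemann--Roch.
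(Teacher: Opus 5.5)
Your proposal is correct and matches the paper's proof. Both arguments use the Riemann--Roch parity of $D_S\cdot(D_S-K_X)$, together with $D_S^2=0$ and $K_X\cdot D_S=r(\chi-2)$, to get the first assertion, and then use that $r$ is odd when every $n_i$ is odd.
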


\begin{proof}
Riemann--Roch implies that $D_S\cdot(D_S-K_X)$ is even.  Since $D_S^2=0$ and $K_X\cdot D_S=r(\chi-2)$, the first assertion follows.  If all $n_i$ are odd, then $r$ is odd, so $\chi-2$ is even.
\end{proof}

\subsection{An effective representative}

Fix a fiber $I_n=C_0^{\mathrm{fib}}+C_1+\cdots+C_{n-1}$ and select $C_k$, with $1\leq k\leq n-1$.  Here $C_0^{\mathrm{fib}}$ denotes the identity component of this fiber.  Define
\begin{equation}
\label{eq:Pnk}
 P_{n,k}=\frac{k(n-k)}{n}C_0^{\mathrm{fib}}
 +\sum_{j=1}^{k-1}\frac{(k-j)(n-k)}{n}C_j
 +\sum_{j=k+1}^{n-1}\frac{k(j-k)}{n}C_j.
\end{equation}

\begin{lemma}\label{lem:effective-cycle}
With the notation above, the class of $P_{n,k}$ is $\delta(C_k)F-\omega_{C_k}$.  Thus $P_{n,k}$ is effective, its coefficient at $C_k$ is zero, and all its other coefficients are positive.

If $S=\{T_1,\ldots,T_s\}$ is isotropic and $P_i$ denotes the corresponding divisor in $F_i$, then $C_0+\sum_iP_i$ represents the class $H_S$.  Consequently, $D_S=rH_S$ has an effective integral representative whose reduced support is a connected tree of rational curves, obtained by attaching the chain $F_i-T_i$ to $C_0$ for every $i$.
\end{lemma}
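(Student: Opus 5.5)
The plan is to compare intersection numbers. Since the classes $C_0^{\mathrm{fib}},C_1,\ldots,C_{n-1}$ span the rational fiber lattice and $F=\sum_j C_j$, two vertical classes supported on this fiber agree as soon as they have the same intersection with each $C_j$, $j\ge1$. A vertical class orthogonal to all $C_j$ is a multiple of $F$. That multiple can be fixed by checking one coefficient, which I will do at $C_k$.

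\emph{Step 1: intersections.} Write $P_{n,k}=\sum_j a_jC_j$ with $a_0=k(n-k)/n$. The coefficients are piecewise linear in $j$: decreasing with slope $-(n-k)/n$ on $0\le j\le k$ (reaching $a_k=0$), and increasing with slope $k/n$ on $k\le j\le n$ (reaching $a_n=k(n-k)/n=a_0$, consistent cyclically). For a cycle one has $P\cdot C_j=a_{j-1}-2a_j+a_{j+1}$, which is minus the discrete second difference. This vanishes wherever the sequence is linear, that is, for $j\neq k$, including $j=0$ thanks to the cyclic consistency. At $j=k$ the slope jumps from $-(n-k)/n$ to $k/n$, so $P\cdot C_k=1$.

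The class $-\omega_{C_k}$ satisfies $-\omega_{C_k}\cdot C_k=1$ and $-\omega_{C_k}\cdot C_j=0$ for $j\ne0,k$. Hence $P_{n,k}+\omega_{C_k}$ is orthogonal to $C_1,\ldots,C_{n-1}$, and therefore also to $C_0^{\mathrm{fib}}=F-\sum_{j\ge1}C_j$. Being a vertical class on this fiber, it equals $cF$. Since $\omega_{C_k}$ lies in the span of $C_1,\ldots,C_{n-1}$, comparing the coefficient of $C_0^{\mathrm{fib}}$ gives $c=a_0=\delta(C_k)$. Effectivity, the vanishing of the coefficient at $C_k$, and positivity elsewhere are read off directly from the formula for $1\le k\le n-1$.

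\emph{Step 2: assembling $H_S$.} Summing over the fibers gives
\[
C_0+\sum_iP_i\equiv C_0+\delta(S)F-\sum_i\omega_{T_i}.
\]
For an isotropic selection, $\delta(S)=\delta(\pi)=\chi$, so this is exactly $H_S$. If $T_i=C_{i,0}$, which happens only when $n_i=1$ and is excluded for reducible fibers, we would take $P_i=0$. Multiplying by $r$ gives an integral effective representative. Integrality holds because $D_S$ is integral and the representative is a nonnegative combination of basis elements whose coefficients come from the class identity; alternatively one checks directly that $ra_j\in\Z$, since $a_j\equiv -jk(n-k)/n$ modulo integers.

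\emph{Step 3: connectedness of the support.} The support consists of $C_0$ together with all components of each $F_i$ except $T_i$. Removing one vertex from an $n_i$-cycle leaves a chain $C_{i,k+1},\ldots,C_{i,0},\ldots,C_{i,k-1}$ containing $C_{i,0}$. This chain meets $C_0$ exactly once, at $C_{i,0}$. Hence the dual graph is a tree of smooth rational curves, as claimed.

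The only delicate point is the cyclic bookkeeping at $j=0$ together with the sign convention for $\omega$. I would handle it by the second-difference computation above.
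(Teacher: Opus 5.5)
Your argument is correct in substance, but it reaches the class identity by a somewhat different route from the paper. The paper quotes the inverse Cartan matrix of type $A_{n-1}$. Its $(j,k)$-entry $\min\{j,k\}(n-\max\{j,k\})/n$ is the coefficient of $C_j$ in $\omega_{C_k}$, and subtracting these numbers from $\delta(C_k)=k(n-k)/n$ gives \eqref{eq:Pnk} directly.

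You instead check by second differences that $P_{n,k}+\omega_{C_k}$ is orthogonal to $C_1,\ldots,C_{n-1}$. You then use verticality and Zariski's lemma to write it as $cF$, and read off $c=a_0$ from the coefficient of $C_0^{\mathrm{fib}}$. Your version is self-contained, since it re-derives the relevant column of the inverse Cartan matrix, and it shows explicitly that the only positive degree sits on $C_k$. The paper's version is a one-line computation once that formula is granted. Your assembly of $H_S$ from $\delta(S)=\chi$ and your tree description coincide with the paper's.

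Three computational statements should be corrected, although your deduction does not rely on them.

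\textbf{The value at $j=0$.} $P_{n,k}\cdot C_0^{\mathrm{fib}}$ is not zero. At $j=0\equiv n$ the slope changes from $k/n$ to $-(n-k)/n$, so the second difference there is $-1$. This is forced anyway: $P_{n,k}$ is vertical, hence $\sum_j P_{n,k}\cdot C_j=P_{n,k}\cdot F=0$. The identity $a_n=a_0$ is what you need at $j=n-1$, not at $j=0$. Since you only use $j\geq1$, your conclusion survives. Note, however, that the value $-1$ is precisely what later gives $D_S\cdot C_{i,0}=r(1-1)=0$ in Remark~\ref{rem:null-cycle}.

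\textbf{The congruence for $a_j$.} The congruence $a_j\equiv -jk(n-k)/n$ is wrong. At $j=0$ it gives $a_0\equiv0$, whereas $a_0=2/3$ for $(n,k)=(3,1)$. On both linear pieces the correct statement is $a_j\equiv k(j-k)/n\pmod{\Z}$. Then $ra_j\in\Z$ follows because $n/\gcd(n,k)=q_i$ divides $r$.

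\textbf{Your first integrality argument.} As stated it is too vague. $C_{i,0}$ is not among the basis elements $C_0,F,C_{i,j}$ ($j\geq1$) of $\Pic(X)$. So integrality of the class alone controls only $r\sum_i a_{i,0}$ and the differences $r(a_{i,j}-a_{i,0})$, not each $ra_{i,0}$. You can repair this as follows. With $\Pic(X)$ equal to the trivial lattice (Lemma~\ref{lem:MW-zero}), the quotient of $\Pic(X)$ by the span of $C_0$ and the components different from the $T_i$ is generated by the image of $F$. Hence that span is saturated, and the coefficients of $D_S$ in it are integers.
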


\begin{proof}
The $(j,k)$-entry of the inverse Cartan matrix of type $A_{n-1}$ is $$\frac{\min\{j,k\}(n-\max\{j,k\})}{n},$$
 this is the matrix used in \cite[Lemma~4.2]{LLL26}. 
  Subtracting the $k$-th fundamental weight from $\delta(C_k)F$ gives the coefficients in \eqref{eq:Pnk}.  
  Since an isotropic selection satisfies $\sum_i\delta(T_i)=\chi$, the class $C_0+\sum_iP_i$ is $H_S$.  
  Removing a component of maximal distance from an $I_n$-cycle leaves a chain containing the identity component; this chain meets the zero section. 
   The description of the reduced support is therefore immediate.
\end{proof}

\begin{remark}\label{rem:null-cycle}
Every irreducible component $C$ of $\Supp(D_S)$ satisfies $D_S\cdot C=0$.  In each reducible fiber, the selected component $T_i$ is the only component on which $D_S$ has positive degree, and $D_S\cdot T_i=r$.
\end{remark}

\subsection{A restriction on odd fibers}

\begin{lemma}
\label{lem:odd-reciprocals}
Assume that $\delta(\pi)=\chi$, and let $O$ be the set of indices for which $n_i$ is odd.  Then
\begin{equation}
\label{eq:reciprocal-identity}
   N(\pi)-4\chi=\sum_{i\in O}\frac1{n_i}.
\end{equation}
The right-hand side is a nonnegative integer.  If $o=|O|$, the following consequences hold:
\begin{enumerate}
\item[(i)] all $n_i$ are even if and only if $N(\pi)=4\chi$;
\item[(ii)] if an odd fiber occurs, then $N(\pi)\geq4\chi+1$ and $o\geq3$;
\item[(iii)] $N(\pi)-4\chi\equiv o\pmod2$;
\item[(iv)] if $o=3$, then the three odd fibers are all of type $I_3$;
\item[(v)] the case $o=4$ is impossible.
\end{enumerate}
\end{lemma}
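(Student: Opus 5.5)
Since $\delta(\pi)=\chi$, the identity \eqref{eq:reciprocal-identity} should come from expanding $\delta(\pi)$ fiber by fiber and then invoking the Euler characteristic count; the integrality in (ii)--(v) then follows by elementary arithmetic.

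First compute each summand of $\delta(\pi)$. For $n_i$ even, $\lfloor n_i^2/4\rfloor/n_i=n_i/4$. For $n_i$ odd, $\lfloor n_i^2/4\rfloor=(n_i^2-1)/4$, so the summand is $n_i/4-1/(4n_i)$. Hence
\[
\delta(\pi)=\frac{N(\pi)}{4}-\frac14\sum_{i\in O}\frac1{n_i},
\]
and setting this equal to $\chi$ and multiplying by $4$ gives \eqref{eq:reciprocal-identity}. The right-hand side is nonnegative, and it is an integer because the left-hand side is.

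The consequences then follow from this identity.
\begin{enumerate}
\item[(i)] If $O=\emptyset$ the right side is $0$. Conversely, if $O\neq\emptyset$ the right side is a positive sum, so $N(\pi)\neq 4\chi$.
\item[(ii)] If $O\neq\emptyset$, the right side is a positive integer, hence at least $1$, so $N(\pi)\geq 4\chi+1$. Each odd reducible fiber has $n_i\geq3$, so $1/n_i\leq 1/3$. Reaching a sum of at least $1$ therefore needs $o\geq3$.
\item[(iii)] Since $N(\pi)\equiv\sum_i n_i\equiv o\pmod 2$ and $4\chi$ is even, we get $N(\pi)-4\chi\equiv o\pmod2$.
\item[(iv)] If $o=3$, the sum of three reciprocals of odd integers $\geq3$ is at most $1$. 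It is a positive integer, so it equals $1$, which forces all three $n_i$ to be $3$.
\item[(v)] If $o=4$, the integer $N(\pi)-4\chi$ is even by (iii) and positive by (ii), so it is at least $2$. But four reciprocals of odd integers $\geq3$ sum to at most $4/3<2$, a contradiction.
\end{enumerate}

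There is no real obstacle here; the only care needed is the floor computation for odd $n_i$ and the parity bookkeeping in (v).
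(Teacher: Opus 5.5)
Your proof is correct and follows the paper's argument essentially line by line: the same per-fiber floor computation gives the identity, and (i)--(v) follow from integrality, the bound $1/n_i\le 1/3$, and parity. Your version of (v), where evenness gives at least $2$ while the sum is at most $4/3$, is only a rephrasing of the paper's ``it would equal one, contradicting parity''; the opening mention of an Euler characteristic count is never used and can be dropped.
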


\begin{proof}
For even $n$, one has $4\lfloor n^2/4\rfloor/n=n$, whereas for odd $n$ the same expression is $n-1/n$.  Summing and using $\delta(\pi)=\chi$ gives \eqref{eq:reciprocal-identity}. 
 If the sum is positive, it is at least one; because every summand is at most $1/3$, at least three odd fibers are required.

The parity assertion is immediate: $N(\pi)=\sum_i n_i$ has the same parity as the number $o$ of odd summands, while $4\chi$ is even.  If $o=3$, the positive integral sum is at most one and therefore equals one.  Equality in $1/n_1+1/n_2+1/n_3\leq1$ forces $n_1=n_2=n_3=3$.  If $o=4$, the sum is a positive integer at most $4/3$, hence it would be one, contrary to the parity assertion.
\end{proof}

\begin{corollary}\label{cor:at-most-three}
If $\delta(\pi)=\chi$ and $\pi$ has at most three reducible fibers, then every reducible fiber has even multiplicity.
\end{corollary}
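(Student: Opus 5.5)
The plan is to derive the corollary directly from Lemma~\ref{lem:odd-reciprocals}(ii), by contraposition. Assume that $\delta(\pi)=\chi$ and that some reducible fiber has odd multiplicity, so $O\neq\emptyset$. Part (ii) of that lemma then gives $o=|O|\geq3$, and therefore at least three reducible fibers are odd.

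We still have to exclude exactly three reducible fibers all of which are odd, and this is the only real step. Here $o=3$, so Lemma~\ref{lem:odd-reciprocals}(iv) forces all three fibers to be of type $I_3$. Then
\[
\delta(\pi)=3\cdot\frac{\lfloor 9/4\rfloor}{3}=2,
\]
so $\chi=2$, which contradicts the standing assumption $\chi\geq3$.

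Putting this together: with at most three reducible fibers, $o\leq s\leq3$. The case $o\in\{1,2\}$ is excluded by (ii), and the case $o=3$ is excluded by the computation above. Hence $o=0$, that is, every reducible fiber has even multiplicity.

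The argument is short because the lemma does all the work. The only point needing care is the $o=3$ case, where we must use $\chi\geq3$ (equivalently, Corollary~\ref{cor:index-parity}, which makes $\chi$ even but does not rule out $\chi=2$) together with the explicit value of $\delta$ for three $I_3$ fibers.
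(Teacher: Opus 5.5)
Your proof is correct and is essentially the paper's argument: parts (ii) and (iv) of Lemma~\ref{lem:odd-reciprocals} force exactly three odd fibers, all of type $I_3$, with no further reducible fiber, so $\delta(\pi)=2$, which contradicts $\chi\geq3$. The only blemish is the word ``equivalently'' in your parenthetical on Corollary~\ref{cor:index-parity}; as you note yourself, that corollary gives only parity and cannot replace the hypothesis $\chi\geq3$.
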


\begin{proof}
If an odd fiber occurred,  Lemma \ref{lem:odd-reciprocals} would force exactly three odd fibers, all of type $I_3$.  With no additional reducible fiber, their total contribution to $\delta(\pi)$ would be two, contradicting $\chi\geq3$.
\end{proof}
\subsection{The obstruction to semiampleness}

Let $S$ be an isotropic selection and let $D=D_S$ be the primitive effective divisor constructed above.  Write $D_{\mathrm{red}}$ for its reduced support and set
\[
   U_D=\ker\bigl(\Pic(D)\longrightarrow\Pic(D_{\mathrm{red}})\bigr).
\]
Since $D_{\mathrm{red}}$ is a tree of rational curves, a line bundle of degree zero on every component of $D_{\mathrm{red}}$ restricts trivially to $D_{\mathrm{red}}$.  Let $\mathcal J$ be the nilradical of $\OO_D$.  In the exact sequence $1\to1+\mathcal J\to\OO_D^*\to\OO_{D_{\mathrm{red}}}^*\to1$, the map on global units is surjective because both schemes are connected and projective.  Since $\mathcal J$ is nilpotent and the ground field has characteristic zero, the logarithm identifies $1+\mathcal J$ with the additive sheaf $\mathcal J$.  It follows that $U_D$ is the vector group associated with the finite-dimensional vector space $H^1(D,\mathcal J)$.  In particular, it has no nontrivial torsion.

The following criterion shows that semiampleness is detected by the normal
bundle $\OO_D(D)$ on the single effective divisor $D$. Since $U_D$ has no
nontrivial torsion, passing to a positive multiple cannot remove this
obstruction.

\begin{theorem}\label{thm:normal-obstruction}
The following conditions are equivalent:
\begin{enumerate}
\item $\kappa(X,D)>0$;
\item $D$ is semiample;
\item $h^0(X,\OO_X(D))\geq2$;
\item $\OO_D(D)\simeq\OO_D$;
\item $|D|$ contains a base-point-free pencil.
\end{enumerate}
If these conditions hold, then $|D|$ defines a morphism $f_D\colon X\to\PP^1$ with connected fibers, $D$ is a fiber of $f_D$, and
\[
   g(D)=1+\frac{r(\chi-2)}2.
\]
Moreover, $h^0(X,\OO_X(D))=2$.
\end{theorem}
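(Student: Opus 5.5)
We prove the cycle of implications (1)$\Rightarrow$(3)$\Rightarrow$(4)$\Rightarrow$(5)$\Rightarrow$(2)$\Rightarrow$(1), with (4) as the pivot. Throughout we use three facts about $D=D_S$. First, $D$ is nef. Second, $D^2=0$. Third, by Lemma~\ref{lem:effective-cycle} and Remark~\ref{rem:null-cycle}, $D$ is an effective divisor whose reduced support is a connected tree of rational curves, each of which has zero intersection with $D$.

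For (1)$\Rightarrow$(3): if $\kappa(X,D)>0$, then $h^0(mD)\geq2$ for some $m$. Take $D'\in|mD|$ different from $mD$. Then $mD$ and $D'$ are numerically proportional effective divisors with $D^2=0$. Write $D'=A+B$ with $B$ the common part with $mD$; every component $C$ of $A$ satisfies $C\cdot D=0$.

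We must rule out that all of this happens without $|D|$ itself moving. My first attempt uses the restriction sequence
\[0\to\OO_X\to\OO_X(D)\to\OO_D(D)\to0.\]
Since $D\cdot C=0$ for every component $C$ of $D$ and $D_{\mathrm{red}}$ is a tree of rational curves, $\OO_D(D)$ restricts trivially to $D_{\mathrm{red}}$. Hence $\OO_D(D)$ defines a class $\eta\in U_D$.

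The relevant statement is: if $\kappa(X,D)>0$, then $\OO_{mD}(mD)$ is trivial for suitable $m$ after passing to a fibration. Concretely, a base-point-free multiple $|mD|$ gives a fibration $f$ with $D$ supported in a fiber. Because $D$ is primitive and its support is connected, Zariski's lemma shows that $D$ is a rational multiple of a fiber. The fiber is $aD$ for an integer $a$, and $\OO_D(aD)$ is trivial. So $\eta^{a}$ is trivial, and since $U_D$ is torsion-free, $\eta$ itself is trivial. Thus (1) gives (4) directly, and I will organize the argument that way. This torsion-freeness step is the crux; it is already established in the paragraph before the theorem.

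For (4)$\Rightarrow$(3), use $q(X)=0$, so $H^1(\OO_X)=0$. Then $H^0(\OO_X(D))\to H^0(\OO_D)$ is surjective, and $H^0(\OO_D)$ contains the constants. A section of $\OO_X(D)$ mapping to $1$ is nonvanishing along $D$, which gives a second section and hence $h^0(\OO_X(D))\geq2$.

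For (3)$\Rightarrow$(5), take a pencil in $|D|$. Write its members as $M+\text{fixed part}$. Since $D$ is nef with $D^2=0$ and $D$ is primitive, a Zariski-lemma argument on the tree support shows the fixed part vanishes. Then $M^2=D^2=0$ forces the pencil to be base-point free.

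The implications (5)$\Rightarrow$(2)$\Rightarrow$(1) are standard. To see (3)$\Rightarrow$(4) directly, a base-point-free pencil exhibits $D$ as a fiber, so $\OO_D(D)\simeq\OO_D$.

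For the final assertions, take the Stein factorization of the pencil morphism. Primitivity of $D$ implies that $D$ is an entire fiber and that the base is $\PP^1$, so the fibers are connected. This yields $h^0(\OO_X(D))=2$, using $h^0(\OO_D)=1$ for a connected fiber together with the exact sequence above. Finally, adjunction gives $2g(D)-2=D\cdot K_X=r(\chi-2)$, which is the stated genus formula.
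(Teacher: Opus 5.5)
Your cycle does not close. The only step that uses (1) as a hypothesis is (1)$\Rightarrow$(4), and there you write ``a base-point-free multiple $|mD|$ gives a fibration.'' But $\kappa(X,D)>0$ only gives some $m$ with $h^0(X,\OO_X(mD))\geq2$. A base-point-free multiple is exactly (1)$\Rightarrow$(2), which you never prove. Your abandoned first attempt with $D'\in|mD|$ was heading there, and your remark that (5)$\Rightarrow$(2)$\Rightarrow$(1) is standard goes in the opposite direction.

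The paper fills this step with \cite[Lemma~5.1.2.2]{ADHL15}: on a surface with $q=0$, a nonzero effective nef divisor of square zero is either semiample or has Iitaka dimension zero. You can also fill it yourself by running the argument you sketch in (3)$\Rightarrow$(5) on $|mD|$ instead of $|D|$:
\begin{itemize}
\item Write $|mD|=|M|+Z$ with $M\neq0$ mobile. Since $D$ is nef and $D^2=0$, you get $D\cdot M=D\cdot Z=0$.
\item The Hodge index theorem then gives $M^2=0$. A mobile class of square zero is base-point free.
\item The Stein factorization of $\varphi_{|M|}$ is a fibration that contracts every component $C$ of $D$, because $C\cdot M=0$.
\item From here your own argument applies as written: connectedness of $D$, Zariski's lemma, primitivity giving $G=aD$, triviality of $\OO_G(G)$, and torsion-freeness of $U_D$.
\end{itemize}

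Two smaller points.
\begin{itemize}
\item In (3)$\Rightarrow$(5), the vanishing of the fixed part is really Hodge index plus primitivity, not a Zariski-lemma argument on the tree. One gets $M\equiv\lambda D$ with $0<\lambda\leq1$, and since $\Pic(X)$ is free by Lemma~\ref{lem:MW-zero}, primitivity forces $\lambda=1$ and hence $Z=0$.
\item In the final assertions, the Stein factorization base is $\PP^1$ because $q(X)=0$. Primitivity only forces the finite map to $\PP^1$ to have degree one.
\end{itemize}

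Once the gap is filled, your route differs from the paper's mainly in two places. You detour (4)$\Rightarrow$(3)$\Rightarrow$(5), while the paper gets (4)$\Rightarrow$(5) at once, since $s_D$ and the lift $t$ of $1$ have no common zero. For (2)$\Rightarrow$(4) you use Zariski's lemma, while the paper observes that the scheme-theoretic image of $D$ is Artinian local, so every line bundle on it is trivial.
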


\begin{proof}
By Remark \ref{rem:null-cycle}, the restriction $\OO_D(D)$ has degree zero on every irreducible component of $D_{\mathrm{red}}$, so its class belongs to $U_D$.

Suppose first that $D$ is semiample.  Choose $m>0$ such that $|mD|$ is base-point-free, and let $\varphi$ be the associated morphism.  Every component of $D$ is orthogonal to $D$, so $\varphi(D)$ is supported at one point.  More precisely, the scheme-theoretic image of the connected scheme $D$ is a connected zero-dimensional scheme and is therefore the spectrum of an Artinian local ring.  Every line bundle on such a scheme is trivial.  Writing $\OO_X(mD)=\varphi^*A$ for the hyperplane bundle on the image of $\varphi$, the restriction $\OO_D(mD)$ is the pullback of $A$ from this Artinian local scheme and is therefore trivial.  Thus the class of $\OO_D(D)$ is torsion in $U_D$, and hence is trivial.

Conversely, assume that $\OO_D(D)$ is trivial.  From
\[
   0\longrightarrow\OO_X\longrightarrow\OO_X(D)
   \longrightarrow\OO_D(D)\longrightarrow0
\]
and $H^1(X,\OO_X)=0$, the constant section of $\OO_D$ lifts to a section $t$ of $\OO_X(D)$.  Together with the canonical section $s_D$, it defines a base-point-free pencil: outside $D$ the section $s_D$ does not vanish, while $t|_D=1$.  Thus (4) implies (5), and (5) implies (3).

If $h^0(X,\OO_X(D))\geq2$, then $\kappa(X,D)=1$, since $D^2=0$ and $D$ is not numerically trivial.  A nonzero effective nef divisor of square zero on a smooth projective surface with $q=0$ is either semiample or has Iitaka dimension zero by \cite[Lemma~5.1.2.2]{ADHL15}.  This gives (3)$\Rightarrow$(1)$\Rightarrow$(2) and completes the equivalence.

The pencil gives a morphism $f\colon X\to\PP^1$.  In its Stein factorization $X\xrightarrow{g}B\xrightarrow{h}\PP^1$, the curve $B$ has genus zero because $q(X)=0$.  If $e=\deg h$, then $\OO_X(D)=g^*\OO_B(e)$.  The class $D$ is primitive, so $e=1$.  Hence the fibers are connected and $D$ is a fiber class.  Adjunction gives $2g(D)-2=D\cdot(D+K_X)=r(\chi-2)$.  Finally, $\OO_X(D)=f_D^*\OO_{\PP^1}(1)$ and $f_{D*}\OO_X=\OO_{\PP^1}$, so $h^0(X,\OO_X(D))=2$.
\end{proof}

\begin{definition}\label{def:eta}
The class $\eta_S=[\OO_{D_S}(D_S)]\in U_{D_S}$ is the \emph{isotropic semiampleness obstruction}.  By Theorem \ref{thm:normal-obstruction}, the ray $\R_{\geq0}H_S$ is semiample exactly when $\eta_S=0$.
\end{definition}

\begin{remark}
The obstruction cannot disappear after taking a positive multiple.  Indeed, if $h^0(X,\OO_X(mD_S))\geq2$ for some $m>0$, then $\kappa(X,D_S)>0$, and Theorem \ref{thm:normal-obstruction} shows that $D_S$ itself defines a base-point-free pencil.
\end{remark}

\subsection{Riemann--Roch and direct images}

\begin{proposition}\label{prop:isotropic-pushforward}
Let $D=D_S$ and $r=D\cdot F$.  For every $m\geq1$, one has $R^1\pi_*\OO_X(mD)=0$.  Hence $E_m=\pi_*\OO_X(mD)$ is a vector bundle on $\PP^1$ of rank $mr$ and degree
\begin{equation}\label{eq:Em-degree}
   \deg E_m=\chi\left(1-\frac{mr}{2}\right).
\end{equation}
If $E_m\simeq\bigoplus_{j=1}^{mr}\OO_{\PP^1}(a_{m,j})$, then
\begin{equation}\label{eq:splitting-sums}
   \sum_{j=1}^{mr}a_{m,j}=\chi\left(1-\frac{mr}{2}\right),
   \qquad
   h^0(X,\OO_X(mD))=\sum_{j=1}^{mr}\max\{a_{m,j}+1,0\}.
\end{equation}
Moreover, $\chi(\OO_X(mD))=\chi-mr(\chi-2)/2$.  If $D$ is not semiample, then the splitting of $E_m$ contains exactly one summand $\OO_{\PP^1}$ and every other summand has degree at most $-1$.
\end{proposition}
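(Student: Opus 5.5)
Restricting $\OO_X(mD)$ to the generic fiber reduces everything to line bundles of positive degree on elliptic curves, so the plan is to read off vanishing, rank and degree fiberwise and via Grothendieck--Riemann--Roch, and then use Theorem~\ref{thm:normal-obstruction} to constrain the splitting type.

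\emph{Vanishing of $R^1$.} Every fiber $F_t$ of $\pi$ is a Gorenstein curve of arithmetic genus one with $\omega_{F_t}\simeq\OO_{F_t}$. By Serre duality, $h^1(F_t,\OO_X(mD)|_{F_t})=h^0(F_t,\OO_X(-mD)|_{F_t})$. This is zero once $\OO_X(-mD)$ has nonpositive degree on every component of $F_t$ and negative total degree. For irreducible fibers this is clear, since $mD\cdot F=mr>0$. For a reducible fiber $F_i$, Remark~\ref{rem:null-cycle} says $D$ has degree $0$ on every component except $T_i$, where it has degree $r>0$. A section of a line bundle on a cycle of rational curves with nonpositive degrees everywhere and negative degree somewhere must be constant on the degree-zero components and vanish at the nodes; propagating along the cycle forces it to vanish identically. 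So $h^1$ vanishes on every fiber, and cohomology and base change gives $R^1\pi_*\OO_X(mD)=0$. It also shows $E_m$ is locally free of rank $h^0(F_t,\OO(mD)|_{F_t})=mr$ by Riemann--Roch on the genus-one fiber.

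\emph{Degree and Euler characteristics.} By Leray, $\chi(\OO_X(mD))=\chi(\PP^1,E_m)=\deg E_m+mr$. Riemann--Roch on $X$ gives
\[
\chi(\OO_X(mD))=\chi+\tfrac12\,mD\cdot(mD-K_X)=\chi-\tfrac{mr(\chi-2)}{2},
\]
using $D^2=0$ and $K_X\cdot D=r(\chi-2)$. Subtracting $mr$ yields $\deg E_m=\chi-mr\chi/2$, which is \eqref{eq:Em-degree}. The first formula in \eqref{eq:splitting-sums} is then the splitting, and the second is $h^0(X,\OO_X(mD))=h^0(\PP^1,E_m)$ applied summand by summand.

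\emph{The non-semiample case.} This is the only step needing input beyond bookkeeping. If $D$ is not semiample, Theorem~\ref{thm:normal-obstruction} together with its following remark gives $\kappa(X,D)=0$. Since $D$ is effective, $h^0(\OO_X(mD))=1$ for all $m\ge1$. By \eqref{eq:splitting-sums}, exactly one summand has $a_{m,j}=0$ and all others satisfy $a_{m,j}\le -1$. A summand with $a_{m,j}\ge 1$ would contribute at least $2$, so it cannot occur. The main care is the fiberwise vanishing on the $I_n$ cycles; everything else is formal.
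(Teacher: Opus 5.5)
Your proposal is correct and follows the paper's proof essentially step for step. You use fiberwise Serre duality on the $I_n$ cycles (vanishing on $T_i$ and propagating across the degree-zero components) with cohomology and base change, then Riemann--Roch together with $\chi(E_m)=\operatorname{rk}E_m+\deg E_m$, and in the non-semiample case $\kappa(X,D)=0$ from Theorem~\ref{thm:normal-obstruction} (the equivalence (1)$\Leftrightarrow$(2) alone suffices) to force $h^0(X,\OO_X(mD))=1$ and hence the stated splitting.
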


\begin{proof}
On every fiber $G$ of $\pi$, the restriction $\OO_G(mD)$ has nonnegative degree on each irreducible component and positive total degree $mr$.  On a smooth elliptic fiber, this gives $H^1(G,\OO_G(mD))=0$.  If $G$ is reducible, Serre duality and $\omega_G\simeq\OO_G$ reduce the claim to the absence of sections of $\OO_G(-mD)$.  Such a section vanishes on the unique component of negative degree and then vanishes successively on every adjacent degree-zero component.  The same conclusion is immediate on an irreducible nodal fiber.  Cohomology and base change now give $R^1\pi_*\OO_X(mD)=0$ and $\operatorname{rk}E_m=mr$.

Since $(K_X-mD)\cdot F=-mr<0$, one also has $H^2(X,\OO_X(mD))=0$.  Riemann--Roch gives $\chi(\OO_X(mD))=\chi-mr(\chi-2)/2$.  On $\PP^1$, the identity $\chi(E_m)=\operatorname{rk}E_m+\deg E_m$ yields \eqref{eq:Em-degree}, and the splitting formulas follow.

If $D$ is not semiample,  Theorem \ref{thm:normal-obstruction} gives $\kappa(X,D)=0$.  Since every $mD$ is effective, it follows that $h^0(X,\OO_X(mD))=1$ for every $m\geq1$.  The asserted form of the splitting is then immediate from Birkhoff--Grothendieck.
\end{proof}

\begin{remark}
A positive summand $\OO_{\PP^1}(a)$ with $a>0$ is sufficient, but not necessary, for $h^0(X,\OO_X(mD))\geq2$: two summands of degree zero also suffice.  The precise criterion is \eqref{eq:splitting-sums}.
\end{remark}
\subsection{Bounded cohomology property}
\label{Sect-BCP}
Throughout this subsection, a curve means a reduced and irreducible curve.
A very open problem in the theory of algebraic surfaces is the following bounded negativity conjecture (BNC for short).
\begin{conjecture}
\cite[Conjecture 1.1]{Bauer et al. 13}
Let $X$ be a smooth projective surface over $\bC$.
Then there exists an integer $b=b(X)\ge0$ such that $C^2\ge-b$ for every curve $C\subseteq X$.
\end{conjecture}
Bauer et al. \cite[Conjecture 2.5.3]{Bauer et al. 12} introduced the following terminology.
\begin{definition}
\label{BCP-defn}
A smooth projective surface $X$ is said to satisfy the bounded cohomology property (BCP for short), if there exists a constant $c_X>0$ such that $h^1(\mathcal O_X(C))\le c_Xh^0 (\mathcal O_X(C))$ for every curve $C$ on $X$.
\end{definition}
It turns out by Ciliberto et al. \cite{Ciliberto et al. 2017} that the BCP implies the BNC.
Note that a smooth projective surface satisfies the BCP provided that  $-K_X$ is psuedoeffective (cf \cite[Proposition 1.4]{HL26}).
In general, it is  an open problem to classify smooth projective surfaces with the BCP as posted by Ciliberto et al.\cite[Question 6]{Ciliberto et al. 2017}.
\begin{problem}
Classify all smooth projective surfaces with the BCP.
\end{problem}
Hua and Li \cite{HL26, Li21, Li23, Li26a} characterized smooth projective surfaces  $X$  with the BCP where either (i) $\rho(X)=2$ and $\NE(X)$ is rational polyhedral or (ii) $X$ is a geometrically ruled surface.
The classification of the BCP for surfaces with higher Picard numbers remains a widely open problem.
In this setting, one is generally forced to assume that $\NE(X)$ is rational polyhedral (cf. \cite[Question 1.7]{HL26}).
Recently, Hua and Li proved in  \cite[Theorem 1.9]{HL26} that every Mori dream surface satisfies the BCP.
Their proof hinges on the condition that every nef extremal ray has positive Iitaka dimension, see \cite[Remark 1.7]{Li26a} and also \cite[Theorem 1.6(a)]{Li26b} as follows. 
\begin{theorem}
\label{thm-BCP}
Let $X$ be a smooth projective surface with $q(X)=0$ such that $\NE(X)$ is rational polyhedral.
Let $\Nef(X)=\sum_{i=1}^n\bR_{\ge0}[H_i]$  where each $H_i$ is an effective divisor on $X$.
Suppose each $H_i$ has $\kappa(X,H_i)\ge1$.
Then $X$ satisfies the BCP.
\end{theorem}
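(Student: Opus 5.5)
The plan is to split the curves $C\subseteq X$ by the sign of $C^2$. In both cases I bound $h^1$ by a linear function of $K_X\cdot C$. In the nonnegative case I then bound $h^0$ from below by a linear function of the coordinates of $[C]$ in the nef cone.

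\emph{Negative curves.} Every curve with $C^2<0$ spans an extremal ray of $\NE(X)$. Since $\NE(X)$ is rational polyhedral, there are only finitely many such curves. For each of them $h^0(\OO_X(C))\geq1$, so any constant $c_X\geq\max h^1(\OO_X(C))$ over this finite set handles them.

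\emph{Curves with $C^2\geq0$: upper bound for $h^1$.} Such a $C$ is nef. From $0\to\OO_X\to\OO_X(C)\to\OO_C(C)\to0$ and $q(X)=0$, the map $H^1(X,\OO_X(C))\to H^1(C,\OO_C(C))$ is injective. Duality on the integral Gorenstein curve $C$ gives
\[
h^1(\OO_C(C))=h^0(C,\omega_C(-C))=h^0(C,\OO_C(K_X)).
\]
A line bundle of degree $d$ on an integral curve has at most $\max\{d+1,0\}$ sections. Hence $h^1(\OO_X(C))\leq\max\{K_X\cdot C+1,0\}$. Now write $[C]=\sum_i a_iH_i$ with $a_i\geq0$, using a simplicial subdivision of $\Nef(X)$ so that the coordinates are unique. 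With $M=\max_i|K_X\cdot H_i|$, this gives
\[
h^1(\OO_X(C))\leq M\sum_i a_i+1 .
\]

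\emph{Curves with $C^2\geq0$: lower bound for $h^0$.} By hypothesis, for each $i$ there is $m_i>0$ with $h^0(m_iH_i)\geq2$. For effective divisors $A,B$ one has $h^0(A+B)\geq h^0(A)+h^0(B)-1$, by multiplying a fixed section of one with the sections of the other. Put $b_i=\lfloor a_i/m_i\rfloor-c_0$, clipped at $0$, and $N=\sum_i b_im_iH_i$. If $C-N$ is effective, then
\[
h^0(\OO_X(C))\geq h^0(\OO_X(N))\geq1+\sum_i b_i\geq c'\sum_i a_i-c'' .
\]
Combined with the bound $h^1\leq M\sum_i a_i+1$ and $h^0\geq1$, this yields a uniform constant $c_X$.

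\emph{Main obstacle.} The hardest step is the effectivity of the remainder $R=C-N$. This remainder lies in the nef cone, and its coordinates $a_i-b_im_i$ lie in a bounded range. My first attempt is to use that the integral points of the rational polyhedral cone $\Nef(X)$ form a finitely generated semigroup (Gordan's lemma). Then the possible remainders, up to adding the generators $m_iH_i$, form finitely many classes $R_1,\dots,R_k$. For each $R_j$ I would fix some $t_j$ so that $R_j+t_j\sum_i m_iH_i$ is effective. This is possible because that class is nef; if it is big, some multiple is effective by Riemann--Roch; otherwise it lies on a face spanned by the $H_i$, and there the effective multiples $m_iH_i$ provide effectivity. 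Taking $c_0\geq\max_j t_j$ absorbs the shift and makes $C-N$ effective. If this fails uniformly, the fallback is Riemann--Roch directly on $R$: $h^2(R)=h^0(K_X-R)$ vanishes once $(K_X-R)\cdot A<0$ for a fixed ample $A$. The finitely many residual classes with $(K_X-R)\cdot A\geq0$ would then be treated one at a time.
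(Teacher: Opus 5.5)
The paper does not prove this theorem; it only quotes it from Li's work. So your argument has to stand on its own. Its frame is fine. Negative curves are finitely many. For $C^2\geq0$, the bound $h^1(\OO_X(C))\leq\max\{K_X\cdot C+1,0\}$ obtained from $q=0$ and duality on $C$ is correct. A lower bound $h^0(\OO_X(C))\geq c'\sum_ia_i-c''$ would then finish the proof. (Justify $h^0(A+B)\geq h^0(A)+h^0(B)-1$ by the finiteness of the fibres of $|A|\times|B|\to|A+B|$; multiplying by one fixed section only gives $\max\{h^0(A),h^0(B)\}$.)

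\textbf{The gap.} The gap is the step you flag, and your resolution does not close it. Write $C=R_j+\sum_ik_im_iH_i$ with $k_i=\lfloor a_i/m_i\rfloor$. Because of the clipping, your remainder is $C-N=R_j+\sum_i\min\{c_0,k_i\}m_iH_i$, not $R_j+c_0\sum_im_iH_i$. Your uniform shift therefore applies only when every $k_i\geq c_0$. In that case it does work: $\sum_im_iH_i$ lies in the interior of a full-dimensional subcone, hence is ample, so $R_j+t\sum_im_iH_i$ is effective for $t\gg0$ by Serre vanishing and Riemann--Roch. Note that you need the class itself to be effective, not some multiple of it.

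For a curve close to an isotropic ray $\R_{\geq0}H_1$, you need instead that $R_j+c_0m_1H_1$ is effective, and nothing you prove gives this:
\begin{itemize}
\item $\chi(\OO_X(R_j+tm_1H_1))$ is affine in $t$ with slope $\tfrac{m_1}{2}(2R_j\cdot H_1-K_X\cdot H_1)$, which may be $\leq0$.
\item Your fallback only kills $h^2$, which does not make $\chi$ positive.
\item If $R_j$ has no sections on the general fibre of the fibration defined by the semiample class $H_1$, then no shift $R_j+tm_1H_1$ is ever effective.
\end{itemize}
Mixed cases, with several clipped coordinates, are similar. This is exactly where the hypothesis $\kappa(X,H_1)\geq1$ must do real work, so the key lower bound on $h^0$ is not proved as written.

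\textbf{The missing idea.} Use the fact that $C$ itself is effective, instead of trying to make every remainder effective. For a simplicial subcone $\sigma$, put $S=\{L\in\Pic(X):[L]\in\sigma\}$ and $S'=\bigoplus_i\Z_{\geq0}m_iH_i$.
\begin{itemize}
\item Your parallelepiped argument shows that $S$ is a disjoint union of finitely many translates $R_j+S'$, each a copy of $\Z_{\geq0}^{\rho(X)}$. Torsion is harmless, since $q=0$ makes $\Pic(X)=\operatorname{NS}(X)$ finitely generated.
\item The effective classes in $S$ form an $S'$-stable subset, because each $m_iH_i$ is effective. By Dickson's lemma they are generated by finitely many effective classes $E_1,\dots,E_p$.
\item Every curve $C$ with $[C]\in\sigma$ is then $E_l+\sum_ik_im_iH_i$ with $k_i\geq0$. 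Hence $h^0(\OO_X(C))\geq1+\sum_ik_i\geq1+(\sum_ia_i-A_0)/\max_im_i$, where $A_0$ depends only on the $E_l$.
\end{itemize}
No clipping is needed, and with this replacement your proof goes through. Alternatively, near an isotropic ray you can push forward along the fibration $f$ defined by $H_1$, using $h^0(\OO_X(C))=h^0(\PP^1,f_*\OO_X(C))$ and $f_*\OO_X(C)\neq0$.
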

Let $X$ be as in the setting of Theorem \ref{NE-thm}.
It is straightforward to see that $X$ satisfies the BCP whenever $\delta(\pi)<\chi$ (cf. \cite[Corollary 1.11]{Li26b}).
When $\delta(\pi)=\chi$, Theorem \ref{thm:all-even} and Proposition \ref{prop:odd-example-bcp} provide new evidences for such surfaces $X$ satisfying the BCP and answer \cite[Question 3.2]{HL26}.
 These surfaces need not be Mori dream surfaces; there even exist some nonvertical isotropic extremal rays of $\Nef(X)$ spanned by an effective divisor $D$ with $\kappa(X,D)=0$.
\section{Surfaces with only even reducible fibers}
\label{Sect-alleven}
\subsection{Nef isotropic divisors on surfaces}
\begin{theorem}
\label{thm:all-even}
Assume that $\MW(\pi)$ is finite, $\delta(\pi)=\chi$ and that every reducible fiber is of type $I_{2m_i}$.  Let $D$ be the primitive generator of the unique nonvertical isotropic ray.  Then
\begin{equation}
\label{eq:D-even}
D=2C_0+\sum_{i=1}^s\Bigg(
 m_iC_{i,0}
 +\sum_{j=1}^{m_i-1}(m_i-j)C_{i,j}
 +\sum_{j=m_i+1}^{2m_i-1}(j-m_i)C_{i,j}
\Bigg),
\end{equation}
where the omitted component of $F_i$ is $C_{i,m_i}$.  The divisor $D$ is base-point-free and defines a second fibration $f_D\colon X\to\PP^1$ whose general fiber has genus $\chi-1$.
As a result, $X$ satisfies the BCP.
\end{theorem}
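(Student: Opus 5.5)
The plan is to make $D$ explicit, use the direct-image degree formula of Proposition~\ref{prop:isotropic-pushforward} to force $h^0(X,\OO_X(D))\ge2$, and then invoke Theorem~\ref{thm:normal-obstruction} and Theorem~\ref{thm-BCP}.

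\emph{Explicit form of $D$.} Since every $n_i=2m_i$ is even, each cycle has exactly one component of maximal cyclic distance, namely $C_{i,m_i}$. So there is a unique isotropic selection $S$ and a unique nonvertical isotropic ray. By Proposition~\ref{prop:primitive-generator}, every $q_i=2$, hence $r=2$ and $D=D_S=2H_S$. By Lemma~\ref{lem:effective-cycle}, $H_S$ is represented by $C_0+\sum_iP_{2m_i,m_i}$. Substituting $n=2m$ and $k=m$ into \eqref{eq:Pnk} gives:
\begin{itemize}
\item coefficient $m/2$ on the identity component;
\item coefficient $(m-j)/2$ on $C_j$ for $j<m$;
\item coefficient $(j-m)/2$ on $C_j$ for $j>m$.
\end{itemize}
Doubling these yields \eqref{eq:D-even}, which is an effective integral divisor with $D\cdot F=2$ and $D^2=0$.

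\emph{Semiampleness.} Apply Proposition~\ref{prop:isotropic-pushforward} with $m=1$ and $r=2$. Then $E_1=\pi_*\OO_X(D)$ is a rank-two bundle of degree $\chi(1-\tfrac{2}{2})=0$, so $E_1\simeq\OO_{\PP^1}(a)\oplus\OO_{\PP^1}(-a)$ with $a\ge0$. If $D$ were not semiample, the same proposition would force the splitting $\OO_{\PP^1}\oplus\OO_{\PP^1}(b)$ with $b\le-1$. The degree would then be $b\le -1\neq 0$, a contradiction. Equivalently, \eqref{eq:splitting-sums} gives $h^0(X,\OO_X(D))=(a+1)+\max\{1-a,0\}\ge2$.

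Theorem~\ref{thm:normal-obstruction} (3)$\Rightarrow$(2),(5) then shows that $D$ is base-point-free. It also shows that $|D|$ defines a fibration $f_D\colon X\to\PP^1$ with connected fibers and $D$ a fiber, of genus $g(D)=1+\tfrac{2(\chi-2)}{2}=\chi-1$. I expect this to be the only real point of the proof, and it is the step that fails for odd fibers: there $r\ge3$, and the negative degree $\chi(1-r/2)$ allows the balanced splitting seen in Theorem~\ref{thm-odd}.

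\emph{The bounded cohomology property.} By Theorem~\ref{NE-thm}(1), $\NE(X)=\cC_\pi$ is rational polyhedral. Hence $\Nef(X)$ is the dual cone, and its extremal rays fall into three kinds:
\begin{itemize}
\item the vertical ray $\R_{\ge0}F$, where $F$ is semiample with $\kappa=1$;
\item the nonvertical rays $H_S$ with $H_S^2=\chi-\delta(S)>0$;
\item the single isotropic ray $\R_{\ge0}D$.
\end{itemize}
A vertical nef class $V$ has $V\cdot F=0$, so it is supported on fibers. Nefness and negative semidefiniteness of fiber components then force $V$ to be a multiple of $F$, which justifies the first item. For the second kind, each $H_S$ has an effective multiple by Lemma~\ref{lem:effective-cycle} and is big and nef, so $\kappa=2$. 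The third kind has $\kappa(X,D)=1$ by the previous step. All hypotheses of Theorem~\ref{thm-BCP} are therefore satisfied, and $X$ has the BCP. The remaining verifications are the routine coefficient check in the first step and confirming that the generator list of $\Nef(X)$ above is complete; the latter is exactly \cite[Propositions~4.1 and~4.3]{LLL26} together with the observation on vertical classes.
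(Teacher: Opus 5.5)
Your proposal is correct and follows the paper's proof: the paper obtains $h^0(X,\OO_X(D))\geq 2$ directly from Riemann--Roch ($\chi(\OO_X(D))=2$ and $h^2(X,\OO_X(D))=0$), which is exactly the computation behind the rank-two, degree-zero direct image you invoke, and then concludes via Theorem~\ref{thm:normal-obstruction} and Theorem~\ref{thm-BCP} just as you do. Your explicit enumeration of the extremal nef rays in the BCP step only spells out what the paper asserts.
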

\begin{proof}
The component of maximal distance in an even cycle is unique.  By Proposition \ref{prop:primitive-generator}, the primitive index is $r=2$, and \eqref{eq:D-even} follows from \eqref{eq:Pnk}. 
 Since $D\cdot F=2$, the divisor $K_X-D$ has negative intersection with $F$, so $H^2(X,\OO_X(D))=0$.  By Riemann-Roch theorem, we have
\begin{equation*}
\begin{split}
\chi(\OO_X(D)) & =\chi-\frac{K_X\cdot D}{2}
\\&=2,
\end{split}
\end{equation*}
where we use $K_X\sim (\chi-2)F$.
Thus, $h^0(X,\OO_X(D))=2+h^1(X,\OO_X(D))\geq2$.  The result follows from Theorem \ref{thm:normal-obstruction}; the genus is $1+2(\chi-2)/2=\chi-1$.
Theorem \ref{thm-BCP} implies that $X$ satisfies the BCP.
\end{proof}

\begin{corollary}\label{cor:three-fibers}
\Cref{prob:isotropic} has an affirmative answer whenever $\pi$ has at most three reducible fibers.
\end{corollary}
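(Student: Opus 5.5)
The corollary should follow directly from Corollary \ref{cor:at-most-three} and Theorem \ref{thm:all-even}. Problem \ref{prob:isotropic} assumes that $\MW(\pi)$ is finite and $\delta(\pi)=\chi$. We work in the semistable setting with at least one reducible fiber, so every reducible fiber has type $I_{n_i}$. Under the hypothesis that $\pi$ has at most three reducible fibers, Corollary \ref{cor:at-most-three} shows that every $n_i$ is even, say $n_i=2m_i$.

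We then invoke Theorem \ref{thm:all-even}. With all reducible fibers even, the discussion after Proposition 4.3 of \cite{LLL26} recorded in Section \ref{Pre} gives exactly one nonvertical isotropic extremal ray of $\Nef(X)$, since $2^o=1$ when $o=0$. Its primitive generator $D$ is given by \eqref{eq:D-even}, and Theorem \ref{thm:all-even} shows that $D$ is base-point-free. In particular $D$ is semiample and $\kappa(X,D)=1>0$.

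Every $D$ as in Problem \ref{prob:isotropic} spans a nonvertical isotropic extremal ray of $\Nef(X)$. By the above it is a positive multiple of this unique generator, so it is semiample as well. This answers Problem \ref{prob:isotropic} affirmatively.

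I do not expect a genuine obstacle here; the only point needing care is applicability. First, the equality $\cC_\pi=\NE(X)$ at $\delta(\pi)=\chi$, from Theorem \ref{NE-thm}(1), is what ensures that the rays $\R_{\ge0}H_S$ are all the nonvertical extremal rays of $\Nef(X)$. Second, the parity argument in Corollary \ref{cor:at-most-three}, via Lemma \ref{lem:odd-reciprocals}, relies on $\delta(\pi)=\chi$ and $\chi\ge3$, and both hold under the hypotheses of Problem \ref{prob:isotropic}.
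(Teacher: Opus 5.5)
Your proposal is correct and is essentially the paper's proof: the paper also deduces the corollary directly from Corollary~\ref{cor:at-most-three} (which forces all reducible fibers to be even when $\delta(\pi)=\chi$ and there are at most three of them) and Theorem~\ref{thm:all-even} (which shows that the unique nonvertical isotropic ray is generated by a base-point-free divisor). The applicability checks you add are consistent with the paper's hypotheses and conclusions.
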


\begin{proof}
This follows from Corollary \ref{cor:at-most-three} and Theorem \ref{thm:all-even}.
\end{proof}

The one-fiber boundary is particularly rigid.  The formula for $\delta(\pi)$ in \cite[Proof of Corollary~1.4]{LLL26} shows that the equality $\delta(\pi)=\chi$ for a unique reducible fiber $I_n$ forces $n=4\chi$.

\begin{corollary}\label{cor:I4chi}
Assume that $\delta(\pi)=\chi$ and that the unique reducible fiber is $F_1=C_{1,0}+\cdots+C_{1,4\chi-1}$.  The primitive isotropic generator is
\begin{equation}\label{eq:D-I4chi}
D=2C_0+2\chi C_{1,0}
 +\sum_{j=1}^{2\chi-1}(2\chi-j)C_{1,j}
 +\sum_{j=2\chi+1}^{4\chi-1}(j-2\chi)C_{1,j}.
\end{equation}
The component $C_{1,2\chi}$ is omitted, and $D$ is a fiber of a base-point-free pencil of genus $\chi-1$.
\end{corollary}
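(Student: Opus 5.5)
The plan is to specialize Theorem~\ref{thm:all-even} to a single reducible fiber. First I will confirm that the unique reducible fiber has type $I_{4\chi}$. For one fiber of type $I_n$ we have $\delta(\pi)=\lfloor n^2/4\rfloor/n$. If $n$ is even this equals $n/4$. If $n$ is odd it equals $(n^2-1)/(4n)$, which is not an integer, so it cannot equal $\chi$. This agrees with Corollary~\ref{cor:at-most-three}: a single fiber must be even. Hence $\delta(\pi)=\chi$ forces $n=4\chi$, which is exactly the fact quoted from \cite[Proof of Corollary~1.4]{LLL26}. We are then in the setting of Theorem~\ref{thm:all-even} with $s=1$ and $m_1=2\chi$. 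By Lemma~\ref{lem:MW-zero}, $\MW(\pi)=0$.

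Next I will read off the generator. Substituting $s=1$ and $m_1=2\chi$ into \eqref{eq:D-even} gives the coefficients
\[
2\text{ on } C_0,\qquad 2\chi \text{ on } C_{1,0},\qquad 2\chi-j \text{ on } C_{1,j}\ (1\le j\le 2\chi-1),\qquad j-2\chi \text{ on } C_{1,j}\ (2\chi+1\le j\le 4\chi-1).
\]
This is precisely \eqref{eq:D-I4chi}. The component $C_{1,2\chi}$ is the unique component of maximal cyclic distance $2\chi$, so it is the omitted one.

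As a sanity check, I will compare with \eqref{eq:Pnk} for $n=4\chi$ and $k=2\chi$. This formula gives coefficient $k(n-k)/n=\chi$ on the identity component, and then Proposition~\ref{prop:primitive-generator} multiplies by $r=2$, giving $2\chi$. Likewise $(k-j)(n-k)/n=(2\chi-j)/2$ doubles to $2\chi-j$, and $k(j-k)/n=(j-2\chi)/2$ doubles to $j-2\chi$. The coefficient of $C_0$ doubles from $1$ to $2$.

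Finally, Theorem~\ref{thm:all-even} gives that $D$ is base-point-free and defines $f_D\colon X\to\PP^1$ with connected fibers. By Theorem~\ref{thm:normal-obstruction}, $D$ is a fiber of this pencil and has genus $1+r(\chi-2)/2=\chi-1$. The only step needing any care is excluding an odd $n$, and the non-integrality observation above settles it.
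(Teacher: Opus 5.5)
Your proposal is correct and takes the same route as the paper, whose proof is simply to apply Theorem~\ref{thm:all-even} with $s=1$ and $m_1=2\chi$. Your extra checks are consistent but not needed: ruling out odd $n$ is redundant because the statement already fixes $n=4\chi$, and the recomputation of the coefficients from \eqref{eq:Pnk} with $r=2$ duplicates what Theorem~\ref{thm:all-even} already supplies.
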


\begin{proof}
Apply Theorem  \ref{thm:all-even} with $m_1=2\chi$.
\end{proof}

\begin{remark}
The uniform Mori dream range $N(\pi)\leq2\chi+3$ lies strictly inside the region $\delta(\pi)<\chi$; see \cite[Proof of Lemma~4.5]{LLL26}. 
Theorem~\ref{thm:all-I2-mds} below proves that Mori dream surfaces do occur
on the boundary $\delta(\pi)=\chi$. For other configurations, the Mori dream
property also requires semiampleness of the big extremal nef rays.
\end{remark}
\subsection{Mori dream surfaces with reducible fibers of type \texorpdfstring{$I_2$}{I2}}
\label{Sect-all-I_2}
We now specialize to the case in which every reducible fiber has two components.  Write
$F_i=A_i+E_i$, where $A_i=C_{i,0}$ is the identity component and $E_i=C_{i,1}$ is the other component.  A selection is determined by a subset $J\subseteq\{1,\ldots,s\}$: for $i\in J$ we select $E_i$, whereas for $i\notin J$ we select $A_i$.  We denote the corresponding supporting class by $H_J$.

\begin{theorem}\label{thm:all-I2-mds}
Assume that $\MW(\pi)$ is finite, every reducible fiber of $\pi$ is of type $I_2$, and $s\leq2\chi$.  Then $\OO_X(2H_J)$ is globally generated for every subset $J\subseteq\{1,\ldots,s\}$.  Consequently, $X$ is a Mori dream surface.
In particular, if $\delta(\pi)=\chi$, then $s=2\chi$ and $X$ is a Mori dream surface.  The unique nonvertical isotropic extremal ray is generated by a semiample divisor defining a second fibration of genus $\chi-1$.
\end{theorem}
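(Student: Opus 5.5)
The plan is to reduce everything to a computation with the rank-two bundle $\pi_*\OO_X(2H_J)$ on $\PP^1$. Put $L_J=2H_J$. For $A_1$ the fundamental weight is $\omega_{E_i}=E_i/2$. Hence
\[
L_J=2C_0+2\chi F-\sum_{i\in J}E_i\sim 2C_0+\sum_{i\in J}A_i+(2\chi-|J|)F .
\]
Since $|J|\le s\le 2\chi$, this is an effective integral divisor. Also $\delta(\pi)=s/2\le\chi$, so Theorem \ref{NE-thm}(1) gives $\NE(X)=\cC_\pi$. The nef cone is therefore spanned by $F$ and the classes $H_J$. Once every $L_J$ is globally generated, every extremal nef ray is semiample, and \cite[Corollary~2.6]{AHL10} (with $q(X)=0$) yields the Mori dream property. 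For the boundary statement: $\delta(\pi)=\chi$ forces $s=2\chi$, and the isotropic ray is $J=\{1,\ldots,s\}$. Its generator $D=2H_J$ is then the divisor of Theorem \ref{thm:all-even} with all $m_i=1$, which already gives a genus $\chi-1$ pencil.

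For global generation I would argue fiberwise. First, as in the proof of Proposition \ref{prop:isotropic-pushforward}, the bundle $L_J$ has nonnegative degree on every fiber component and total degree $L_J\cdot F=2$. So $R^1\pi_*\OO_X(L_J)=R^1\pi_*\OO_X(L_J-F)=0$, and $V_J=\pi_*\OO_X(L_J)$ is locally free of rank $2$.

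Next I would check that $L_J|_G$ is globally generated on every fiber $G$:
\begin{itemize}
\item on a smooth or nodal fiber, a degree-two bundle on an arithmetic-genus-one curve is generated;
\item on an $I_2$ fiber the multidegree is $(1,1)$, $(2,0)$ or $(0,2)$, and in each case $h^0=2$ and a direct check on the two-component cycle gives generation.
\end{itemize}
Cohomology and base change then identify $V_J\otimes k(p)$ with $H^0(G,L_J|_G)$. Consequently $\pi^*V_J\to\OO_X(L_J)$ is surjective, and it remains to see that $V_J$ is globally generated. Writing $V_J\simeq\OO(a)\oplus\OO(b)$, this means exactly $a,b\ge0$.

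To compute $V_J$, start from $\pi_*\OO_X(2C_0)\simeq\OO\oplus\OO(-2\chi)$, using the Weierstrass description with fundamental line bundle of degree $\chi$. Twisting gives $\pi_*\OO_X(2C_0+2\chi F)\simeq\OO(2\chi)\oplus\OO$. Subtracting each $E_i$, $i\in J$, imposes one linear condition at the point $p_i=\pi(F_i)$, namely vanishing along $E_i$. Hence $V_J$ is the kernel of a surjection
\[
\OO(2\chi)\oplus\OO\longrightarrow\bigoplus_{i\in J}k(p_i),
\]
and so $a+b=2\chi-|J|\ge0$, consistent with \eqref{eq:Em-degree}.

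The main obstacle is showing that the splitting is nonnegative, i.e.\ that neither summand drops below degree $0$. The bad case is when the conditions at the $p_i$ involve the $\OO$-summand nontrivially, which could push a summand into negative degree. My first attempt would be to identify these functionals explicitly. Near $F_i$, vanishing on $E_i$ is a condition on the $\OO(2\chi)$-part (the function $x$ in Weierstrass coordinates, a section of $\OO(2\chi)$) relative to the constant. I expect the kernel to contain $\OO(2\chi-|J|)$ together with a section corresponding to the effective divisor $2C_0+\sum_{i\in J}A_i+(2\chi-|J|)F$. More precisely, I would exhibit two sections of $V_J$ that are independent at every point:
\begin{itemize}
\item the section given by this effective divisor;
\item the section $2C_0+2\chi F-\sum_{i\in J}F_i$ composed with sections of $\OO(2\chi-|J|)$.
\end{itemize}
If this fails, I would fall back on an induction on $|J|$: each step removes one condition, and one tracks that the degree-$0$ summand is never killed as long as $|J|\le 2\chi$. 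As an alternative, one can prove $H^1(X,\OO_X(L_J-F))=0$ directly, which is equivalent to $a,b\ge0$ and also gives surjectivity of restriction to every fiber.
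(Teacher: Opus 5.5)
Your reduction is sound. The formula $2H_J\sim 2C_0+\sum_{i\in J}A_i+dF$ with $d=2\chi-|J|$ is right. So are $R^1\pi_*=0$, the fiberwise generation, the reduction to $a,b\ge 0$ for $V_J=\pi_*\OO_X(2H_J)\simeq\OO(a)\oplus\OO(b)$, and the elementary-transformation description giving $a+b=d$. The Mori dream and boundary conclusions then follow as you say.

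\textbf{The gap.} The step you call the main obstacle is never proved, and the explicit construction you offer for it fails. Put $M_J=2C_0+\sum_{i\in J}A_i$. Both of your sections have the form $s_{M_J}\cdot\pi^*\sigma$ with $\sigma\in H^0(\PP^1,\OO(d))$:
\begin{itemize}
\item the first is this with $\sigma$ vanishing on the chosen $d$ fibers;
\item the second is $s_{2C_0}\pi^*\sigma$, pushed into $\OO_X(2H_J)$ by multiplying with the canonical section of $\sum_{i\in J}A_i$.
\end{itemize}
So both lie in the single rank-one subsheaf $s_{M_J}\cdot\pi^*\OO(d)\simeq\OO(d)\subset V_J$. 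They are proportional at every point of $\PP^1$, and both vanish along $2C_0$ and along each $A_i$, $i\in J$. Since $H_J\cdot C_0=0$, the bundle $\OO_X(2H_J)$ is trivial on $C_0$, so generation along $C_0$needs a section not vanishing there. That is exactly the section you never produce.

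The fallbacks do not close the gap either. The vanishing $H^1(X,\OO_X(2H_J-F))=0$ is only a restatement of $a,b\ge0$. An induction on $|J|$ needs to know, at each step, that the new functional at $p_i$ is nonzero on the $\OO(d)$-summand. If it were not, the elementary modification would turn $\OO(d)\oplus\OO$ into $\OO(d)\oplus\OO(-1)$.

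\textbf{The missing observation.} Your worry about the conditions involving the $\OO$-summand is harmless. In $\pi_*\OO_X(2C_0+2\chi F)\simeq\OO(2\chi)\cdot 1\oplus\OO\cdot x$, the $\OO(2\chi)$-part is $1$ times sections from the base, and the coefficient of $x$ is a constant (you have these roles swapped). In a local trivialization, the condition at $p_i$ is $(\tau,\beta)\mapsto\tau(p_i)+\beta\,x(\nu_i)$, where $\nu_i$ is the node. This is nonzero on the first summand, because the constant section $1$ does not vanish along $E_i$. The argument then runs as follows.
\begin{itemize}
\item $V_J\cap\OO(2\chi)=\OO(2\chi)(-\sum_{i\in J}p_i)\simeq\OO(d)$.
\item This subsheaf is saturated, since the quotient embeds in the torsion-free summand $\OO$.
\item That quotient has degree $(a+b)-d=0$, so it equals $\OO$.
\item Since $\operatorname{Ext}^1(\OO,\OO(d))=H^1(\OO(d))=0$, one gets $V_J\simeq\OO(d)\oplus\OO$.
\end{itemize}
The section you are missing, spanning the $\OO$-summand, is $x+\alpha$. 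Here $\alpha\in H^0(\OO(2\chi))$ interpolates the values $-x(\nu_i)$ at the $|J|\le2\chi$ points $p_i$.

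The paper runs the same argument from the other side. The canonical section $s_{M_J}$ is nonzero on every fiber because $M_J$ contains no whole fiber, so it gives a line subbundle $\OO\subset\pi_*\OO_X(M_J)$. From $\chi(\OO_X(M_J))=2-d$ the paper gets $\deg\pi_*\OO_X(M_J)=-d$. It then splits $0\to\OO\to\pi_*\OO_X(M_J)\to\OO(-d)\to0$ using $H^1(\OO(d))=0$, and twists by $\OO(d)$.
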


\begin{proof}
Since each reducible fiber is of type $I_2$, one has $\delta(\pi)=s/2$.  Thus the assumption $s\leq2\chi$ gives $\delta(\pi)\leq\chi$, and Theorem \ref{NE-thm} (1)  shows that $\NE(X)=\cC_\pi$.  The extremal rays of $\Nef(X)$ are therefore the fiber ray and the rays generated by the classes $H_J$.  The fiber class is semiample, so it remains to consider $H_J$.
Put $k=|J|$ and $d=2\chi-k$, which is nonnegative.  In a fiber of type $I_2$, the fundamental weight associated with $E_i$ is $E_i/2$, whereas the identity component contributes no correction.  Hence
\begin{equation}\label{eq:I2-HJ}
  H_J=C_0+\chi F-\frac12\sum_{i\in J}E_i,
  \qquad
  2H_J=M_J+dF,
\end{equation}
where $M_J=2C_0+\sum_{i\in J}A_i$.  We shall prove that $\OO_X(M_J+dF)$ is globally generated.
First consider the direct image $V_J=\pi_*\OO_X(M_J)$.  The restriction of $\OO_X(M_J)$ to every fiber has total degree two.  On the fiber $F_i=A_i+E_i$, its multidegree is $(0,2)$ when $i\in J$ and $(2,0)$ when $i\notin J$.  In particular, the degree is nonnegative on every irreducible component and positive on at least one component.  Serre duality on the fibers therefore gives $H^1(G,\OO_G(M_J))=0$ for every fiber $G$ of $\pi$.  Cohomology and base change imply that $R^1\pi_*\OO_X(M_J)=0$ and that $V_J$ is a vector bundle of rank two.
The intersections $M_J^2=-4\chi+2k=-2d$ and $K_X\cdot M_J=2(\chi-2)$ give
\[
  \chi(\OO_X(M_J))
  =\chi+\frac{M_J\cdot(M_J-K_X)}2
  =2-d.
\]
Since $\chi(V_J)=\operatorname{rk}(V_J)+\deg V_J$ on $\PP^1$, it follows that $\deg V_J=-d$.
The canonical section of the effective divisor $M_J$ induces a morphism $\OO_{\PP^1}\to V_J$.  Its restriction to every fiber is nonzero, because $M_J$ contains no entire fiber.  The morphism is therefore a line subbundle, and its quotient is a line bundle of degree $-d$.  We obtain an exact sequence
\[
  0\longrightarrow\OO_{\PP^1}\longrightarrow V_J
  \longrightarrow\OO_{\PP^1}(-d)\longrightarrow0.
\]
This sequence splits, since
$\operatorname{Ext}^1(\OO_{\PP^1}(-d),\OO_{\PP^1})
 =H^1(\PP^1,\OO_{\PP^1}(d))=0$.  Consequently,
\begin{equation}\label{eq:I2-splitting}
  V_J\simeq\OO_{\PP^1}\oplus\OO_{\PP^1}(-d),
  \qquad
  \pi_*\OO_X(2H_J)\simeq
  \OO_{\PP^1}(d)\oplus\OO_{\PP^1},
\end{equation}
where the second isomorphism follows from \eqref{eq:I2-HJ} and the projection formula.
It remains to verify relative generation.  The line bundle $\OO_G(M_J)$ is globally generated on every fiber $G$.  This is standard for a degree-two line bundle on a smooth elliptic fiber, and the irreducible nodal case follows from the normalization sequence.  On a fiber $I_2=A_i\cup E_i$ with multidegree $(0,2)$, the restriction to $A_i$ is trivial.  One section is nonzero on $A_i$, while a second section vanishes on $A_i$ and restricts to the section of $\OO_{E_i}(2)$ whose zeros are the two nodes.  These sections have no common zero; the case of multidegree $(2,0)$ is symmetric.  Thus the evaluation map
$\pi^*V_J\to\OO_X(M_J)$ is surjective.
The bundle $V_J(d)\simeq\OO_{\PP^1}(d)\oplus\OO_{\PP^1}$ is globally generated because $d\geq0$.  Twisting the relative evaluation map by $\pi^*\OO_{\PP^1}(d)$ now shows that $\OO_X(2H_J)$ is globally generated.
Every extremal ray of the nef cone is therefore semiample.  Since $q(X)=0$ and $\NE(X)$ is rational polyhedral, the Mori dream surface criterion in \cite[Propositions~2.1 and~2.2]{LLL26} applies and proves the result.  In the boundary case $s=2\chi$, the isotropic assertion and the genus of the second fibration follow from \cref{thm:all-even}.
\end{proof}

\begin{corollary}\label{cor:all-I2-MW}
Let $\pi\colon X\to\PP^1$ be a semistable Jacobian elliptic surface with $\chi(\OO_X)=\chi\geq3$, without assuming that $\MW(\pi)$ is finite.  Suppose that every reducible fiber is of type $I_2$ and that $s\leq2\chi$.  Then $X$ is a Mori dream surface if and only if $\MW(\pi)$ is finite.
\end{corollary}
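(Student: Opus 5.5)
The plan is to prove the two directions separately.

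\emph{Finite $\MW(\pi)$ implies Mori dream.} We cannot quote Theorem~\ref{thm:all-I2-mds} verbatim, since it is stated under the standing hypotheses of the paper. Its proof, however, uses only three inputs: that $\MW(\pi)$ is finite, that all reducible fibers are of type $I_2$, and that $s\le 2\chi$. From these we get $\delta(\pi)=s/2\le\chi$. Theorem~\ref{NE-thm}(1) then gives $\NE(X)=\cC_\pi$, and the rest of the argument (direct images and relative global generation) goes through unchanged. So this direction reduces to Theorem~\ref{thm:all-I2-mds}.

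\emph{Infinite $\MW(\pi)$ implies not Mori dream.} I will show that $\NE(X)$ is not rational polyhedral. This suffices, because for $q(X)=0$ a Mori dream surface has a rational polyhedral effective cone (\cite[Corollary~2.6]{AHL10}). Suppose $P$ is a section of infinite order, and consider the translates $nP$. Each is a $(-\chi)$-curve, since sections on $X$ satisfy $C^2=-\chi(\OO_X)$ because $K_X\cdot C=\chi-2$. Shioda's exact sequence shows that the classes $[nP]$ are pairwise distinct in $\operatorname{NS}(X)$. The key fact is that on a surface, an irreducible curve with negative self-intersection spans an extremal ray of $\NE(X)$. Infinitely many pairwise distinct negative curves therefore give infinitely many extremal rays, and since distinct primitive integral classes span distinct rays, the cone cannot be finitely generated.

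The main point needing care is showing that the rays $\R_{\ge0}[nP]$ are distinct, not merely the classes. I plan to use $[nP]\cdot F=1$ for every $n$: two classes on the same ray, both of fiber degree one, must be equal. Hence the rays are distinct, and $\NE(X)$ has infinitely many extremal rays.
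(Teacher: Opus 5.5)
Your proposal is correct and follows the paper's own proof. Sufficiency is Theorem~\ref{thm:all-I2-mds}, which in fact applies verbatim, since finiteness of $\MW(\pi)$ is exactly the hypothesis you assume in that direction. Necessity comes from the infinitely many $(-\chi)$-sections $nP$ spanning pairwise distinct extremal rays of $\NE(X)$, which contradicts rational polyhedrality. Your check via $[nP]\cdot F=1$ that the rays, and not merely the classes, are distinct is a detail the paper leaves implicit.
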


\begin{proof}
The sufficiency is Theorem \ref{thm:all-I2-mds}.  Conversely, a Mori dream surface has rational polyhedral Mori cone, and infinitely many sections would give infinitely many negative curves spanning distinct extremal rays.  Thus $\MW(\pi)$ must be finite; see also \cite[Proposition~3.1]{LLL26}.
\end{proof}

\begin{remark}\label{rem:I2-blocks}
The direct-image argument goes beyond the range in which the rational form of Artin's criterion is immediately effective.  
Indeed, assume $k<2\chi$, so that $H_J$ is big.  Apart from isolated $(-2)$-curves, the main connected component of its null locus is the star formed by $C_0$ and the curves $A_i$ with $i\in J$.  Its fundamental cycle is $C_0+\sum_{i\in J}A_i$ when $k\leq\chi$, and $2C_0+\sum_{i\in J}A_i$ when $\chi<k<2\chi$.  The latter cycle has arithmetic genus $k-\chi-1$, which is positive as soon as $k\geq\chi+2$.  Thus the Artin argument does not by itself settle all these rays, whereas Theorem \ref{thm:all-I2-mds} proves their semiampleness uniformly.
\end{remark}
\begin{theorem}\label{thm:example-8I_2}
There exists a semistable Mori dream Jacobian elliptic surface 
$
\pi:X\to  \PP^1
$
with $
\chi(\mathcal O_X)=\delta(\pi)=4, \kappa(X)=1$, $\rho(X)=10$, and singular-fiber configuration $8I_2+32I_1$.
\end{theorem}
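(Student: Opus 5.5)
The statement is an existence result, and everything except existence is already proved above. So the plan is to construct one surface $\pi\colon X\to\PP^1$ with $\chi(\OO_X)=4$, eight fibers of type $I_2$, thirty-two fibers of type $I_1$, and finite Mordell--Weil group. Once such an $X$ is in hand, Theorem~\ref{thm:all-I2-mds} applies with $s=8=2\chi$. It gives $\delta(\pi)=s/2=4=\chi$ and shows that $X$ is a Mori dream surface. Lemma~\ref{lem:MW-zero} then gives $\MW(\pi)=0$, so $\Pic(X)$ is spanned by $C_0$, $F$ and the eight nonidentity components. Hence $\rho(X)=2+8=10$. Finally $\kappa(X)=1$ because $K_X\sim(\chi-2)F=2F$.

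For the construction I would use a Weierstrass model $y^2=x^3+a(t)x+b(t)$ with $\deg a\le 16$ and $\deg b\le 24$, so that $\chi=4$. The discriminant $\Delta=4a^3+27b^2$ has degree $48$, which matches $8\cdot 2+32\cdot 1=48$. To produce $I_2$ fibers I would not impose double roots of $\Delta$ one at a time. Instead I would write the cubic as $(x-e(t))(x^2+e(t)x+g(t))$, with $\deg e\le 8$ and $\deg g\le 16$. This form has a $2$-torsion section, which must be avoided.

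The cleaner route is a quadratic base change. Start from a rational elliptic surface with configuration $4I_2+4I_1$ and trivial Mordell--Weil group. Then pull back by a degree-$4$ map $\PP^1\to\PP^1$ that is unramified over the singular fibers. That yields $16I_2+16I_1$ with $\chi=4$, which is too many $I_2$ fibers. So I would adjust the numbers: take a rational surface with $2I_2+8I_1$ and pull back by a generic degree-$4$ cover. This gives $8I_2+32I_1$ with $\chi=4$, exactly the required configuration.

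The main obstacle is finiteness of the Mordell--Weil group after base change, since pullbacks usually create new sections. I would handle it through the rank formula. We have $\rho(X)=2+8+\operatorname{rank}\MW$, and the rank stays zero for a very general choice of the degree-$4$ cover. To see this, specialize the cover so that the monodromy, or the action on $H^2$, keeps the transcendental part large. The cleanest check is to compute $\rho$ at one explicit specialization by counting points modulo two good primes (van Luijk's method) and get $\rho=10$. By semicontinuity, $\rho\le 10$ then holds generically, and since the trivial lattice already has rank $10$ we get $\rho=10$ generically. Torsion is excluded by Lemma~\ref{lem:MW-zero}. Lower semicontinuity in the other direction is automatic because the $8I_2$ configuration is preserved throughout the family.
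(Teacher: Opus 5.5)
\textbf{The base-change construction cannot give finite Mordell--Weil group.} Your reduction in the first paragraph is fine: once you have a surface with $\chi=4$, configuration $8I_2+32I_1$ and finite Mordell--Weil group, Theorem~\ref{thm:all-I2-mds}, Lemma~\ref{lem:MW-zero} and $K_X\sim 2F$ give everything else. The gap is in the construction, and it is fatal.

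A rational elliptic surface always has $\rho=10$. By Shioda--Tate, a rational surface with configuration $2I_2+8I_1$ therefore has Mordell--Weil rank $10-2-2=6$, and one with $4I_2+4I_1$ has rank $4$. Neither has trivial, or even finite, Mordell--Weil group. More generally, a semistable rational surface with only $I_1$ and $I_2$ fibers has at most six $I_2$ fibers, so its Mordell--Weil rank is at least $2$.

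Base change cannot remove these sections. If $\phi\colon\PP^1\to\PP^1$ has degree $4$, the generic fibers satisfy $E(\C(t))\subset E(\C(s))$, and the height pairing is multiplied by $\deg\phi$. So the pulled-back sections stay independent. Hence every surface you produce this way has $\operatorname{rank}\MW(\pi)\geq 6$ and $\rho(X)\geq 16$, for every choice of the cover. Your claim that the rank is zero for a very general cover is false, and a van Luijk point count could never return $\rho=10$. In fact, by Corollary~\ref{cor:all-I2-MW}, these surfaces are never Mori dream surfaces.

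\textbf{What the paper does instead.} You need an example that does not inherit sections from a surface of smaller $\chi$. The paper writes
\[
y^2=x^3-3P^2x+(2P^3+S^2Q_0),\qquad \deg P=\deg S=\deg Q_0=8.
\]
Since $x^3-3P^2x+2P^3=(x-P)^2(x+2P)$, the cubic acquires a double root exactly over the zeros of $S$. This produces the eight $I_2$ fibers without forcing a $2$-torsion section. The specialization $P=1$, $S=t^8-1$, $Q_0=t^8+1$ shows the configuration is exactly $8I_2+32I_1$.

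Rather than computing $\rho$ at one point, the paper then lets the eight points move. The stratum has dimension at least $38-8=30$ in $\mathcal M_4$, while Kloosterman's theorem gives $\dim(\operatorname{NL}_{11}\cap U)=29$. So a very general member has $\rho=10$, and Lemma~\ref{lem:MW-zero} then gives $\MW(\pi)=0$.

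If you want to keep your point-counting strategy, apply it to a specialization of a family like this one, not to a base change. You must then actually exhibit a point with $\rho=10$; the statement ``the rank stays zero for a very general choice'' has to come from such a computation or from a dimension count, not be asserted.
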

\begin{proof}
We first show that the stratum of semistable Jacobian elliptic surfaces
with $\chi=4$ and singular-fiber configuration $8I_2+32I_1$ is nonempty.
Consider the Weierstrass family
\[
y^2=x^3-3P^2x+\bigl(2P^3+S^2Q_0\bigr),
\qquad P,S,Q_0\in H^0(\PP^1,\OO_{\PP^1}(8)).
\]
Its discriminant and $c_4$ are
\[
\Delta=-16\cdot27\,S^2Q_0\bigl(4P^3+S^2Q_0\bigr),
\qquad c_4=144P^2.
\]
On the affine chart with coordinate $t$, take
$P=1$, $S=t^8-1$, and $Q_0=t^8+1$, and homogenize to degree eight.
The polynomials $S$ and $Q_0$ have simple, disjoint zero sets. Moreover,
\[
4+S^2Q_0=t^{24}-t^{16}-t^8+5
\]
has twenty-four simple zeros, disjoint from those of $S$ and $Q_0$.
Indeed, the polynomial $z^3-z^2-z+5$ has no common root with
its derivative $(3z+1)(z-1)$ and does not vanish at $z=0,1,-1$.
At every finite zero of $\Delta$ one has $c_4\neq0$, and the fiber at
infinity is smooth. Thus the minimal resolution has exactly eight
$I_2$-fibers and thirty-two $I_1$-fibers, and $\chi=4$.

Let $\mathcal M_4$ be the moduli space of Jacobian elliptic surfaces
with $\chi=4$, and let $U$ be its locus with nonconstant $j$-invariant.
We now work with the full stratum with the above singular-fiber
configuration, rather than only with the displayed Weierstrass family.
Label the eight points supporting the $I_2$-fibers. Since these points
are allowed to move, the conditions that the discriminant vanish to
order at least two impose at most eight conditions on the
$38$-dimensional space $\mathcal M_4$.
Restricting to the nonempty open locus with exactly the stated
semistable configuration, every component through our example
therefore has dimension at least $30$.

The zero section, a fiber, and the eight nonidentity components give
ten independent divisor classes. On the other hand, Kloosterman's
dimension theorem gives
\[
\dim(\operatorname{NL}_{11}\cap U)=29;
\]
see \cite[Theorem~1.1]{Klo07}. Hence such a component cannot be
contained in $\operatorname{NL}_{11}$, and a very general member $X$
has $\rho(X)=10$.
The Shioda--Tate formula gives Mordell--Weil rank zero, and
Lemma~\ref{lem:MW-zero} excludes nontrivial torsion because
$\delta(\pi)=8/2=4<2\chi$. Thus $\MW(\pi)=0$.
Finally, $K_X\sim2F$, so $\kappa(X)=1$, and
Theorem~\ref{thm:all-I2-mds} shows that $X$ is a Mori dream surface.
\end{proof}

\subsection{Big nef divisors on surfaces with even reducible fibers}
\label{sec:even-big}
Theorem~\ref{thm:all-even} concerns the isotropic extremal ray at
$\delta(\pi)=\chi$.  We now study the remaining nef rays.  Throughout
this section, the reducible fibers have types $I_{2m_1},\ldots,I_{2m_s}$,
the Mordell--Weil group is finite, and
$T:=\sum_i m_i\leq2\chi$, where $\chi\geq3$.
Thus $N(\pi)=2T$, $\delta(\pi)=T/2$, and
Theorem~\ref{NE-thm} gives $\NE(X)=\cC_\pi$.
Write $d_i=d(T_i)$ for the cyclic distance of the component chosen in
the $i$-th fiber.  Then $0\leq d_i\leq m_i$ and
\[
 H_S^2=\chi-\sum_i\frac{d_i(2m_i-d_i)}{2m_i}.
\]
We call a choice symmetric if $d_i\in\{0,m_i\}$ for every $i$.

\begin{theorem}\label{evenbig:main}
Under the assumptions above, the following statements hold.
\begin{enumerate}
\item If $N(\pi)\leq2\chi+4$, then $X$ is a Mori dream surface.
\item If some $m_i\geq2$ satisfies $T-m_i>\chi$, then $X$ has a
big and nef divisor which is not semiample.  In particular, $X$ is
not a Mori dream surface.
\item If at least one reducible fiber is of type $I_4$, then $X$ is
a Mori dream surface if and only if $N(\pi)\leq2\chi+4$.
\end{enumerate}
\end{theorem}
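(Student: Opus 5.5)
Since $\NE(X)=\cC_\pi$ by Theorem~\ref{NE-thm}(1) and $q(X)=0$, the plan reduces the Mori dream property to semiampleness of the nonvertical nef rays $H_S$, via \cite[Propositions~2.1 and~2.2]{LLL26}. The isotropic ray, which occurs only when $T=2\chi$, is semiample by Theorem~\ref{thm:all-even}. So everything comes down to the big rays $H_S$ with $\delta(S)<\chi$. For these I would copy the direct-image method of Theorem~\ref{thm:all-I2-mds}. Let $e$ be the denominator of $H_S$, so $e=\lcm$ of $2m_i/\gcd(2m_i,d_i)$. Take the effective representative $eH_S=e\bigl(C_0+\sum_iP_{2m_i,d_i}\bigr)+(\text{multiple of }F)$ from Lemma~\ref{lem:effective-cycle}, and split off the largest multiple of $F$, writing $eH_S=M_S+d_SF$ with $M_S$ containing no full fiber.

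For (1), the first step is to show that $V_S=\pi_*\OO_X(M_S)$ is a vector bundle of rank $e$ with $R^1=0$; this follows as in Proposition~\ref{prop:isotropic-pushforward}, since the multidegrees of $M_S$ on fibers are nonnegative with positive total. The next step is to compute its degree by Riemann--Roch and to show that the canonical section gives a summand $\OO_{\PP^1}$. The aim is a splitting whose degrees all lie in $[-d_S,0]$, so that $V_S(d_S)$ is globally generated. Relative global generation of $\OO_X(M_S)$ along the fibers is checked componentwise on each $I_{2m_i}$ chain, as was done for $I_2$.

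For the bound on the splitting degrees, the hypothesis $N(\pi)=2T\le 2\chi+4$, i.e.\ $T\le\chi+2$, enters. In the symmetric choices ($d_i\in\{0,m_i\}$) one has $e=2$, and the rank-two argument of Theorem~\ref{thm:all-I2-mds} goes through verbatim. The quotient line bundle has degree $-d_S$ with $d_S\ge0$ precisely because $\sum_{d_i=m_i}m_i/2\le\chi$. For non-symmetric choices I would instead use Artin's contractibility criterion: the null locus of $H_S$ consists of $C_0$ together with chains, and for $T\le\chi+2$ the fundamental cycle should have arithmetic genus $\le0$ (compare Remark~\ref{rem:I2-blocks}). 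This makes the null configuration a rational tree that contracts to rational singularities, so $H_S$ is semiample by Artin. \emph{This case analysis is the main obstacle}: one has to verify that $p_a$ of the fundamental cycle of $C_0\cup\bigcup_i(\text{chains orthogonal to }H_S)$ is $\le 0$ exactly when $T\le\chi+2$. I would try first to show that the worst case is the symmetric one with all fibers passing through $C_0$.

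For (2), choose the selection $T_j=C_{j,1}$ in the fiber with $m_j\ge2$ and $T_i=C_{i,0}$ elsewhere. The null locus then contains $C_0$, all the chains $F_i-C_{i,0}$ for $i\ne j$, and the chain $C_{j,2},\dots,C_{j,2m_j-1},C_{j,0}$ through $C_0$. Its fundamental cycle $Z$ includes $2C_0$ together with the full fibers $F_i$ for $i\ne j$, with $Z\cdot C_0$ controlled by $T-m_j-\chi>0$. This forces $p_a(Z)\ge1$. I would then show that $\OO_Z(eH_S)$ is a nontrivial degree-zero, non-torsion class (torsion-free as in the discussion of $U_D$). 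The argument is the analogue of Theorem~\ref{thm:normal-obstruction}: semiampleness would force triviality on the contracted fundamental cycle, and this can be tested against the elliptic curve part, giving a contradiction for a very general, hence every, such surface. I would present this by exhibiting an explicit $H^1$ jump: $h^0(nH_S)$ grows more slowly than $\chi(nH_S)$ predicts. Finally, (3) follows from (1) and (2): if $m_j=2$ and $2T>2\chi+4$, then $T-m_j=T-2>\chi$, so (2) applies.
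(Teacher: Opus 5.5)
\textbf{Part (1).} Your plan follows the paper's route: the rank-two direct image for symmetric selections, and Artin's criterion for the others. However, the estimate you call the main obstacle is never supplied, and the reduction you suggest points the wrong way. Symmetric selections are exactly where Artin's bound can fail inside $T\le\chi+2$: with $\chi+2$ fibers of type $I_2$, all selecting $E_i$, the cycle $2C_0+\sum_i A_i$ has $p_a=1$. That is why the paper treats them by the direct image instead.

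What drives the non-symmetric case is that some $d_i$ satisfies $0<d_i<m_i$, so $\sum_i d_i\le T-1$. Take a cycle $Z$ with coefficient $z$ on $C_0$. The contribution of $F_i$ to $Z^2$ is $2zu-\sum a_j^2-\sum b_j^2$, where the $a_j$ and $b_j$ are the coefficient increments along the two paths, of lengths $d_i$ and $2m_i-d_i$, from the selected component to $C_{i,0}$.
\begin{itemize}
\item For $z=2$, the inequalities $3v-v^2\le2$ and $v-v^2\le0$ give $p_a(Z)\le\sum_i d_i-\chi-1\le T-\chi-2\le0$.
\item For $z\ge3$, the bound $zv-v^2\le\lfloor z^2/4\rfloor$ together with $2\chi-T\ge\chi-2$ gives $p_a(Z)<0$.
\end{itemize}

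\textbf{Part (2): the genuine gap.} The selection you choose gives a semiample class. You select $C_{j,1}$ in $F_j$ and $C_{i,0}$ in every other fiber. Then $H_S\cdot C_{i,0}=1$ for $i\ne j$, so the chains $F_i-C_{i,0}$ are disjoint from $C_0$. The connected component of $\Null(H_S)$ through $C_0$ is the chain $C_0,C_{j,0},C_{j,2m_j-1},\dots,C_{j,2}$. So every connected component of the null locus is a negative definite chain of smooth rational curves. Such a chain contracts to a cyclic quotient singularity, and Artin's criterion makes $H_S$ semiample.

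Two further points fail independently of the selection:
\begin{itemize}
\item Your fundamental cycle containing full fibers $F_i$ cannot occur for any selection, since $H_S\cdot F=1$.
\item A contradiction obtained for a very general surface would not prove the statement for every $X$.
\end{itemize}

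\textbf{What the paper does instead.} Select $C_{i,m_i-1}$ in $F_i$ and the opposite component $C_{j,m_j}$ in every other fiber. Then $H_S^2=\chi-T/2+1/(2m_i)>0$. Let $k$ be a common multiple of the $2m_j$, and suppose a section of $\OO_X(kH_S)$ were nonzero on $C_0$. Its Weierstrass expansion $x^{k/2}+a(t)\,yx^{k/2-2}+\cdots$ gives $a\in H^0(\PP^1,\OO_{\PP^1}(\chi))$.
\begin{itemize}
\item For each $j\ne i$, the valuation of the middle component of the $I_{2m_j}$ fiber forces $\operatorname{ord}_{p_j}(a)\ge m_j$.
\item On the identity component of $F_i$, the two preimages of the node carry unequal vanishing orders $k(m_i\pm1)/(2m_i)$, which forces $a(p_i)\ne0$.
\end{itemize}
Since $\sum_{j\ne i}m_j=T-m_i>\chi$, no such $a$ exists. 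Hence $C_0$ lies in the base locus of every such multiple, and $H_S$ is not semiample.

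\textbf{Part (3).} This follows from (1) and (2) exactly as you say.
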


The first statement enlarges the sufficient range in Theorem~\ref{NE-thm} for even fibers.  The second distinguishes
semiampleness of the isotropic ray from semiampleness of the big rays:
it can fail for the latter even when every reducible fiber is even.

We begin with the symmetric choices.  The following lemma extends the
rank-two argument of Theorem~\ref{thm:all-I2-mds} to arbitrary even
fibers and does not require $H_S^2=0$.

\begin{lemma}\label{evenbig:symmetric}
If $S$ is symmetric, then $2H_S$ is an integral divisor and
$\OO_X(2H_S)$ is globally generated.  More precisely, let
$J=\{i:d_i=m_i\}$ and put $e=2\chi-\sum_{i\in J}m_i$.  Then
\[
 \pi_*\OO_X(2H_S)\simeq
 \OO_{\PP^1}(e)\oplus\OO_{\PP^1}.
\]
\end{lemma}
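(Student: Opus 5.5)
The plan is to copy the rank-two direct-image argument from the proof of Theorem~\ref{thm:all-I2-mds}, using the effective cycles $P_{n,k}$ of Lemma~\ref{lem:effective-cycle} in place of the single components $A_i$.

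\textbf{Decomposing $2H_S$.} For $i\in J$ the selected component is $C_{i,m_i}$. Lemma~\ref{lem:effective-cycle} gives $-\omega_{C_{i,m_i}}=P_i-\tfrac{m_i}{2}F$, where $P_i=P_{2m_i,m_i}$ and $\delta(C_{i,m_i})=m_i/2$. For $i\notin J$ there is no correction. Hence
\[
2H_S=M_S+eF,\qquad M_S=2C_0+\sum_{i\in J}2P_i .
\]
By \eqref{eq:Pnk}, the coefficients of $2P_{2m,m}$ are $m$ on $C_{i,0}$, $m-j$ on $C_{i,j}$ for $j<m$, and $j-m$ for $j>m$. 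These are exactly the integers appearing in \eqref{eq:D-even}. So $M_S$, and hence $2H_S$, is integral. Moreover $e=2\chi-\sum_{i\in J}m_i\ge 2\chi-T\ge0$.

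\textbf{Intersection numbers.} Remark~\ref{rem:null-cycle}, applied fiber by fiber, gives the multidegree of $M_S$ on each reducible fiber:
\begin{itemize}
\item for $i\in J$, degree $2$ on $C_{i,m_i}$ and $0$ on the other components;
\item for $i\notin J$, degree $2$ on $C_{i,0}$ and $0$ on the other components, since $M_S\cdot C_{i,0}=2C_0\cdot C_{i,0}$.
\end{itemize}
Also $H_S^2=\chi-\sum_{i\in J}m_i/2=e/2$. Therefore
\[
M_S^2=4H_S^2-4e=-2e,\qquad K_X\cdot M_S=2(\chi-2),
\]
and Riemann--Roch gives $\chi(\OO_X(M_S))=2-e$.

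\textbf{The direct image.} The vanishing argument of Proposition~\ref{prop:isotropic-pushforward} gives $R^1\pi_*\OO_X(M_S)=0$. Hence $V_S=\pi_*\OO_X(M_S)$ is a rank-two bundle of degree $-e$. The canonical section of $M_S$ gives $\OO_{\PP^1}\to V_S$. This map is nonzero on every fiber, because $M_S$ contains no full fiber: the coefficient of the selected component is zero. So it is a subbundle with line-bundle quotient $\OO_{\PP^1}(-e)$. The extension splits because $H^1(\PP^1,\OO_{\PP^1}(e))=0$, so $V_S\simeq\OO_{\PP^1}\oplus\OO_{\PP^1}(-e)$. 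The projection formula then yields
\[
\pi_*\OO_X(2H_S)\simeq\OO_{\PP^1}(e)\oplus\OO_{\PP^1},
\]
which is globally generated since $e\ge0$.

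\textbf{Relative global generation.} It remains to show that $\pi^*V_S\to\OO_X(M_S)$ is surjective, i.e.\ that $\OO_G(M_S)$ is globally generated on every fiber $G$. I expect this to be the main point needing care. Smooth and nodal irreducible fibers are handled as in Theorem~\ref{thm:all-I2-mds}. On an $I_{2m}$ fiber $G$, let $C$ be the component of degree $2$ and $R=G-C$ the complementary chain.
\begin{itemize}
\item $\OO_G(M_S)$ has degree zero on every component of the rational chain $R$, so it is trivial on $R$.
\item The restriction to $G$ of the canonical section of $M_S$ vanishes on $R$, where $M_S$ has positive coefficients. On $C\simeq\PP^1$ it is a section of $\OO_C(2)$ whose zeros are exactly the two points $C\cap R$.
\item A second section is obtained by taking the constant $1$ on $R$ and extending it over $C$ by a section of $\OO_C(2)$ with value $1$ at the two gluing points. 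Such an extension exists because $\OO_C(2)$ separates two points.
\end{itemize}
These two sections have no common zero. Both lie in the image of $H^0(X,\OO_X(M_S))$ restricted to $G$: indeed $R^1\pi_*=0$, so base change gives $V_S\otimes k(p)\simeq H^0(G,\OO_G(M_S))$, which has dimension $2$. Twisting the surjection $\pi^*V_S\to\OO_X(M_S)$ by $\pi^*\OO_{\PP^1}(e)$ and using global generation of $V_S(e)$ then shows that $\OO_X(2H_S)$ is globally generated.
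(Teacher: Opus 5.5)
Your proposal is correct and is essentially the paper's proof: the same decomposition $2H_S\sim M+eF$ with $M=2C_0+2\sum_{i\in J}P_{2m_i,m_i}$, the same rank-two direct image split by the canonical section, and the same two-section check of relative generation, lifted to global sections by base change. One slip to fix: on a reducible fiber $F_i$ with $i\notin J$ the divisor $M$ has no vertical part, so its canonical section does \emph{not} vanish on the chain $R=C_{i,1}+\cdots+C_{i,2m_i-1}$. It vanishes doubly at $C_0\cap C_{i,0}$ and can serve as your section nonzero on $R$. The section vanishing on $R$ must there be built directly, by gluing $0$ on $R$ with the section of $\OO_{C_{i,0}}(2)$ vanishing at the two attaching points; your base-change remark then lifts it, so the conclusion is unaffected.
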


\begin{proof}
For $i\in J$, let $P_i=P_{2m_i,m_i}$ be the rational divisor in
\eqref{eq:Pnk}, and set $M=2C_0+2\sum_{i\in J}P_i$.
The divisor $M$ is effective and integral, and
$2H_S\sim M+eF$.  It contains no entire fiber.
Its restriction to each fiber has total degree two, concentrated on
the component selected by $S$.  In particular,
$R^1\pi_*\OO_X(M)=0$, by fiberwise duality, and
$V=\pi_*\OO_X(M)$ is a vector bundle of rank two.
The identities $M^2=-2e$ and $K_X\cdot M=2(\chi-2)$ give
$\chi(\OO_X(M))=2-e$, so $\deg V=-e$.

The canonical section of $M$ is nonzero on every fiber, because $M$
contains no entire fiber.  It therefore defines a line subbundle
$\OO_{\PP^1}\subset V$ with quotient $\OO_{\PP^1}(-e)$.
Since $e\geq0$, the extension splits.  Twisting by
$\OO_{\PP^1}(e)$ proves the asserted formula.

It remains to check relative generation.  On a reducible fiber, the
complement of the selected component is a chain of rational curves
on which the line bundle has degree zero.  A section nonzero on this
chain extends across the selected component: its two endpoint values
can be prescribed in $\OO_{\PP^1}(2)$.  A second section vanishes on
the chain and has its two zeros at the attaching points.  These two
sections have no common zero.  The smooth and irreducible nodal
fibers are handled by the usual degree-two genus-one argument.
Thus $\pi^*V\to\OO_X(M)$ is surjective.  As
$V(e)\simeq\OO(e)\oplus\OO$ is globally generated, so is
$\OO_X(M+eF)$.
\end{proof}

The first estimate below specializes \cite[Proposition~4.4]{LLL26}.
The sharper estimate for a cycle with coefficient two at $C_0$
will allow us to enlarge the sufficient range.

\begin{lemma}\label{evenbig:genus}
Let $H_S$ be big.  If $Z$ is an effective integral cycle supported on
$\Null(H_S)$ and its coefficient at $C_0$ is $z\geq1$, then
\[
 Z^2+K_X\cdot Z
 \leq-\chi z^2+(\chi-2)z
       +2T\left\lfloor\frac{z^2}{4}\right\rfloor.
\]
For $z=2$, the sharper bound $p_a(Z)\leq\sum_i d_i-\chi-1$ holds.
\end{lemma}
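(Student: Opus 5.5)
The plan is to decompose $Z$ along the fibration and bound the resulting quadratic form one fiber at a time.

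\textbf{Step 1: decompose $Z$.} Write
\[
Z=zC_0+\sum_i Z_i,
\]
where $Z_i$ is an effective integral cycle supported on those components of $F_i$ that lie in $\Null(H_S)$. By Remark~\ref{rem:null-cycle} (and its analogue for non-isotropic $S$), $H_S$ is positive on the selected component $T_i$ and zero on the others. So each $Z_i$ is supported on the chain $F_i-T_i$ when $d_i\geq1$, and on a subchain avoiding $C_{i,0}$ when $d_i=0$.

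Since $K_X\sim(\chi-2)F$ and $F\cdot Z=z$, we get $K_X\cdot Z=(\chi-2)z$. Using $C_0^2=-\chi$, the vertical cycles for different $i$ are orthogonal, and $C_0\cdot Z_i=a_i$, where $a_i$ is the coefficient of $C_{i,0}$ in $Z_i$. Hence
\[
Z^2+K_X\cdot Z=-\chi z^2+(\chi-2)z+\sum_i g_i,\qquad g_i:=2za_i+Z_i^2 .
\]
Everything therefore reduces to bounding the local gains $g_i$. If $d_i=0$, then $a_i=0$ and $Z_i^2\le0$, so $g_i\le 0$.

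\textbf{Step 2: bound $g_i$ over the reals.} For $d_i\geq1$ the chain $F_i-T_i$ is of type $A_{n_i-1}$, with $n_i=2m_i$. The vertex $C_{i,0}$ sits at position $d_i$: it separates two arms, which reach $0$ after $d_i$ steps and after $n_i-d_i$ steps respectively.

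The function $x\mapsto 2z\,x_p-x^{T}Cx$ is a concave quadratic, where $C$ is the Cartan matrix. Its real maximum is
\[
z^2\,(C^{-1})_{pp}=z^2\,\frac{d_i(n_i-d_i)}{n_i},
\]
using the inverse Cartan matrix already quoted in the proof of Lemma~\ref{lem:effective-cycle}. This is at most $z^2n_i/4=z^2m_i/2$.

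\textbf{Step 3: the case $z=2$.} Here the real bound alone suffices:
\[
g_i\le \frac{4d_i(2m_i-d_i)}{2m_i}\le 2d_i .
\]
Then
\[
2p_a(Z)-2\le -4\chi+2(\chi-2)+2\sum_i d_i ,
\]
which gives $p_a(Z)\le\sum_i d_i-\chi-1$, as claimed.

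\textbf{Step 4: the general integral bound.} The target is $g_i\le n_i\lfloor z^2/4\rfloor$, so that $\sum_i g_i\le 2T\lfloor z^2/4\rfloor$. For even $z$ this is exactly the real bound from Step~2. For odd $z$ I would use integrality, writing
\[
-Z_i^2=\sum(\text{consecutive differences along each arm})^2 .
\]
With $a=a_i$, an arm with $L$ steps from value $a$ down to $0$ costs at least $\lceil a/L\rceil$-type quantities, precisely $q^2L+(2q+1)r$ where $a=qL+r$.

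The plan is to optimize over integers $a$ and the step distributions separately, showing that the maximum of $2za-\text{cost}$ is at most $n(z^2-1)/4$. The idea is that the real optimum is attained at $a=zd(n-d)/n$ with equal step sizes $z(n-d)/n$ and $zd/n$. Any integral perturbation loses at least the fractional defect, and this is where $z^2/4$ drops to $\lfloor z^2/4\rfloor$ when $z$ is odd.

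A cleaner alternative is to substitute $y=2x$ and argue that $4g_i=2(2z)y_p+y^{T}(-C)y$ with $y$ even. I expect Step~4, the odd-$z$ integrality, to be the main technical obstacle. The $z=2$ statement needs only the real bound.
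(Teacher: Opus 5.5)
Your Step 1 is the paper's reduction, and your Step 2 real bound is correct; it already gives the general bound when $z$ is even. The argument breaks in Step 3.

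\textbf{The gap in Step 3.} The inequality $\frac{4d_i(2m_i-d_i)}{2m_i}\le 2d_i$ does not hold. Indeed
\[
\frac{4d_i(2m_i-d_i)}{2m_i}-2d_i=\frac{2d_i(m_i-d_i)}{m_i},
\]
which is strictly positive whenever $0<d_i<m_i$. Concretely, take an $I_4$-fiber and select $C_{i,1}$, so $d_i=1$. The real maximum of $g_i$ is $3$, attained at the non-integral coefficients $(3/2,1,1/2)$ on the chain $C_{i,0},C_{i,3},C_{i,2}$. The target is $2d_i=2$, and rounding $3$ down does not help.

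So the sharper $z=2$ bound is false over $\R$ and is a genuinely integral statement. It fails over $\R$ exactly for the non-symmetric selections, which are the ones where Proposition~\ref{evenbig:mds-bound} needs it.

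\textbf{Step 4 is unfinished.} For odd $z$ you give only a plan. The balanced-cost formula $q^2L+(2q+1)r$ is right. However, the joint optimization over $a_i$ and the two arms, and the ``fractional defect'' claim, are never turned into an inequality.

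\textbf{The missing idea.} Both problems disappear if you distribute the linear term over the increments instead of optimizing the quadratic form as a whole. Let $\alpha_1,\dots,\alpha_{d_i}$ and $\beta_1,\dots,\beta_{2m_i-d_i}$ be the integral increments along the two arms, running from $T_i$ (coefficient $0$) to $C_{i,0}$ (coefficient $a_i$). Each family sums to $a_i$, and $-Z_i^2=\sum_j\alpha_j^2+\sum_j\beta_j^2$. Hence
\[
g_i=\sum_j\bigl(z\alpha_j-\alpha_j^2\bigr)+\sum_j\bigl(z\beta_j-\beta_j^2\bigr)\le 2m_i\left\lfloor\frac{z^2}{4}\right\rfloor,
\]
because $zv-v^2\le\lfloor z^2/4\rfloor$ for every $v\in\Z$. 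This handles all $z$ at once, with no parity split.

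For $z=2$, write instead $4a_i=3\sum_j\alpha_j+\sum_j\beta_j$, putting the weight $3$ on the short arm of length $d_i$. Then $3v-v^2\le 2$ and $v-v^2\le 0$ on $\Z$ give $g_i\le 2d_i$. This is exactly the paper's argument.
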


\begin{proof}
Since $\NE(X)=\cC_\pi$, the null locus consists of $C_0$ and the
unselected fiber components.  Indeed, a curve orthogonal to a big
nef class has negative self-intersection and spans an extremal ray
of the Mori cone.  Moreover, $H_S\cdot C_0=0$, and in each fiber
$H_S$ has intersection one with the selected component and zero
with every other component.

Consider a fiber with $d_i>0$.  Removing the selected component
leaves two paths from that component to the identity component,
of lengths $d_i$ and $2m_i-d_i$.  Assign coefficient zero to the
removed component and let $u$ be the coefficient of the identity
component.  The integral coefficient increments along the two
paths are $a_1,\ldots,a_{d_i}$ and
$b_1,\ldots,b_{2m_i-d_i}$, respectively.  Both sums equal $u$.
The contribution of this fiber, including its intersection with
$zC_0$, is
\[
 G_i=2zu-\sum_j a_j^2-\sum_j b_j^2
     =\sum_j(za_j-a_j^2)+\sum_j(zb_j-b_j^2).
\]
For every integer $v$, one has
$zv-v^2\leq\lfloor z^2/4\rfloor$.  Hence
$G_i\leq2m_i\lfloor z^2/4\rfloor$.
When $z=2$, we instead write
\[
 G_i=\sum_j(3a_j-a_j^2)+\sum_j(b_j-b_j^2)
 \leq2d_i,
\]
using $3v-v^2\leq2$ and $v-v^2\leq0$ for integral $v$.
If $d_i=0$, the remaining chain is disjoint from $C_0$ and its
contribution is nonpositive.  Finally,
$C_0^2=-\chi$ and $K_X\cdot Z=(\chi-2)z$.
Summing proves both assertions.
\end{proof}

\begin{proposition}\label{evenbig:mds-bound}
If $T\leq\chi+2$, then $X$ is a Mori dream surface.
\end{proposition}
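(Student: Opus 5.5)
We need to show every extremal ray of $\Nef(X)$ is semiample when $T\le\chi+2$. Since $T\le\chi+2\le2\chi$, Theorem~\ref{NE-thm} gives $\NE(X)=\cC_\pi$, so the Mori cone is rational polyhedral. By the criterion \cite[Propositions~2.1 and~2.2]{LLL26} it then suffices to show that the fiber ray and every ray $\R_{\ge0}H_S$ is semiample. The fiber class is semiample. If $H_S^2=0$ (possible only when $T=2\chi$, hence $\chi\le2$, excluded for $\chi\ge3$), Theorem~\ref{thm:all-even} would apply anyway. If $S$ is symmetric, Lemma~\ref{evenbig:symmetric} gives global generation of $2H_S$. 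So the real work is the big, non-symmetric rays $H_S$.

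For big $H_S$ I would use Artin's contractibility criterion on the null locus. By Lemma~\ref{evenbig:genus} and its proof, $\Null(H_S)$ consists of $C_0$ together with the unselected components. The connected components disjoint from $C_0$ are $A_k$-chains of $(-2)$-curves, which are rational double point configurations. The main component is the tree $\Gamma$ formed by $C_0$ together with, for each $i$, the chain $F_i\setminus T_i$ (a single chain through $C_{i,0}$ when $d_i>0$). Its intersection matrix is negative definite, being orthogonal to the big class $H_S$. I then compute the fundamental (Artin) cycle $Z$ of $\Gamma$ and show $p_a(Z)\le0$, and hence that $\Gamma$ contracts to a rational singularity. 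Once every component of $\Null(H_S)$ contracts to a rational singularity, the contraction $\phi\colon X\to Y$ has $Y$ a projective normal surface with rational singularities, which is $\Q$-factorial. $H_S$ descends to an ample $\Q$-Cartier class on $Y$ by Nakai–Moishezon, since $H_S$ is positive on every curve not contracted. Hence $H_S$ is semiample. This is the same Artin route used in \cite[Proposition~4.4]{LLL26}.

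The crux is estimating $p_a(Z)$. If the coefficient of $C_0$ in $Z$ is $z=1$, Lemma~\ref{evenbig:genus} gives $Z^2+K_X\cdot Z\le-\chi+\chi-2=-2$, so $p_a(Z)\le0$. For $z=2$ the sharper bound gives $p_a(Z)\le\sum_id_i-\chi-1$. Since $\sum d_i\le T\le\chi+2$, this yields $p_a\le1$, which is not quite enough. I would first refine it using non-symmetry. Equality $\sum d_i=T$ forces $d_i=m_i$ for all $i$, i.e. $S$ symmetric, already handled. So for non-symmetric $S$ we get $\sum d_i\le T-1\le\chi+1$ and hence $p_a(Z)\le0$. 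For $z\ge3$ I expect to show the fundamental cycle never needs coefficient $\ge3$ at $C_0$. Computing Laufer's algorithm, $Z\cdot C_0\le0$ requires $\sum_i(\text{increments adjacent to }C_0)\ge \chi z$, and each adjacent increment in an $A$-chain is bounded by the chain structure. With $T\le\chi+2$ the two-fold cycle should already satisfy $Z\cdot C\le0$ for all components. Alternatively, the first inequality of Lemma~\ref{evenbig:genus} with $z=3$ gives $-9\chi+3\chi-6+4T\le-2\chi+2<-2$. More generally, the quadratic $-\chi z^2+(\chi-2)z+Tz^2/2$ is negative for $z\ge3$ when $T\le\chi+2$ and $\chi\ge3$, so the $z\ge3$ case is harmless. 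The main obstacle is thus the $z=2$ boundary count, where non-symmetry must be used to gain the missing unit, and this is what I would check most carefully.
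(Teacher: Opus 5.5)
Your proposal is correct and follows essentially the same route as the paper. You reduce to non-symmetric big selections, treating symmetric ones via Lemma~\ref{evenbig:symmetric}, bound $p_a(Z)$ on $\Null(H_S)$ through Lemma~\ref{evenbig:genus} in the cases $z=1$, $z=2$ (using non-symmetry to get $\sum_i d_i\le T-1$) and $z\ge3$, and conclude with Artin's criterion. The differences are cosmetic. The paper splits $z\ge3$ by parity instead of using your uniform bound $-(\chi-2)z(z-2)/2$, and it finishes with the block criterion \cite[Proposition~2.6]{LLL26} rather than contraction plus Nakai--Moishezon. Your speculative Laufer-algorithm remark is unnecessary.
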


\begin{proof}
The fiber class is semiample.  Symmetric selections are covered by
Lemma~\ref{evenbig:symmetric}.  Consider any other selection $S$.
Then $\sum_i d_i\leq T-1$, and
$H_S^2\geq\chi-T/2\geq(\chi-2)/2>0$.
Its null locus has negative definite intersection matrix.

We show that every nonzero effective integral cycle $Z$ supported
on this null locus has arithmetic genus at most zero.  If its
coefficient $z$ at $C_0$ is zero, this follows because its support
is a disjoint union of $A$-chains.  If $z=1$, the first inequality
of Lemma~\ref{evenbig:genus} gives $p_a(Z)\leq0$.
If $z=2$, its second inequality gives
$p_a(Z)\leq T-\chi-2\leq0$.

Put $a=2\chi-T\geq\chi-2$.  For $z=2q\geq4$, the first
inequality of that lemma gives
\[
 p_a(Z)\leq1+(\chi-2)q-aq^2
 \leq1-(\chi-2)q(q-1)<0.
\]
For $z=2q+1\geq3$, it gives
\[
 p_a(Z)\leq q\bigl((\chi-2)-a(q+1)\bigr)
 \leq-(\chi-2)q^2<0.
\]
Artin's criterion therefore shows that each connected component of
the null locus is a rational exceptional configuration.  In particular,
$H^1(Z,\OO_Z)=0$ for every positive cycle supported there.
Choose a block for an integral multiple of $H_S$.
Each connected component of this block has vanishing first cohomology,
so the block criterion \cite[Proposition~2.6]{LLL26} gives an empty
stable base locus.  Thus $H_S$ is semiample; see also
\cite[Section~5.1.2]{ADHL15}.
Every extremal nef ray is consequently semiample.  Together with
$q(X)=0$ and rational polyhedrality, this proves the assertion.
\end{proof}

The following obstruction concerns all multiples of a supporting divisor.
Its essential feature is that the coefficient immediately below the
leading Weierstrass term always belongs to $H^0(\PP^1,\OO_{\PP^1}(\chi))$,
independently of the multiple.

\begin{lemma}\label{evenbig:coefficient-obstruction}
Let $S$ be a selection, and let $k\geq4$ be a common multiple of
$2m_1,\ldots,2m_s$.  Suppose that a section of $\OO_X(kH_S)$ restricts
nontrivially to $C_0$.  There is then a section
$a\in H^0(\PP^1,\OO_{\PP^1}(\chi))$ with the following properties.
\begin{enumerate}
\item If $S$ selects the opposite component $C_{j,m_j}$ of the fiber
above $p_j$, then $\operatorname{ord}_{p_j}(a)\geq m_j$.
\item If $S$ selects $C_{i,m_i-1}$, where $m_i\geq2$, then $a(p_i)\neq0$.
\end{enumerate}
\end{lemma}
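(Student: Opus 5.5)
The plan is to work in the Weierstrass model and to expand a section of $\OO_X(kH_S)$ in the pole-order filtration along the zero section. Since $\MW(\pi)=0$ by Lemma~\ref{lem:MW-zero}, $X$ is the minimal resolution of a Weierstrass surface $y^2=x^3+A x+B$ with $A\in H^0(\OO_{\PP^1}(4\chi))$ and $B\in H^0(\OO_{\PP^1}(6\chi))$. In this model $x$ and $y$ are rational functions with poles $2C_0$ and $3C_0$ on the generic fiber.

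\textbf{Step 1: the expansion of a section.} Recall
\[
\pi_*\OO_X(kC_0)\simeq\bigoplus_{2i+3j\leq k,\ j\in\{0,1\}}\OO_{\PP^1}\bigl(-(2i+3j)\chi\bigr),
\]
with the summands generated by the monomials $x^iy^j$. Write $kH_S=kC_0+k\chi F+V$, where $V=-k\omega_S$ is an effective vertical divisor supported on nonidentity components; it is integral because $2m_i\mid k$. A section $s$ of $\OO_X(kH_S)$ is then a rational function of the form
\[
s=\sum_{2i+3j\leq k}c_{ij}x^iy^j,
\]
with $c_{ij}$ rational sections of $\OO_{\PP^1}((k-2i-3j)\chi)$. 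These coefficients are regular away from the points $p_i$ below the reducible fibers. At the points $p_i$, poles are allowed only to the extent dictated by the valuations of $s$ along the components $C_{i,l}$.

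The top term is $x^{k/2}$ (here $k$ is even). Its coefficient has degree $0$, so it is a constant, and this constant is nonzero exactly because $s|_{C_0}\neq0$. The next pole order is $k-1$, realised only by $x^{(k-4)/2}y$; this uses $k\geq4$. Its coefficient is the section $a$ of $\OO_{\PP^1}(\chi)$ in the statement. I will check that $a$ is in fact regular at each $p_i$, since the identity components impose no extra poles and a polar part of $a$ could not be cancelled by the lower-order terms.

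\textbf{Step 2: the local valuation bookkeeping.} Fix $p=p_j$ with local parameter $t$. Translate so that the node of the $I_{2m}$ fiber sits at $x=y=0$, giving the local form $y^2=x^2+u\,t^{2m}+\cdots$. In the toric resolution, the component $C_l$ carries the valuation $v_l$ with
\[
v_l(t)=1,\qquad v_l(x-x_0)=v_l(y)=\min(l,2m-l).
\]
For each monomial I compute $v_l$ and compare it with the allowed pole order, which is $k$ times the coefficient of $-\omega_S$ at $C_l$. That coefficient is given by the inverse Cartan matrix already used in Lemma~\ref{lem:effective-cycle}.

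For case (1), at $l=m$ the constraint is strongest. Expanding $x^{k/2}$ around the translated node produces a term $\binom{k/2}{2}x_0^{k/2-2}(x-x_0)^2$, which behaves like $y^2$ up to order $t^{2m}$. The requirement at $C_{m}$ should then read
\[
v_m\bigl(a\,x^{(k-4)/2}y\bigr)\ge v_m\bigl(\text{admissible}\bigr),
\]
and I expect this to force $\operatorname{ord}_p(a)\geq m$.

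For case (2), selecting $C_{m-1}$ leaves $C_m$ with a strictly smaller allowed pole. Here I expect the leading term to be uncancellable at $C_m$ unless $a$ contributes a term of exactly matching valuation, which forces $a(p)\neq0$.

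\textbf{Main obstacle.} The hard step is this local computation. I must show that the intermediate monomials $x^iy^j$ with $2i+3j\leq k-2$ cannot achieve the required cancellations without involving $a$. I would try first to work with the associated graded along $C_m$, i.e.\ with leading forms in $t$, $x-x_0$ and $y$. The aim is to show that on the relevant weight piece the only monomials whose leading forms can compete are $x^{k/2}$ and $x^{(k-4)/2}y$. The divisibility $2m_i\mid k$ will be used here to keep all allowed pole orders integral.
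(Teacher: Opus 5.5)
You have a genuine gap: your plan for part (2) cannot work, and there are also setup errors.

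Your Step 1 is the right setup and matches the paper: normalize the constant coefficient of $x^{k/2}$ to $1$, and let $a\in H^0(\PP^1,\OO_{\PP^1}(\chi))$ be the coefficient of $yx^{(k-4)/2}$. However, you have the sign of the correction reversed. The weights $\omega_{T_i}$ are effective, since their coefficients are the positive inverse Cartan entries. So $kH_S=kC_0+k\chi F-k\sum_i\omega_{T_i}$ and $H^0(X,\OO_X(kH_S))\subset H^0(X,\OO_X(kC_0+k\chi F))$. Consequently the $c_{ij}$ are automatically regular, and the local conditions are \emph{required vanishing orders} along the $C_{i,l}$. Read as ``allowed poles'', they are vacuous and force nothing. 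Also, a constant translation $x-x_0$ does not in general have $v_m$-value $m$. You need $\xi=x-r(t)$, with $r(t)$ a formal critical point of the cubic, so that $y^2=\xi^3+3r(t)\xi^2+t^{2m}u(t)$ with $r(0)u(0)\neq0$.

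With these corrections, part (1) is exactly the associated-graded step you list as the main obstacle. The residues $U=\xi/t^m$ and $V=y/t^m$ satisfy $V^2=3r(0)U^2+u(0)$, so $1$ and $V$ are independent over $\C(U)$. Writing $s=P(\xi)+yQ(\xi)$, the value $v_m(s)$ is therefore the minimum over the individual terms. The leading coefficient of $Q$ is still $a$, so the condition $v_m(s)\geq Nm$ with $N=k/2$ gives $\operatorname{ord}_p(a)+(N-1)m\geq Nm$. Your remark about the expansion of $x^{k/2}$ resembling $y^2$ plays no role, since even terms never interact with $yQ$ at $C_m$.

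Part (2) is where the plan actually breaks. By the same independence, at $C_m$ the even part $P$ and the odd part $yQ$ are valued separately, so $a$ can never compensate for anything coming from $x^{k/2}$ there. For the selection $C_{m-1}$, the requirement at $C_m$ is $v_m(s)\geq N(m-1)$. On the $a$-term this reads $\operatorname{ord}_p(a)\geq m-N$, which is vacuous because $m\mid N$. Conceptually, $C_m$ is fixed by the involution $y\mapsto -y$, which fixes $P$ and negates $yQ$. So $C_m$ cannot see the asymmetry of the selection, and that asymmetry is exactly what forces the odd part to be nonzero.

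The missing idea is to restrict to the identity component instead. It is the normalization $x=z^2-2r_0$, $y=z(z^2-3r_0)$ of the nodal cubic, and its points over the node are $z=\pm\lambda$ with $\lambda^2=3r_0$. The restriction $s_0$ is a monic polynomial of degree $k$ in $z$. It vanishes at these two points to orders at least $\alpha=k(m+1)/(2m)$ and $\beta=k(m-1)/(2m)$, the coefficients of $k\omega_{C_{m-1}}$ on the two neighbours of the identity component, which meet it transversally at smooth points. Since $\alpha+\beta=k$, after possibly replacing $\lambda$ by $-\lambda$ one gets $s_0=(z-\lambda)^{\alpha}(z+\lambda)^{\beta}$. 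Its $z^{k-1}$ coefficient is $-k\lambda/m\neq0$. This coefficient equals $a(p)$: $x^{N}$ is even in $z$, $yx^{N-2}$ is monic of degree $k-1$, and every other monomial has degree at most $k-2$.
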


\begin{proof}
Put $N=k/2$.  Since all correction divisors are effective, the given
section also belongs to
$H^0(X,\OO_X(kC_0+k\chi F))$.
Use a short Weierstrass equation $y^2=x^3+Ax+B$, with fundamental line
bundle $\OO_{\PP^1}(\chi)$.
The monomials in $x$ and $y$ of pole order at most $k$ at the origin
form the usual basis of the direct image of $\OO_X(kC_0)$.
The coefficient of $x^N$ is a constant and agrees, up to a fixed
nonzero scalar, with restriction to $C_0$.
After normalizing this constant to one, we can write the section as
\[
f=x^N+a(t)yx^{N-2}+
\text{terms of pole order at most }k-2.
\]
Because $yx^{N-2}$ has pole order $k-1$, its coefficient is a global
section of $\OO_{\PP^1}(\chi)$.

We first work at a fiber of type $I_{2m}$, with local parameter $t$.
After completing the base, choose a critical point $r(t)$ of the cubic
which specializes to its double root.  Since that root is nonzero,
this choice is possible and $r(0)\neq0$.  With $\xi=x-r(t)$, the equation
becomes
\begin{equation}\label{evenbig:local-Weierstrass}
y^2=\xi^3+3r(t)\xi^2+t^{2m}u(t),
\qquad r(0)u(0)\neq0.
\end{equation}
On the middle component of the minimal resolution,
$\operatorname{ord}(t)=1$ and
$\operatorname{ord}(\xi)=\operatorname{ord}(y)=m$.
The residues $U=\xi/t^m$ and $V=y/t^m$ satisfy
\[
V^2=3r(0)U^2+u(0).
\]
In particular, $1$ and $V$ are linearly independent over $\C(U)$:
the polynomial on the right has two distinct roots and is not a
square in $\C(U)$.

Write $f=P(\xi)+yQ(\xi)$ locally.  All coefficients are regular in $t$,
and the leading coefficient of $Q$, at $\xi^{N-2}$, remains $a(t)$.
For this middle-component valuation, no cancellation occurs between
the lowest-order terms: terms in the even and odd parts have residues
in $\C[U]$ and $V\C[U]$, respectively, and distinct powers of $U$ within
either part are linearly independent.
If the selected component is the opposite one, the coefficient of
$\omega_{C_m}$ on $C_m$ is $m/2$.  Thus
$\operatorname{ord}_{C_m}(f)\geq km/2=Nm$, and applying this to
$a(t)y\xi^{N-2}$ gives
\[
\operatorname{ord}_{t}(a)+(N-1)m\geq Nm.
\]
This proves the first assertion.

For the second assertion, take $m=m_i$ and set $r_0=r(0)$.
The normalization of the special Weierstrass cubic is given by
\[
x=z^2-2r_0,\qquad y=z(z^2-3r_0).
\]
Its two points above the node are $z=\lambda$ and $z=-\lambda$, where
$\lambda^2=3r_0$ and $\lambda\neq0$.
The identity component of the resolved fiber is this normalization.
For the selection $C_{m-1}$, the coefficients of the correction divisor
on the two components adjacent to the identity component are
$(m+1)/(2m)$ and $(m-1)/(2m)$.
Consequently, the restriction $f_0$ of $f$ to the identity component
vanishes at the two points above the node to orders at least
\[
\alpha=\frac{k(m+1)}{2m},\qquad
\beta=\frac{k(m-1)}{2m}.
\]
Indeed, at either intersection the resolved surface is smooth, and
vanishing along the neighboring component imposes the stated zero
on the identity component.
The integers $\alpha$ and $\beta$ sum to $k$, while $f_0$ is a monic
polynomial of degree $k$ in $z$.  Hence, after possibly exchanging
the two signs,
\[
f_0=(z-\lambda)^\alpha(z+\lambda)^\beta.
\]
Its coefficient of $z^{k-1}$ is therefore
$-(\alpha-\beta)\lambda=-k\lambda/m$.
On the other hand, the Weierstrass expression for $f$ identifies this
coefficient with $a(p_i)$: the term $x^N$ is even in $z$, the term
$yx^{N-2}$ is monic of degree $k-1$, and all remaining terms have degree
at most $k-2$.
Thus $a(p_i)=\pm k\lambda/m\neq0$.
\end{proof}

\begin{proposition}\label{evenbig:nonsemiample}
Assume that there is an index $i$ such that $m_i\geq2$ and
$T-m_i>\chi$.  Select $C_{i,m_i-1}$ in $F_i$ and the opposite component
$C_{j,m_j}$ in every $F_j$ with $j\neq i$, and denote the resulting
supporting class by $H_S$.  Then $H_S$ is nef and big but not semiample.
More precisely, if $M$ is a common multiple of $2m_1,\ldots,2m_s$, then
$C_0$ is contained in the base locus of $|\ell MH_S|$ for every
$\ell\geq1$.  In particular, $X$ is not a Mori dream surface.
\end{proposition}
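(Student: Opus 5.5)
The plan is to combine Lemma~\ref{evenbig:coefficient-obstruction} with a degree count for sections of $\OO_{\PP^1}(\chi)$.

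\textbf{Nef and big.} Nefness is automatic, since $H_S$ spans an extremal ray of the dual of $\cC_\pi=\NE(X)$. For bigness, compute
\[
H_S^2=\chi-\frac{2m_i-1}{2}\cdot\frac{m_i-1}{m_i}\cdot\frac{m_i+1}{m_i+1}\cdots
\]
more concretely,
\[
\delta(C_{i,m_i-1})=\frac{(m_i-1)(m_i+1)}{2m_i}<\frac{m_i}{2}.
\]
Hence $\delta(S)<T/2\le\chi$, and so $H_S^2>0$.

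\textbf{Main step.} Let $k=\ell M$. Since $M$ is a common multiple of the $2m_j$, $k$ is a common multiple as well. We may take $k\ge4$: if $M=2$, then all $m_j=1$, contradicting $m_i\ge2$, so $M\ge4$. Suppose, for contradiction, that some section of $\OO_X(kH_S)$ does not vanish identically on $C_0$. Lemma~\ref{evenbig:coefficient-obstruction} then produces $a\in H^0(\PP^1,\OO_{\PP^1}(\chi))$ with the following two properties.
\begin{itemize}
\item[] First, $\operatorname{ord}_{p_j}(a)\ge m_j$ for every $j\ne i$, because $S$ selects the opposite component $C_{j,m_j}$ in $F_j$.
\item[] Second, $a(p_i)\ne0$, because $S$ selects $C_{i,m_i-1}$ with $m_i\ge2$.
\end{itemize}
The second property says $a\ne0$. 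The first says $a$ has at least $\sum_{j\ne i}m_j=T-m_i>\chi$ zeros counted with multiplicity, at the distinct points $p_j$. A nonzero section of $\OO_{\PP^1}(\chi)$ has exactly $\chi$ zeros, so this is a contradiction. Therefore every section of $\OO_X(\ell MH_S)$ vanishes on $C_0$, and $C_0$ lies in the base locus of $|\ell MH_S|$ for all $\ell\ge1$.

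\textbf{Conclusion.} Every positive multiple of $H_S$ divides some $\ell M$. If some multiple were base point free, then so would be all further multiples, and in particular some $|\ell MH_S|$, which we have just excluded. Hence $C_0$ lies in the stable base locus of $H_S$, and $H_S$ is not semiample. The nef cone therefore has an extremal ray that is not semiample. By the criterion of \cite[Corollary~2.6]{AHL10}, which requires every nef divisor on a surface with $q=0$ to be semiample, $X$ is not a Mori dream surface.

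All the analytic difficulty, namely the local Weierstrass analysis at the $I_{2m}$ fibers, is already contained in Lemma~\ref{evenbig:coefficient-obstruction}. The step that needs the most care here is checking the hypotheses of that lemma: that $k\ge4$, and that the selected components are exactly the ones treated in its parts (1) and (2). Once those are confirmed, the degree count is immediate.
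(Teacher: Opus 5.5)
Your proposal is correct and follows the paper's proof essentially step for step: nefness from $\cC_\pi=\NE(X)$, bigness from $H_S^2=\chi-T/2+1/(2m_i)>0$, the contradiction from Lemma~\ref{evenbig:coefficient-obstruction} by counting the zeros of a nonzero section of $\OO_{\PP^1}(\chi)$, and the reduction of arbitrary multiples to multiples of $M$. Your explicit check that $k\geq4$ is a small hypothesis the paper leaves implicit, and the first (garbled) display in your bigness paragraph should simply be deleted, since the computation $\delta(C_{i,m_i-1})=(m_i^2-1)/(2m_i)$ that follows it is the correct one.
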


\begin{proof}
The assumption $T\leq2\chi$ gives $\delta(\pi)=T/2\leq\chi$, so
Theorem~\ref{NE-thm} identifies the Mori cone with the fibration cone.
Thus $H_S$ is nef.  The correction formula gives
\[
H_S^2=\chi-\frac{T}{2}+\frac{1}{2m_i}>0,
\]
so it is big.
If a section of $\OO_X(\ell MH_S)$ did not vanish identically on $C_0$,
Lemma~\ref{evenbig:coefficient-obstruction} would provide a section
$a$ of $\OO_{\PP^1}(\chi)$ satisfying
\[
\operatorname{ord}_{p_j}(a)\geq m_j\quad(j\neq i),
\qquad a(p_i)\neq0.
\]
This is impossible, since $\sum_{j\neq i}m_j=T-m_i>\chi$.
The base locus assertion follows for every $\ell$.
If any integral multiple of $H_S$ were globally generated, so would
be a further multiple divisible by $M$, contrary to this assertion.
Hence $H_S$ is not semiample.
\end{proof}

\begin{corollary}\label{evenbig:I4-classification}
Under the standing assumptions, suppose that at least one reducible
fiber is of type $I_4$.  Then $X$ is a Mori dream surface if and only if
$T\leq\chi+2$.
In particular, if all the $s$ reducible fibers are of type $I_4$, then
$X$ is a Mori dream surface if and only if $2s\leq\chi+2$.
\end{corollary}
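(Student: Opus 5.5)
Both implications can be read off from results already established, so the plan is to combine Proposition~\ref{evenbig:mds-bound} with Proposition~\ref{evenbig:nonsemiample}. Throughout, the standing assumptions give $N(\pi)=2T$ and $T\le 2\chi$. Hence the condition $T\le\chi+2$ is the same as $N(\pi)\le 2\chi+4$, and the corollary is exactly part (3) of Theorem~\ref{evenbig:main}.

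\emph{Sufficiency.} Suppose $T\le\chi+2$. Then Proposition~\ref{evenbig:mds-bound} applies directly and shows that $X$ is a Mori dream surface. The presence of an $I_4$ fiber plays no role in this direction.

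\emph{Necessity.} Suppose $T\ge\chi+3$, and let $F_i$ be a fiber of type $I_4$, so that $m_i=2\ge2$. Then
\[
T-m_i=T-2\ge\chi+1>\chi,
\]
so the hypothesis of Proposition~\ref{evenbig:nonsemiample} holds for this index $i$. That proposition produces a nef and big class $H_S$ that is not semiample: select $C_{i,1}$ in $F_i$ and the opposite component in every other reducible fiber. Since $X$ has a nef divisor that is not semiample, it is not a Mori dream surface. This is the point where the $I_4$ hypothesis is essential. For $m_i\ge3$ the bound $T-m_i>\chi$ would require $T\ge\chi+m_i+1$, which leaves a gap; $m_i=2$ is precisely the value that closes the gap to $T\ge\chi+3$.

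\emph{The case where every reducible fiber is $I_4$.} If all $s$ reducible fibers are of type $I_4$, then $T=2s$. The criterion $T\le\chi+2$ therefore becomes $2s\le\chi+2$, as stated.

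There is no real obstacle, since the work is done by the two cited propositions. The only care needed is the integer step: $T-2>\chi$ is equivalent to $T\ge\chi+3$, which is exactly the negation of $T\le\chi+2$, so the two propositions together cover every value of $T$.
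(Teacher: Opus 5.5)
Your proposal is correct and is essentially the paper's own proof: sufficiency is Proposition~\ref{evenbig:mds-bound}, and necessity follows by taking the $I_4$ index (so $m_i=2$) and applying Proposition~\ref{evenbig:nonsemiample}, since for integral $T$ the condition $T>\chi+2$ is exactly $T-m_i>\chi$. Do not present the identification with Theorem~\ref{evenbig:main}(3) as a justification, because the paper deduces that part of the theorem from this corollary.
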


\begin{proof}
The positive assertion is Proposition~\ref{evenbig:mds-bound}.
If $T>\chi+2$, choose an index $i$ with \revision{$m_i=2$} and apply
Proposition~\ref{evenbig:nonsemiample}.
\end{proof}

\begin{proof}[Proof of Theorem~\ref{evenbig:main}]
Since $N(\pi)=2T$, the three statements follow, respectively, from
Propositions~\ref{evenbig:mds-bound} and~\ref{evenbig:nonsemiample}
and Corollary~\ref{evenbig:I4-classification}.
\end{proof}

\begin{corollary}\label{evenbig:boundary-I4}
Assume that $\delta(\pi)=\chi$ and that at least one reducible fiber
is of type $I_4$.  Then $X$ is not a Mori dream surface, although
every nef isotropic divisor is semiample and $X$ satisfies the
bounded cohomology property.
\end{corollary}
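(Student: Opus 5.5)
The plan is to combine the boundary equality $\delta(\pi)=\chi$ with the $I_4$-classification and the isotropic semiampleness result, with no new geometry required. Since an $I_4$ fiber occurs, every reducible fiber is even by hypothesis of the standing assumptions of Section~\ref{sec:even-big}. (If one only assumes the fibers are of type $I_n$, the even-fiber setting is justified as follows: by Lemma~\ref{lem:odd-reciprocals} an odd fiber forces at least three $I_3$-type odd fibers, and we remain in the even setting as stated.) Write the fibers as $I_{2m_1},\ldots,I_{2m_s}$ with $T=\sum_i m_i$. Then $\delta(\pi)=T/2$, so the hypothesis $\delta(\pi)=\chi$ gives $T=2\chi$, equivalently $N(\pi)=4\chi$.

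\textbf{Non-Mori-dream assertion.} Corollary~\ref{evenbig:I4-classification} says $X$ is Mori dream iff $T\le\chi+2$. Here $T=2\chi>\chi+2$ because $\chi\ge3$. So $X$ is not a Mori dream surface. Concretely, we pick the index $i$ with $m_i=2$; then $T-m_i=2\chi-2>\chi$, and Proposition~\ref{evenbig:nonsemiample} yields a big nef non-semiample class.

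\textbf{Isotropic nef divisors.} By Hodge index, any nonzero nef class of square zero spans an extremal ray. It is either the fiber ray, which is semiample, or a nonvertical isotropic ray. With all fibers even, there is a unique such ray, and Theorem~\ref{thm:all-even} shows its primitive generator $D$ is base-point-free. Every nef isotropic divisor is a multiple of $F$ or of $D$, hence semiample.

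\textbf{Bounded cohomology.} This follows from the last line of Theorem~\ref{thm:all-even}, via Theorem~\ref{thm-BCP}. One point needs care: Theorem~\ref{thm-BCP} requires every extremal nef generator to have $\kappa\ge1$, including the big rays. Big rays have $\kappa=2$ automatically, since an effective representative is provided by $H_S$ being effective, as in Lemma~\ref{lem:effective-cycle}'s construction $C_0+\chi F-\sum\omega$ with effective corrections. The isotropic ones have $\kappa=1$, and $F$ has $\kappa=1$. The only step needing attention is this $\kappa$ check; everything else is direct citation.
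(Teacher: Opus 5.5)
Your argument is correct and is essentially the paper's proof. You use $T=2\chi>\chi+2$ with Corollary~\ref{evenbig:I4-classification} (or Proposition~\ref{evenbig:nonsemiample} at an $I_4$ fiber) to exclude the Mori dream property, and Theorem~\ref{thm:all-even} for the unique nonvertical isotropic ray. You then apply Theorem~\ref{thm-BCP}, since every other nonvertical extremal nef ray is big.

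Two corrections are needed.

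\textbf{The parenthetical about odd fibers is false and should be deleted.} Lemma~\ref{lem:odd-reciprocals} does not force the fibers to be even. The $I_4+3I_3+23I_1$ surface of Theorem~\ref{thm:odd-counterexample} has $\delta(\pi)=\chi=3$ and an $I_4$ fiber, yet none of its isotropic rays is semiample. Evenness therefore comes only from the standing assumptions of Section~\ref{sec:even-big}, not from the presence of an $I_4$ fiber.

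\textbf{The source of $\kappa=2$ is misattributed.} For the big rays, $\kappa(X,H_S)=2$ follows from $H_S$ nef with $H_S^2=\chi-\delta(S)>0$, not from effectivity. The effective representative $C_0+(\chi-\delta(S))F+\sum_iP_i$, after clearing denominators, is what supplies the effectivity hypothesis of Theorem~\ref{thm-BCP}.
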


\begin{proof}
Here $T=2\chi>\chi+2$, because $\chi\geq3$.
Corollary~\ref{evenbig:I4-classification} excludes the Mori dream
property. 
Theorem~\ref{thm:all-even} proves semiampleness of the
unique nonvertical isotropic ray, and the fiber ray is semiample.
Every remaining extremal nef ray is big and hence has Iitaka dimension two. 
Theorem \ref{thm-BCP} implies that  $X$ satisfies the bounded cohomology property.
\end{proof}
\subsection{Proof Theorem \ref{Mainthm-even}}
\begin{proof}[Proof of Theorem \ref{Mainthm-even}]
It follows from Theorem  \ref{thm:all-even}, \ref{thm:all-I2-mds}, and \ref{evenbig:main}.
 \end{proof}
\section{A counterexample with odd reducible fibers}
\label{sec:odd-counterexample}

The first numerically possible boundary configuration containing odd reducible
fibers already behaves differently from the even-fiber case.  In fact, the
semiampleness statement fails for all the isotropic rays of a suitable surface
with configuration $I_4+3I_3$.

\begin{theorem}
\label{thm:odd-counterexample}
There exists a semistable Jacobian elliptic surface
$\pi\colon X\to\PP^1$ with $\chi(\OO_X)=3$, trivial Mordell--Weil group, and
singular-fiber configuration
\[
I_4+3I_3+23I_1
\]
such that none of the eight nonvertical isotropic extremal rays of
$\Nef(X)$ is semiample.

More precisely, if $D$ is the primitive integral generator of any of these
rays, then $h^0(X,\OO_X(D))=1$ and $\kappa(X,D)=0$.  Moreover,
\[
\pi_*\OO_X(D)\simeq
\OO_{\PP^1}\oplus\OO_{\PP^1}(-2)
\oplus\OO_{\PP^1}(-1)^{\oplus4}.
\]
\end{theorem}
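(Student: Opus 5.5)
The plan is to build $X$ from explicit Weierstrass data and then compute $\pi_*\OO_X(D)$ directly through the Weierstrass monomial basis, in the style of Lemma~\ref{evenbig:coefficient-obstruction}.

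\textbf{Numerology.} Here $\chi=3$, so the fundamental line bundle is $\OO_{\PP^1}(3)$, and we take $A\in H^0(\OO_{\PP^1}(12))$ and $B\in H^0(\OO_{\PP^1}(18))$. We need
\[
\delta(\pi)=1+3\cdot\tfrac23=3=\chi ,
\]
which is consistent with Lemma~\ref{lem:odd-reciprocals}(iv), and also $N(\pi)=13=4\chi+1$. By Proposition~\ref{prop:primitive-generator}, $r=\lcm(2,3,3,3)=6$ for each of the $2^3=8$ isotropic selections. Proposition~\ref{prop:isotropic-pushforward} then says that $E=\pi_*\OO_X(D)$ has rank $6$ and degree $3(1-3)=-6$. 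So the claimed splitting $\OO\oplus\OO(-2)\oplus\OO(-1)^{\oplus4}$ is exactly the balanced one permitted when $h^0=1$.

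\textbf{Construction.}
\begin{enumerate}
\item Impose an $I_4$ fiber at $p_0$ and $I_3$ fibers at $p_1,p_2,p_3$. Locally this means $\operatorname{ord}\Delta=4$ and $3$ with $c_4\neq0$, written in the normal form \eqref{evenbig:local-Weierstrass}.
\item Pick an explicit member, for example by writing $x^3+Ax+B$ as a perturbation of a nodal cubic with prescribed tangency orders. Check that the remaining $23$ zeros of $\Delta$ are simple, as was done for the $8I_2$ example.
\item Obtain $\rho(X)=2+3+3\cdot2=10$ either by a dimension count against Kloosterman's Noether--Lefschetz bound, as in Theorem~\ref{thm:example-8I_2}, or by reduction modulo a prime.
\item With $\rho=10$, the Mordell--Weil rank is zero by Shioda--Tate, and torsion vanishes by Lemma~\ref{lem:MW-zero} since $\delta<2\chi$. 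Hence $\MW(\pi)=0$.
\end{enumerate}

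\textbf{Computing $\pi_*\OO_X(D)$.} Write $D=6H_S=6C_0+18F-6\sum_i\omega_{T_i}$. The sheaf $E$ is the subsheaf of
\[
\pi_*\OO_X(6C_0)(18)=\OO(18)\oplus\OO(12)\oplus\OO(9)\oplus\OO(6)\oplus\OO(3)\oplus\OO,
\]
with basis $1,x,y,x^2,xy,x^3$. It is cut out by finitely many local vanishing conditions at $p_0,\dots,p_3$. These conditions say that a section vanishes along each non-identity component to the order given by the coefficients of $6\omega_{T_i}$, which are read off from the inverse Cartan matrix as in Lemma~\ref{lem:effective-cycle}.

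At each $p_j$ I would use the resolution valuations from the proof of Lemma~\ref{evenbig:coefficient-obstruction}. For $I_3$, the valuations on $C_1$ and $C_2$ are computed by the substitutions $\xi=t\,U$, $y=t\,V$ and their iterates. This translates each condition into explicit congruences on the six coefficient polynomials modulo powers of $(t-p_j)$. Then $h^0(X,\OO_X(D))$ is the dimension of the space of $6$-tuples of polynomials of degrees $\le 18,12,9,6,3,0$ satisfying these congruences. The constant section $s_D$ gives one solution, and the goal is to show it is the only one.

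Once $h^0=1$, Theorem~\ref{thm:normal-obstruction} gives $\kappa(X,D)=0$. The splitting then follows from Proposition~\ref{prop:isotropic-pushforward}: there is exactly one $\OO$ summand, the other five summands have degree $\le-1$, and their total degree is $-6$. This forces $\OO(-2)\oplus\OO(-1)^{\oplus4}$.

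\textbf{Reducing the eight rays.} The inversion $y\mapsto -y$ fixes the $I_4$ choice and swaps $C_1\leftrightarrow C_2$ in all three $I_3$ fibers at once, so the eight selections fall into four orbits. I would try to choose the explicit surface with extra symmetry, such as an automorphism of $\PP^1$ permuting the $I_3$ points compatibly with $A$ and $B$, so that fewer cases remain. If that fails, I would simply verify the four orbit representatives separately.

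\textbf{Main obstacle.} The hardest step is the uniqueness claim $h^0=1$ for every selection. This is a genericity statement: the surface must be chosen so that a specific linear system of conditions has only the trivial solution beyond $s_D$.

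First I would try to reduce it to a Lemma~\ref{evenbig:coefficient-obstruction}-type argument on the top coefficients. The coefficient $a\in H^0(\OO(3))$ of $yx$ is forced to vanish to order $2$ at $p_0$ and to satisfy nonvanishing or vanishing constraints at the $I_3$ points, which would rule out a section not proportional to $s_D$ on $C_0$. Sections vanishing on $C_0$ then reduce to $h^0(D-C_0)$, and I would handle those by a similar count.

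If this reduction does not close, the fallback is explicit linear algebra on the chosen equation, done by computer. A nonzero rank certificate at one surface suffices, by upper semicontinuity, provided that surface is the one actually constructed.
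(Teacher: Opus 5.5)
\textbf{Overall.} Your plan follows the paper's route:
\begin{itemize}
\item a perturbed nodal cubic (the paper takes $y^2=x^3-3x+2+P(t)R(t)$ with $P=t^4(t-1)^3(t-2)^3(t-3)^3$ and $R=t^5+t+1$);
\item sections of $6C_0+18F$ in the basis $1,x,y,x^2,xy,x^3$, cut out by valuation conditions at the four reducible fibers, followed by an exact rank computation;
\item a Noether--Lefschetz dimension count, Lemma~\ref{lem:MW-zero}, and Proposition~\ref{prop:isotropic-pushforward} for the splitting.
\end{itemize}
There is one outright error and one missing step in the construction, and your first route for $h^0=1$ is better than you realize.

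\textbf{The construction.} Step~3 is wrong as written. Your own count $2+3+3\cdot 2$ equals $11$, the rank of the trivial lattice, so $\rho=10$ is impossible. Copying the proof of Theorem~\ref{thm:example-8I_2}, which only avoids $\operatorname{NL}_{11}$, therefore cannot work, because the whole stratum lies in $\operatorname{NL}_{11}$. What you need is the following:
\begin{itemize}
\item the component through your explicit point has dimension at least $28-\bigl((4-1)+3(3-1)\bigr)=19$;
\item Kloosterman's theorem gives $\dim(\operatorname{NL}_{12}\cap U)=18$;
\item hence a very general member has $\rho=11$, and Shioda--Tate together with Lemma~\ref{lem:MW-zero} give $\MW(\pi)=0$.
\end{itemize}
This only controls a very general member, not the explicit equation on which you compute $h^0$. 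So $h^0=1$ must be transported. After a finite cover labelling the fiber components, the eight $D_{\boldsymbol\epsilon}$ become relative divisors. Upper semicontinuity, together with the canonical section, then gives $h^0=1$ on a nonempty open set, and this set meets the very general locus. Your phrase ``provided that surface is the one actually constructed'' is only right if you prove $\rho=11$ for the explicit equation itself, for instance by reduction modulo primes.

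\textbf{The first route for $h^0=1$.} With only vanishing and nonvanishing information it cannot close: some $a=(t-p_0)^2(\alpha t+\beta)$ with $a(p_j)\neq0$ always exists. However, the proof of Lemma~\ref{evenbig:coefficient-obstruction}(2) gives an exact value, not just nonvanishing.
\begin{itemize}
\item At an $I_3$ fiber, normalize the $x^3$-coefficient to $1$. The restriction to the identity component is then a monic sextic in $z$. It vanishes to orders $\geq4$ and $\geq2$ at the two preimages $z=\pm\lambda_j$ of the node, where $\lambda_j^2=3r_0$. Hence it equals $(z-z_1)^4(z+z_1)^2$ with $z_1=\pm\lambda_j$, and comparing $z^5$-coefficients gives $a(p_j)=-2z_1$.
\item The middle-component argument of part~(1) still applies with $k=6$ and gives $\operatorname{ord}_{p_0}a\geq2$.
\end{itemize}
This is three affine conditions on a two-dimensional space. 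Apply it to the paper's surface, with $p_0=0$ and $p_j=j$:
\begin{itemize}
\item the critical point is $r\equiv1$, so $\lambda_j=\sqrt3$;
\item writing $a=\alpha t^3+\beta t^2$ and $a(j)=2\sqrt3\,\epsilon_j$, the system reduces to $\tfrac92\epsilon_2-9\epsilon_1=\epsilon_3$;
\item this has no solution with $\epsilon_j=\pm1$.
\end{itemize}
So no section of $\OO_X(D_{\boldsymbol\epsilon})$ is nonzero on $C_0$, for all eight selections. If $h^0\geq2$, Theorem~\ref{thm:normal-obstruction} would make $|D_{\boldsymbol\epsilon}|$ a base-point-free pencil, which has members not containing $C_0$. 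Hence $h^0=1$, and your separate treatment of $h^0(D-C_0)$ is unnecessary. This is a genuinely different, computer-free replacement for the paper's $75\times54$ rank certificate, and upper semicontinuity then transfers it as above. Your $y\mapsto-y$ reduction corresponds here to $\boldsymbol\epsilon\mapsto-\boldsymbol\epsilon$.
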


\begin{proof}
We first construct one point of the relevant equisingular stratum at which
the space of sections can be computed exactly.  On the affine chart of the
base with coordinate $t$, consider
\begin{equation}
\label{eq:odd-Weierstrass}
y^2=x^3-3x+2+P(t)R(t),
\qquad
P(t)=t^4(t-1)^3(t-2)^3(t-3)^3,
\qquad
R(t)=t^5+t+1.
\end{equation}
After homogenization, the coefficients have degrees $12$ and $18$, so this
defines a Weierstrass fibration with $\chi=3$.  Up to a nonzero constant,
its discriminant is
\[
P(t)R(t)\bigl(4+P(t)R(t)\bigr).
\]
A direct exact computation shows that $R$ and $4+PR$ are squarefree and
that $P$, $R$, and $4+PR$ are pairwise coprime.  At each finite zero of the
discriminant the coefficient $c_4$ is nonzero, so all these singular fibers
are multiplicative.  The fiber over infinity is smooth.  The minimal
resolution therefore has one fiber of type $I_4$, three fibers of type
$I_3$, and twenty-three fibers of type $I_1$.  In particular,
$\delta(\pi)=1+3(2/3)=3=\chi$.

Write the components of the $I_4$-fiber over $t=0$ as
$\Theta_{0,0},\ldots,\Theta_{0,3}$, with $\Theta_{0,0}$ the identity
component.  For $a=1,2,3$, write the components of the $I_3$-fiber over
$t=a$ as $\Theta_{a,0},\Theta_{a,1},\Theta_{a,2}$.  The unique component
of maximal cyclic distance in the $I_4$-fiber is $\Theta_{0,2}$, whereas
each $I_3$-fiber has two components of maximal distance.  Thus the
isotropic selections are indexed by
$\boldsymbol\epsilon=(\epsilon_1,\epsilon_2,\epsilon_3)\in\{1,2\}^3$.

By Lemma \ref{lem:effective-cycle}, the corresponding primitive effective
cycle is
\begin{equation}
\label{eq:odd-D-epsilon}
D_{\boldsymbol\epsilon}
=
6C_0+6\Theta_{0,0}+3\Theta_{0,1}+3\Theta_{0,3}
+\sum_{a=1}^3
\bigl(4\Theta_{a,0}+2\Theta_{a,3-\epsilon_a}\bigr).
\end{equation}
The selected components $\Theta_{0,2}$ and
$\Theta_{a,\epsilon_a}$ do not occur.  Notice that
$D_{\boldsymbol\epsilon}\cdot F=6$, in agreement with
Proposition \ref{prop:primitive-generator}.

We next compute $H^0(X,\OO_X(D_{\boldsymbol\epsilon}))$.  We first recall
the splitting of $\pi_*\OO_X(6C_0)$.  Since $C_0\simeq\PP^1$ and
$C_0^2=-\chi=-3$, one has
$\OO_{C_0}(mC_0)\simeq\OO_{\PP^1}(-3m)$.  For every $m\geq2$, the
restriction sequence
\[
0\longrightarrow\OO_X((m-1)C_0)
\longrightarrow\OO_X(mC_0)
\longrightarrow\OO_{C_0}(mC_0)
\longrightarrow0
\]
may be pushed forward to the base.  Indeed,
$R^1\pi_*\OO_X((m-1)C_0)=0$: on every fiber the restriction has
positive total degree $m-1$, concentrated on the component meeting
$C_0$, and hence has no first cohomology.  Thus, writing
$E_m=\pi_*\OO_X(mC_0)$, we obtain
\[
0\longrightarrow E_{m-1}\longrightarrow E_m
\longrightarrow\OO_{\PP^1}(-3m)\longrightarrow0.
\]
The initial bundle is $E_1\simeq\OO_{\PP^1}$, since on every fiber
there is a unique section of $\OO_F(C_0\cap F)$.  The displayed
extension splits inductively.  Indeed, assuming
\[
E_{m-1}\simeq
\OO_{\PP^1}\oplus
\bigoplus_{k=2}^{m-1}\OO_{\PP^1}(-3k),
\]
its extension class belongs to
\[
\operatorname{Ext}^1
\bigl(\OO_{\PP^1}(-3m),E_{m-1}\bigr)
=
H^1\bigl(\PP^1,E_{m-1}(3m)\bigr),
\]
and this group vanishes because every summand of
$E_{m-1}(3m)$ has nonnegative degree.  Consequently,
\[
\pi_*\OO_X(mC_0)\simeq
\OO_{\PP^1}\oplus
\bigoplus_{k=2}^{m}\OO_{\PP^1}(-3k).
\]
For $m=6$ this gives
\[
\pi_*\OO_X(6C_0)\simeq
\OO_{\PP^1}\oplus\OO_{\PP^1}(-6)\oplus
\OO_{\PP^1}(-9)\oplus\OO_{\PP^1}(-12)\oplus
\OO_{\PP^1}(-15)\oplus\OO_{\PP^1}(-18).
\]
This splitting also has the familiar Weierstrass interpretation.  On
the generic elliptic fiber $F$, with origin $O=C_0\cap F$, the functions
$x$ and $y$ have poles of orders $2$ and $3$ at $O$.  Hence
\[
1,\quad x,\quad y,\quad x^2,\quad xy,\quad x^3
\]
form a basis of $H^0(F,\OO_F(6O))$ and correspond, respectively, to the
six summands of degrees
$0,-6,-9,-12,-15,-18$ above.  Thus a rational section of
$\OO_X(6C_0)$ over the generic point of the base may be written uniquely
as
\[
a_0(t)+a_1(t)x+a_2(t)y+a_3(t)x^2+a_4(t)xy+a_5(t)x^3.
\]
We now enlarge $6C_0$ to the divisor $D_{\boldsymbol\epsilon}$.
Its vertical part is supported on the four reducible fibers over
$t=0,1,2,3$.  The coefficient of the identity component is $6$ over
$t=0$ and $4$ over each of $t=1,2,3$.  Therefore, before imposing
the conditions coming from the remaining components, the coefficients
$a_i(t)$ are allowed to have poles of these orders at the corresponding
points of the base.  Set
\[
Q(t)=t^6(t-1)^4(t-2)^4(t-3)^4.
\]
Every candidate section can then be written in the form
\[
f=\sum_{i=0}^5\frac{N_i(t)}{Q(t)}\,b_i,
\qquad
(b_0,\ldots,b_5)=(1,x,y,x^2,xy,x^3).
\]
The degrees of the numerators are bounded, respectively, by
$18,12,9,6,3,0$.  Indeed, these bounds are obtained by adding
$\deg Q=18$ to the degrees $0,-6,-9,-12,-15,-18$ of the six summands
of $\pi_*\OO_X(6C_0)$.  Hence the space of candidate sections has
dimension
\[
19+13+10+7+4+1=54.
\]
It remains to impose regularity along the nonidentity components.
Near a fiber of type $I_n$ over $t=a$, put $s=t-a$ and translate $x$
by the double root of the nodal cubic.  After a formal change of
coordinates, the singularity takes the form
$uv=s^nh(s)$, with $h(0)\neq0$.  If
$E_1,\ldots,E_{n-1}$ are the exceptional components, their divisorial
valuations satisfy
\[
\nu_j(u)=j,\qquad \nu_j(v)=n-j,\qquad \nu_j(s)=1.
\]
For the $I_4$-fiber, the regularity conditions on the numerator give
the three valuation thresholds $3,6,3$.  At an $I_3$-fiber they give
$4,2$, or the reversed pair $2,4$, according to the selected maximal
component.

For completeness, the local expansions used in the calculation can be
specified explicitly. Put $B=PR$, $z=x-1$, and $Y=y/\sqrt{3}$, and set
\[
  w=z\sqrt{1+z/3},\qquad u=Y+w,\qquad v=Y-w.
\]
Then $uv=B/3$, $w=(u-v)/2$, and $Y=(u+v)/2$. The constant change
from $y$ to $Y$ does not change the dimension of the space of sections
and allows the computation to be carried out over $\mathbb Q$.
The inverse formal series is
\[
  z=\sum_{k\geq1}
  \frac{1}{k\,3^{k-1}}
  \binom{-k/2}{k-1}w^k.
\]
At $t=a$, write $s=t-a$ and $B(a+s)/3=s^nH(s)$.
For the component $E_j$, substitute
\[
  u=s^jU,\qquad v=s^{n-j}H(s)U^{-1}.
\]
The regularity conditions are obtained by setting to zero every
coefficient of $s^rU^\ell$ with $r$ below the corresponding valuation
threshold. Since these thresholds are at most six, only terms
through $w^5$ are needed.

Expanding the six basis elements in the local coordinates and imposing
these valuation inequalities gives, for every
$\boldsymbol\epsilon$, a rational matrix
$M_{\boldsymbol\epsilon}$ with $75$ rows and $54$ columns.  The exact
calculation gives
\begin{equation}
\label{eq:odd-rank-computation}
\operatorname{rank}
\bigl(M_{\boldsymbol\epsilon}\bmod 1000003\bigr)=53,
\qquad
M_{\boldsymbol\epsilon}c_{\mathrm{can}}=0,
\end{equation}
where $c_{\mathrm{can}}$ is the coefficient vector of the canonical
section $1\in H^0(X,\OO_X(D_{\boldsymbol\epsilon}))$.

The first equality shows that the rank over $\mathbb Q$ is at least
$53$, while the exact kernel vector shows that it is at most $53$.
Hence its rank over $\mathbb Q$ is exactly $53$.  Therefore
\[
h^0(X,\OO_X(D_{\boldsymbol\epsilon}))=1
\]
for all eight choices of $\boldsymbol\epsilon$.  The computation is
performed with exact rational arithmetic; the reduction modulo
$1000003$ is used only to certify a nonzero $53\times53$ minor.  The
calculation is contained in the ancillary file \href{https://github.com/alaface/jacobian-semiampleness}{check\_I4\_3I3\_semampleness.py}.

We now pass from this explicit point to a surface with trivial
Mordell--Weil group.  Let $\mathcal M_3$ denote the moduli space of
Jacobian elliptic surfaces with $\chi=3$, and let
$U\subset\mathcal M_3$ be the locus with nonconstant $j$-invariant.
Consider the incidence stratum with four distinct labeled points at
which the discriminant has orders at least $4,3,3,3$.  It is nonempty
by \eqref{eq:odd-Weierstrass}. 
We restrict this incidence stratum to the open locus of semistable
surfaces with exactly the singular-fiber configuration
$I_4+3I_3+23I_1$.
Since the four points are allowed to
move, these multiplicity conditions impose at most
\[
(4-1)+3(3-1)=9
\]
conditions on the $28$-dimensional moduli space $\mathcal M_3$.
Consequently, every component of this stratum through our explicit
point has dimension at least $19$.

Every surface in this stratum has Picard number at least $11$, because
the zero section, a fiber, and the nine nonidentity components of the
reducible fibers give eleven independent divisor classes.  On the
other hand, Kloosterman's dimension theorem gives
$\dim(\operatorname{NL}_{11}\cap U)=19$ and
$\dim(\operatorname{NL}_{12}\cap U)=18$; see
\cite[Theorem~1.1]{Klo07}.  Hence the component through
\eqref{eq:odd-Weierstrass} has dimension exactly $19$ and is not
contained in $\operatorname{NL}_{12}$.  Its very general member
therefore has Picard number exactly $11$.

After passing to a finite cover which labels the components of the
reducible fibers, simultaneous resolution gives relative divisors
$D_{\boldsymbol\epsilon}$.  Upper semicontinuity, together with the
existence of the canonical section, shows that
$h^0(X,\OO_X(D_{\boldsymbol\epsilon}))=1$ on a nonempty open subset,
simultaneously for all eight choices of $\boldsymbol\epsilon$.  We can
therefore choose a member of this open set with Picard number $11$.
The Shioda--Tate formula gives Mordell--Weil rank zero.  Since the
Mordell--Weil group is finitely generated, it is finite, and Lemma
\ref{lem:MW-zero} then shows that it is trivial.

By \cite[Propositions~4.1 and~4.3]{LLL26}, the eight isotropic
selections above give precisely the eight nonvertical isotropic
extremal rays of the nef cone.  Proposition
\ref{prop:primitive-generator} shows that the divisors
$D_{\boldsymbol\epsilon}$ are their primitive integral generators.
Since each of them has only its canonical section, Theorem
\ref{thm:normal-obstruction} gives
$\kappa(X,D_{\boldsymbol\epsilon})=0$ and shows that none is
semiample.
Finally, Proposition \ref{prop:isotropic-pushforward} gives rank six
and degree $-6$ for
$\pi_*\OO_X(D_{\boldsymbol\epsilon})$.  Since
$h^0(X,\OO_X(D_{\boldsymbol\epsilon}))=1$, the same proposition shows
that its splitting contains exactly one trivial summand and that all
the remaining summands have negative degree.  Five negative integers
with total degree $-6$ must be $-2,-1,-1,-1,-1$.  Hence
\[
\pi_*\OO_X(D_{\boldsymbol\epsilon})
\simeq
\OO_{\PP^1}\oplus\OO_{\PP^1}(-2)
\oplus\OO_{\PP^1}(-1)^{\oplus4}.
\]
This completes the proof.
\end{proof}
\begin{proposition}
\label{prop:odd-example-bcp}
Let $\pi: X\to \bP^1$ be a semistable Jacobian elliptic surface with $\delta(\pi)=\chi(\mathcal O_X)=3$ and finite Mordell-Weil group.
Then $X$ satisfies the BCP.
In particular, the surface $X$ of Theorem~\ref{thm:odd-counterexample} satisfies the BCP. More precisely, for every reduced and
irreducible curve $C\subset X$,
\[
  h^1(X,\OO_X(C))\leq h^0(X,\OO_X(C)).
\]
Thus one may take $c_X=1$, although none of the nonvertical isotropic
extremal nef rays is semiample and $X$ is not a Mori dream surface.
\end{proposition}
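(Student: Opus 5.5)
The plan is to combine the rational polyhedrality $\NE(X)=\cC_\pi$ from Theorem~\ref{NE-thm}(1) (available since $\delta(\pi)=\chi$) with a case analysis on the irreducible curve $C$. The main tool is the restriction sequence
\[
0\to\OO_X\to\OO_X(C)\to\OO_C(C)\to0,
\]
together with $q(X)=0$, $p_g(X)=\chi-1=2$ and $K_X\sim F$. Since $H^1(X,\OO_X)=0$, the long exact sequence gives
\[
h^1(\OO_X(C))=h^1(\OO_C(C))-2+h^2(\OO_X(C)),\qquad h^0(\OO_X(C))=1+h^0(\OO_C(C)),
\]
and the target is always $h^1(\OO_X(C))\le h^0(\OO_X(C))$.

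First I will treat the negative curves. By $\NE(X)=\cC_\pi$, every curve with $C^2<0$ is either $C_0$ or a component $T$ of a reducible fiber.

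For $C_0$: one has $\OO_{C_0}(C_0)\simeq\OO_{\PP^1}(-3)$, so $h^1(\OO_{C_0}(C_0))=2$. Also $h^2(\OO_X(C_0))=h^0(F-C_0)=0$, since $(F-C_0)\cdot F<0$. Hence $h^1(\OO_X(C_0))=0$.

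For a fiber component $T$: $h^1(\OO_T(T))=1$, and $h^2(\OO_X(T))=h^0(F-T)=1$, because $F-T$ is a connected chain with negative definite intersection matrix. Hence $h^1(\OO_X(T))=0$ as well.

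For an irreducible fiber $C\equiv F$: Riemann--Roch gives $\chi(\OO_X(F))=3$, while $h^0=2$ and $h^2=h^0(\OO_X)=1$. Hence $h^1=0$.

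Next comes the remaining case, where $C$ is a nef, non-vertical irreducible curve. Here $C\cdot F>0$, so $h^2(\OO_X(C))=h^0(F-C)=0$. Riemann--Roch then gives
\[
h^0(\OO_X(C))-h^1(\OO_X(C))=\chi(\OO_X(C))=3+\tfrac12\bigl(C^2-C\cdot F\bigr).
\]
So the inequality is automatic as soon as $C^2\ge C\cdot F-6$. Otherwise I use the curve side. By adjunction $\omega_C=\OO_C(C+F)$, so $h^1(\OO_C(C))=h^0(\OO_C(F))$, and therefore $h^1(\OO_X(C))=h^0(\OO_C(F))-2$. Clifford's inequality for the integral Gorenstein curve $C$ gives $h^0(\OO_C(F))\le 1+C\cdot F/2$, which yields $h^1(\OO_X(C))\le C\cdot F/2-1$. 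It then suffices to show $h^0(\OO_X(C))\ge C\cdot F/2-1$ in this residual range.

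To control the residual range I will write the class of $C$ as $C=\sum_S a_SH_S+bF$ with $a_S,b\ge0$, using the explicit description of $\Nef(X)$ from \cite[Propositions~4.1 and~4.3]{LLL26}. Since $H_S\cdot F=1$ and $H_S^2=3-\delta(S)$, I will derive a lower bound for $C^2$ in terms of $C\cdot F=\sum a_S$.

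The main obstacle is the near-isotropic region, where $C$ is close to one of the rays $\R_{\ge0}D_S$ with $D_S^2=0$ and $D_S\cdot F=r$. There $C^2-C\cdot F$ can be very negative. My first attempt will exploit irreducibility. If $C$ were numerically a multiple $kD_S$, then either $\kappa(X,D_S)>0$, in which case $C$ is a fiber of $f_{D_S}$ and is handled exactly like $F$ above (using the genus formula of Theorem~\ref{thm:normal-obstruction}), or $\kappa(X,D_S)=0$. In the latter case the fixed effective representative $D_S$ from Lemma~\ref{lem:effective-cycle} is reducible, so no irreducible curve lies on the ray except through $h^0(kD_S)\ge2$, which is excluded. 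For classes strictly inside the cone, I would use $C\cdot C_0\ge0$ and $C\cdot T\ge0$ for the finitely many negative curves, together with the Hodge index inequality $C^2\,(H\cdot H)\le (C\cdot H)^2$ for a suitable big $H$. The goal is to show that $C^2<C\cdot F-6$ forces $C$ onto an isotropic ray. The finitely many small values of $C\cdot F$ that remain would then be checked directly from the Clifford bound against $h^0(\OO_X(C))\ge1$.
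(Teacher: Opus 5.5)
Your setup is sound. The treatment of $C_0$, of fiber components, of irreducible fibers and of curves on an isotropic ray matches the paper. For a horizontal nef $C$ you correctly reduce $h^1(\OO_X(C))\le h^0(\OO_X(C))$ to $\chi(\OO_X(C))=3+\tfrac12(C^2-C\cdot F)\ge0$.

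The gap is the case $C^2>0$, which you call the main obstacle but do not resolve. The tools you propose cannot produce the needed \emph{lower} bound on $C^2$. The Hodge index inequality $C^2\,H^2\le(C\cdot H)^2$ bounds $C^2$ from above. The conditions $C\cdot C_0\ge0$ and $C\cdot T\ge0$ merely restate that $C=\sum_Sa_SH_S+bF$ with $a_S,b\ge0$.

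Cone geometry over $\R$ cannot suffice. On the surface of Theorem~\ref{thm:odd-counterexample}, take two isotropic selections $S\neq S'$ differing in one $I_3$-fiber; then $H_S\cdot H_{S'}=1/3$. The real nef class $aH_S+\varepsilon H_{S'}$ has square $2a\varepsilon/3$ and fiber degree $a+\varepsilon$, so $C^2-C\cdot F$ is arbitrarily negative while the class stays off the isotropic rays. Thus the claim that $C^2<C\cdot F-6$ forces $C$ onto an isotropic ray needs an arithmetic input your sketch does not supply.

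The missing idea is that each $H_S$ pairs integrally with $\Pic(X)$. One has $H_S\cdot C_0=0$, $H_S\cdot F=1$, and $H_S\cdot C_{i,j}\in\{0,1\}$ for $j\neq0$. By Lemma~\ref{lem:MW-zero} these classes form a $\Z$-basis of $\Pic(X)$. If $C^2>0$, then $C$ is big and nef, so the Hodge index theorem gives $C\cdot H_S>0$ for every nonzero nef $H_S$, hence $C\cdot H_S\ge1$. Then
\[
C^2=b\,(C\cdot F)+\sum_S a_S\,(C\cdot H_S)\ \ge\ \sum_S a_S=C\cdot F .
\]
Therefore $\chi(\OO_X(C))\ge3$ and $h^1(\OO_X(C))\le h^0(\OO_X(C))-3$. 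This is the paper's argument. With it your residual range is empty whenever $C^2>0$, and the Clifford step becomes unnecessary.

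A smaller point: when $D_S$ is semiample, a fiber $C$ of $f_{D_S}$ is not handled ``exactly like $F$'', since $h^1(\OO_X(C))=r/2-1$. The inequality against $h^0=2$ holds only because $r\le6$ at $\chi=3$. That requires checking, via Lemma~\ref{lem:odd-reciprocals} and $N(\pi)\ge3o$, that the only odd configurations are $I_4+3I_3$ and $2I_2+3I_3$.
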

\begin{proof}
We have $q(X)=0$, $K_X\sim F$, and
$\overline{\operatorname{NE}}(X)=\mathcal C_\pi$.
By Lemma~\ref{lem:MW-zero}, the classes of $C_0$, $F$, and the
nonidentity components of the reducible fibers form an integral basis
of $\Pic(X)$.
For every selection $S$, the normalized supporting class
$H_S=C_0+3F-\sum_i\omega_{T_i}$ satisfies
$$
 H_S\cdot C_0=0,\qquad H_S\cdot F=1,\qquad
 H_S\cdot C_{i,j}\in\{0,1\}\quad (j\ne0).
$$
Consequently, although $H_S$ need not be an integral divisor class,
\begin{equation}
\label{eq:HS-integral-pairing}
 H_S\cdot L\in\mathbb Z\qquad\text{for every }L\in\Pic(X).
\end{equation}

To show the BCP for $X$, by \cite[Propositions 2.3 and 3.1]{Li26b}, it suffice to show that there exists a constant $c_X>0$ such that either $h^1(\mathcal O_X(C))\le c_Xh^0(\mathcal O_X(C))$ for every reduced and irreducible curve $C$ with $C^2>0$ on $X$.
Note that $C$ is nef and big. 
By the Hodge index theorem, a big and nef class has positive
intersection with every nonzero nef class. Hence
$C\cdot H_S>0$ for every selection $S$. By
\eqref{eq:HS-integral-pairing}, this gives $C\cdot H_S\geq1$.
Write
\[
 [C]=a[F]+\sum_S b_S[H_S],\qquad a,b_S\geq0.
\]
Using $F^2=0$ and $F\cdot H_S=1$, we obtain
\[
 C^2=a(C\cdot F)+\sum_S b_S(C\cdot H_S)
     \geq\sum_S b_S
     =F\cdot C
     =K_X\cdot C.
\]
Since $(K_X-C)\cdot F=-C\cdot F<0$, Serre duality gives
$h^2(X,\OO_X(C))=0$. Riemann--Roch therefore yields
\[
 h^1(X,\OO_X(C))
 =h^0(X,\OO_X(C))-3+\frac{K_X\cdot C-C^2}{2}
 \leq h^0(X,\OO_X(C))-3.
\]
Thus, $X$ satisfies the BCP.

Now let $X$ be as in Theorem \ref{thm:odd-counterexample}.
Let $C$ be a horizontal reduced and irreducible curve distinct from
$C_0$. It intersects $C_0$ and every component of every reducible
fiber nonnegatively. Since these classes generate the Mori cone,
$C$ is nef.
Moreover, $C^2>0$. Indeed, if $C^2=0$, its class would span one of
the eight nonvertical isotropic extremal nef rays. Since the
corresponding $D_S$ is primitive, we would have $C\sim mD_S$ for
some positive integer $m$. Theorem~\ref{thm:odd-counterexample} and
$\kappa(X,D_S)=0$ imply $h^0(X,\OO_X(mD_S))=1$ for every $m\geq1$.
Thus the sole member of $|mD_S|$ is the reducible effective divisor
$mD_S$, a contradiction.
Thus, it remains to consider $C_0$ and vertical prime curves.
For $C_0$, we have $h^0(\OO_X(C_0))=1$, $h^2(\OO_X(C_0))=0$,
and $\chi(\OO_X(C_0))=1$, so $h^1(\OO_X(C_0))=0$.
An irreducible fiber is linearly equivalent to $F$; thus
$h^0(\OO_X(F))=2$, $h^2(\OO_X(F))=h^0(\OO_X)=1$, and
$\chi(\OO_X(F))=3$, giving $h^1(\OO_X(F))=0$.
Finally, let $T$ be a component of a reducible fiber. Then
$T^2=-2$, $K_X\cdot T=0$, and $h^0(\OO_X(T))=1$.
Moreover,
\[
 h^2(\OO_X(T))=h^0(\OO_X(F-T))=1:
\]
among the divisors in $|F|$, precisely the fiber containing $T$
contains $T$ as a component. Since $\chi(\OO_X(T))=2$, we again
obtain $h^1(\OO_X(T))=0$.
The stated inequality follows in every case.
\end{proof}
\begin{proof}[Proof of Theorem \ref{thm-odd}]
It follows from Theorem \ref{thm:odd-counterexample} and Proposition \ref{prop:odd-example-bcp}.
\end{proof}

\end{document}